\colorlet{ColorPink}{red!30}
\numberwithin{equation}{section}
\newtheorem{theorem}{Theorem}[section]
\newtheorem{lemma}[theorem]{Lemma}
\newtheorem{proposition}[theorem]{Proposition}
\newtheorem{corollary}[theorem]{Corollary}
\newtheorem{definition}[theorem]{Definition}
\numberwithin{equation}{section}
\theoremstyle{remark}
\newtheorem{remark}[theorem]{Remark}
\def\Xint#1{\mathchoice
{\XXint\displaystyle\textstyle{#1}}%
{\XXint\textstyle\scriptstyle{#1}}%
{\XXint\scriptstyle\scriptscriptstyle{#1}}%
{\XXint\scriptscriptstyle\scriptscriptstyle{#1}}%
\!\int}
\def\XXint#1#2#3{{\setbox0=\hbox{$#1{#2#3}{\int}$ }
\vcenter{\hbox{$#2#3$ }}\kern-.6\wd0}}
\def\dashint{\Xint-}
\newcommand{\bd}{\operatorname{BD}}
\newcommand{\bv}{\operatorname{BV}}
\newcommand{\ld}{\operatorname{LD}}
\newcommand{\di}{\operatorname{div}}
\newcommand{\dif}{\operatorname{d}\!}
\newcommand{\spt}{\operatorname{spt}}
\newcommand{\R}{\mathbb{R}}
\newcommand{\locc}{\operatorname{loc}}
\renewcommand{\leq}{\leqslant}
\newcommand{\D}{\operatorname{D}\!}
\newcommand{\ball}{\operatorname{B}}
\newcommand{\dista}{\operatorname{dist}}
\newcommand{\sobo}{\operatorname{W}}
\newcommand{\lebe}{\operatorname{L}}
\newcommand{\hold}{\operatorname{C}}
\newcommand{\besov}{\operatorname{B}}
\newcommand{\dist}{\operatorname{dist}}
\newcommand{\E}{\operatorname{E}\!}
\newcommand{\sg}{\bm{\varepsilon}}
\newcommand{\sym}{\operatorname{sym}}
\newcommand{\trace}{\operatorname{Tr}}
\newcommand{\adop}{\bm{\varepsilon}}
\newcommand{\excesso}{\mathbf{U}}
\newcommand{\excenew}{\mathbf{E}}
\newcommand{\exce}{\widetilde{\mathbf{E}}}
\newcommand{\devi}{\operatorname{dev}}
\newcommand{\wstar}{\stackrel{*}{\rightharpoonup}}
\newcommand{\argmin}{\operatorname{argmin}}
\newcommand{\m}{\operatorname{m}}
\newcommand{\res}{\!\mathbin{\vrule height 1.6ex depth 0pt width
0.13ex\vrule height 0.13ex depth 0pt width 1.1ex}\!}
\begin{document}
\title[Partial Regularity for Symmetric--Convex Functionals]{Partial Regularity for Symmetric--Convex Functionals of Linear Growth}
\author{Franz Gmeineder}
\address{University of Oxford \\ Mathematical Institute \\ Andrew Wiles Building }
\date{October 2016}
\maketitle
\hrulefill
\tableofcontents
\hrulefill
\section{Introduction}
Let $\Omega$ be an open and bounded Lipschitz subset of $\R^{n}$ with $n\geq 2$. In the present paper we study the partial regularity of minimisers of autonomous variational problems of the form
\begin{align}\label{eq:varprin}
\text{to minimise}\;\;\mathfrak{F}[u]:=\int_{\Omega}f(\sg(u))\dif x\;\text{over a prescribed Dirichlet class}\;\mathscr{D}. 
\end{align}
Here, $\sg(u)=\frac{1}{2}(\D u +\D^{\mathsf{T}}u)$ is the symmetric part of the gradient $\D u$, and $f\colon\R_{\sym}^{n\times n}\to\R$ is a convex $\hold^{2}$--variational integrand of linear growth, by which we shall understand that there exist two constants $0<c_{1}\leq c_{2}<\infty$ such that 
\begin{align}\label{eq:lingrowth}
c_{1}|\xi|\leq f(\xi)\leq c_{2}(1+|\xi|)\qquad\text{for all}\;\xi\in\R_{\sym}^{n\times n}. 
\end{align}
Since $f$ is a convex function defined on the symmetric matrices only, we shall refer to the functional $\mathfrak{F}$ as \emph{symmetric--convex}. Before we embark on the notion of minimisers and their partial regularity addressed in the main part of the paper, let us comment on the consequences of the growth condition \eqref{eq:lingrowth}. 

For this it is convenient to make a comparison with integrands enjoying better coercivity properties. So assume first that $f\in\hold^{2}(\R_{\sym}^{n\times n};\R_{\geq 0})$ is of $p$--growth with $1<p<\infty$, that is, there exist two constants $c_{1,p},c_{2,p}>0$ such that $c_{1,p}|\xi|\leq f(\xi)\leq c_{2,p}(1+|\xi|^{p})$ holds for all $\xi\in \R_{\sym}^{n\times n}$. Then $\mathfrak{F}$ is well--defined on $\sobo^{1,p}(\Omega;\R^{n})$, and after specifying a suitable Dirichlet class $\mathscr{D}=u_{0}+\sobo_{0}^{1,p}(\Omega;\R^{n})$, the boundedness of minimising sequences with respect to the Sobolev norm follows from \emph{Korn's inequality}. This inequality states that, given a bounded and open Lipschitz subset $\Omega$ of $\R^{n}$, for each $1<p<\infty$ there exists a constant $c=c(\Omega,p)>0$ such that 
\begin{align}\label{eq:korn}
\|u\|_{\sobo^{1,p}(\Omega;\R^{n\times n})}\leq c(\|u\|_{\lebe^{p}(\Omega;V)}+\|\sg(u)\|_{\lebe^{p}(\Omega;\R_{\sym}^{n\times n})})
\end{align} 
holds for all $u\in\sobo^{1,p}(\Omega;\R^{n})$. As a consequence, $\mathfrak{F}$--minimising sequences $(u_{k})$ are bounded in the reflexive Sobolev space $\sobo^{1,p}(\Omega;\R^{n})$ and thus, by the Banach--Alaoglu theorem, allow to extract subsequences converging weakly in $\sobo^{1,p}(\Omega;\R^{n})$. Since it easily can be shown that symmetric--convexity suffices to deduce lower semicontinuity of $\mathfrak{F}$  for weak convergence in $\sobo^{1,p}$, the weak limit is proved to be a minimiser of $\mathfrak{F}$. 

The deeper reason for a Korn--type inequality being available is that, given $\D$ is an elliptic differential operator, the mapping $\Phi\colon \hold_{c}(\R^{n};\R^{n\times n})\to \hold_{c}(\R^{n};\R_{\sym}^{n\times n})$ given by $\Phi\colon \sg(u)\mapsto \D u$ is a singular integral of convolution type. This point will be addressed in detail in section 2.1 below, but for the time being we confine to the well--established fact that singular integrals of convolution type are bounded as mappings $\lebe^{p}\to\lebe^{p}$ if and only if $1<p<\infty$. In the above setting of open Lipschitz subsets $\Omega$ of $\R^{n}$, the statement can be localised to yield the above version of Korn's inequality. However, if $p=1$, then it is well--known that singular integrals merely satisfy weak--(1,1)--type estimates, and this cannot be improved to obtain strong--(1,1)--type estimates. The unboundedness of singular integrals on $\lebe^{1}$ in turn implies that a Korn--type inequality \eqref{eq:korn} cannot hold for $p=1$. This negative result is often referred to as \emph{Ornstein's Non--Inequality}; see \cite{Ornstein} for \textsc{Ornstein}'s original work and \cite{KiKr,CFM,Gm1} for recent developments. 

\subsection{On the Notion of Minima}
As a straightforward implication of Ornstein's Non--Inequality, the space 
\begin{align}
\ld(\Omega):=\big\{v\in\lebe^{1}(\Omega;\R^{n})\colon\;\sg(v)\in\lebe^{1}(\Omega;\R^{n\times n})\big\}, 
\end{align}
where $\sg(v)$ is understood as the weak symmetric gradient of $v$, contains $\sobo^{1,1}(\Omega;\R^{n})$ as a proper subspace and is endowed with norm $\|v\|_{\ld}:=\|v\|_{\lebe^{1}}+\|\sg(v)\|_{\lebe^{1}}$. Since $\ld(\Omega)$ has trace space $\lebe^{1}(\partial\Omega;\R^{n})$ for open and bounded Lipschitz sets $\Omega\subset\R^{n}$, it is reasonable to define the Dirichlet $\mathscr{D}:=u_{0}+\ld_{0}(\Omega)$ for a given element $u_{0}\in\ld(\Omega)$, where $\ld_{0}(\Omega)$ is the $\ld$--norm closure of $\hold_{c}^{1}(\Omega;\R^{n})$. Although minimising sequences $(v_{k})\subset\mathscr{D}$ are then bounded in $\ld(\Omega)$ -- which differs from $\sobo^{1,1}$ by Ornstein's Non--Inequality -- non--reflexivity of the latter space does not allow to conclude weak sequential compactness from boundedness. Hence the variational problem \eqref{eq:varprin} must be lifted to the space $\bd(\Omega)$ of functions of bounded deformation. This space is defined to consist of all $v\in\lebe^{1}(\Omega;\R^{n})$ such that the distributional symmetric gradient can be represented by a $\R_{\sym}^{n\times n}$--valued Radon measure of finite total variation. In consequence, not being defined for measures, $\mathcal{F}$ must be relaxed to the space $\bd(\Omega)$. To do so, we shall consider the weak*--Lebesgue--Serrin extension given by 
\begin{align}
\overline{\mathfrak{F}}[v]:=\inf\left\{\liminf_{k\to\infty}\mathfrak{F}[v_{k}]\colon\;\begin{array}{c} (v_{k})\subset\mathscr{D},\\ v_{k}\stackrel{*}{\rightharpoonup}v\;\text{in}\;\bd(\Omega)\end{array}\right\}\qquad\text{for}\;v\in\bd(\Omega), 
\end{align}
where we say that a sequence $(v_{k})\subset\bd(\Omega)$ converges to $v\in\bd(\Omega)$ in the weak*--sense if and only if $v_{k}\to v$ strongly in $\lebe^{1}(\Omega;\R^{n})$ and $\sg(v_{k})\stackrel{*}{\rightharpoonup} \sg(v)$ in the sense of matrix--valued Radon measures as $k\to\infty$. Adopting this terminology, we make the following
\begin{definition}[$\bd$--Minimisers]
We say that a function $u\in\bd(\Omega)$ is a \emph{$\bd$--minimiser} of $\mathfrak{F}$ provided $\overline{\mathfrak{F}}[u]\leq\overline{\mathfrak{F}}[v]$ for all $v\in\bd(\Omega)$. 
\end{definition}
For a given open Lipschitz subset $\omega$ of $\Omega$, $u\in\bd(\Omega)$ and a function $v\in\bd(\omega)$, put
\begin{align*}
\widetilde{\mathfrak{F}}_{u}[v;\omega] :=\int_{\omega}f(\mathscr{E}v)\dif x + \int_{\omega}f^{\infty}\left(\frac{\dif \E v}{\dif |\E^{s}v|}\right)\dif |\E^{s}v| + \int_{\partial\omega}f^{\infty}(\nu_{\partial\omega}\odot\trace(v-u))\dif\mathcal{H}^{n-1}, 
\end{align*}
where 
\begin{align*}
\E v := \sg(v) & = \E^{ac}u+\E^{s}u \\
& = \mathscr{E}v\mathscr{L}^{n}\res\Omega + \frac{\dif \E v}{\dif |\E^{s}v|}|\E^{s}v|\res\Omega + (\nu_{\partial\Omega}\odot\trace(v))\mathcal{H}^{n-1}\res\partial\omega
\end{align*}
is the Radon--Nikod\v{y}m decomposition of the symmetric gradient measure $\E v$ and the map $f^{\infty}\colon\R_{\sym}^{n\times n}\to\R_{\geq 0}$ is the \emph{recession function} given by 
\begin{align}\label{eq:recession}
f^{\infty}(\xi):=\lim_{t\searrow 0}tf\left(\frac{\xi}{t}\right),\qquad \xi\in\R_{\sym}^{n\times n}. 
\end{align}
It then can be shown that a function $u\in\bd(\Omega)$ is a $\bd$--minimiser if and only if $\widetilde{\mathfrak{F}}_{u_{0}}[u]\leq \widetilde{\mathfrak{F}}_{u_{0}}[v]$ for all $v\in\bd(\Omega)$. Since our main result will be of local nature, we moreover supply the following
\begin{definition}[Local Minimiser]
An element $u\in\bd(\Omega)$ is called a \emph{local $\bd$--minimiser} of $\mathfrak{F}$ if and only if for every open Lipschitz subset $\omega$ of $\Omega$ there holds 
\begin{align}
\widetilde{\mathfrak{F}}_{u}[u;\omega]\leq \widetilde{\mathfrak{F}}_{u}[v;\omega]
\end{align}
for all $v\in\bd(\omega)$. 
\end{definition}
\begin{remark}[$\bd$--minima are local $\bd$--minima] 
Let $u\in\bd(\Omega)$ be a $\bd$--minimiser of $\mathfrak{F}$. For a given Lipschitz subset $\omega$ of $\Omega$ and an arbitrary $v\in\bd(\omega)$, we put $w:=\mathbbm{1}_{\omega}v+\mathbbm{1}_{\Omega\setminus\overline{\omega}}u$. By the classical $\bd$--extension principle, $w\in\bd(\Omega)$. By definition of $\bd$--minima, we have $\mathfrak{F}_{u_{0}}[u]\leq \mathfrak{F}_{u_{0}}[u+ v]$ and thus, working from the definition of $\mathfrak{F}_{u_{0}}$,  
\begin{align*}
\mathfrak{F}_{u_{0}}[u] & =\int_{\omega}f(\mathscr{E}u)\dif x + \int_{\Omega\setminus\overline{\omega}}f(\mathscr{E}u)\dif x + \int_{\omega}f^{\infty}\left(\frac{\dif \E u}{\dif |\E^{s}u|}\right)\dif |\E^{s}u|  \\ & + \int_{\Omega\setminus\overline{\omega}}f^{\infty}\left(\frac{\dif \E u}{\dif |\E^{s}u|}\right)\dif |\E^{s}u| + \int_{\partial\Omega}f^{\infty}(\nu_{\partial\Omega}\odot\trace(u-u_{0}))\dif\mathcal{H}^{n-1} \\ & \leq  \mathfrak{F}_{u_{0}}[w] = \int_{\omega}f(\mathscr{E}v)\dif x + \int_{\Omega\setminus\overline{\omega}}f(\mathscr{E}u)\dif x + \int_{\omega}f^{\infty}\left(\frac{\dif \E v}{\dif |\E^{s}v|}\right)\dif |\E^{s}v|  \\ & + \int_{\Omega\setminus\overline{\omega}}f^{\infty}\left(\frac{\dif \E u}{\dif |\E^{s}u|}\right)\dif |\E^{s}u| + \int_{\partial\Omega}f^{\infty}(\nu_{\partial\Omega}\odot\trace(u-u_{0}))\dif\mathcal{H}^{n-1}\\ & + \int_{\partial\omega}f^{\infty}(\trace(v-u)\odot \nu_{\partial\omega})\dif\mathcal{H}^{n-1}, 
\end{align*} 
where $a\odot b:=\frac{1}{2}(ab^{\mathsf{T}}+ba^{\mathsf{T}})$ for $a,b\in\R^{n}$ is the symmetrised tensor product. Now, the obvious cancellations yield that $u$ is a local $\bd$--minimiser. 
\end{remark}
Usually, within the setup of Sobolev-- or $\ld$--spaces, local minima are defined as elements which, for any ball $\ball\subset\Omega$, minimise $\mathfrak{F}[v;\ball]:=\int_{\Omega}f(\nabla v)\dif x$ with respect to its own boundary values: $\mathfrak{F}[v;\ball]\leq \mathfrak{F}[v+\varphi;\ball]$ for all $\varphi\in\sobo_{0}^{1,1}(\Omega;\R^{N})$. In the $\bd$--setup, however, we have to keep track of the possible non--zero singular part and hence the boundary penalisation term in the relaxed functional. 

As a matter of fact, because the variational problem \eqref{eq:varprin} is genuinely vectorial, we cannot expect generalised minima to be globally of class $\hold^{1,\alpha}$ (see \cite{Beck,Giusti,Mingione} for comprehensive lists of counterexamples to full regularity) unless $f$ is of special structure (so for instance of Uhlenbeck--type, that is, has radial structure: $f=g(|\cdot|^{2})$ for some $g\in\hold^{2}(\R)$). A natural and by now well--established substitute hence is the notion of \emph{partial regularity}. Here, we say that a mapping $v\colon\Omega\to\R^{n}$ is \emph{partially regular} if there exists an open subset $\Omega_{v}$ of $\Omega$ such that $\mathscr{L}^{n}(\Omega\setminus\Omega_{v})=0$ and $v$ is of class $\hold^{1,\alpha}$ in a neighbourhood of any element of $\Omega_{v}$ for any $0<\alpha<1$; consequently, $\Omega_{v}$ is called the \emph{regular set} and the relative complement $\Sigma_{v}$ the \emph{singular set}. To conclude with, the aim of the present paper is to show that under fairly weak ellipticity assumptions imposed on the variational integrand $f$, generalised minima or $\bd$--minima of variational integrals \eqref{eq:varprin} are partially regular, a result to whose precise description we turn now.
\subsection{Main Result \& Organisation of the Article}
We now pass to the description of our main result which, in turn, is a generalisation of Theorem 1.1 in Anzellotti \& Giaquinta's article \cite{AG}. The main feature of this result is that the convexity assumptions is rather weak and does not rely on any uniform bounds on the second derivatives from below. 
\begin{theorem}\label{thm:1}
Let $f\in \hold^{2}(\R_{\sym}^{n\times n};\R_{\geq 0})$ be convex and of linear growth. Suppose that $u\in\bd(\Omega)$ is a local $\bd$--minimiser of $\mathfrak{F}$ given by \eqref{eq:varprin}. If $(x,z)\in\Omega\times\R_{\sym}^{n\times n}$ is such that 
\begin{align*}
\lim_{R\searrow 0}\left[\dashint_{\ball(x,R)}|\mathscr{E}u-z|\dif x + \frac{|\E^{s}u|(\ball(x,R))}{\mathscr{L}^{n}(\ball(x,R))}\right]=0
\end{align*}
and $f''(z)$ is positive definite, then $u\in \hold^{1,\alpha}(U;\R^{n})$ for a suitable neighbourhood $U$ of $x$ for all $0<\alpha<1$. 
\end{theorem}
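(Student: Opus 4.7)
The plan is to follow the Anzellotti--Giaquinta scheme adapted to the $\bd$--setting: establish an excess--decay estimate via linearisation around $z$, then conclude by a Campanato characterisation combined with a Korn--type inequality. For $x_{0}\in\Omega$, $\zeta\in\R_{\sym}^{n\times n}$ and $\ball(x_{0},R)\subset\Omega$, define the excess functional
\begin{align*}
\excenew(x_{0},R,\zeta) := \dashint_{\ball(x_{0},R)}|\mathscr{E}u - \zeta|\dif x + \frac{|\E^{s}u|(\ball(x_{0},R))}{\mathscr{L}^{n}(\ball(x_{0},R))}.
\end{align*}
The hypothesis is precisely $\excenew(x,R,z)\to 0$ as $R\searrow 0$. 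Since $f''$ is continuous and $f''(z)$ is positive definite, there is a neighbourhood of $z$ on which $f''$ satisfies uniform upper and lower Legendre--type bounds; working locally in this neighbourhood is therefore legitimate.

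The first step would be a Caccioppoli inequality of the second kind. For $0<r<R$ and the affine comparison map $\ell(y):=(u)_{x_{0},R}+z(y-x_{0})$ one aims at
\begin{align*}
\dashint_{\ball(x_{0},r)}|\mathscr{E}u - z|\dif x + \frac{|\E^{s}u|(\ball(x_{0},r))}{\mathscr{L}^{n}(\ball(x_{0},r))} \leq \frac{C}{(R-r)}\dashint_{\ball(x_{0},R)}|u-\ell|\dif x.
\end{align*}
The proof would proceed from local minimality applied with the competitor $v:=\ell+\eta(u-\ell)$ for a suitable cut--off $\eta$, decomposing $f(\mathscr{E}u) = f(z) + f'(z)\colon(\mathscr{E}u-z) + Q_{z}(\mathscr{E}u-z)$ with $Q_{z}$ convex, and exploiting the boundary penalisation term in $\widetilde{\mathfrak{F}}_{u}[\,\cdot\,;\ball]$ to absorb the singular part. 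Linear growth of $f^{\infty}$ ensures the singular contributions are comparable on both sides.

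The crucial step is an $\mathscr{A}$--harmonic approximation lemma in $\bd$: given $\varepsilon>0$ there exists $\delta=\delta(\varepsilon,f''(z))>0$ such that, whenever $\excenew(x_{0},R,z)\leq\delta$, one can find $h\in\hold^{\infty}(\ball(x_{0},R/2);\R^{n})$ solving the constant--coefficient linearised system $\di(f''(z)\sg(h))=0$ with
\begin{align*}
\dashint_{\ball(x_{0},R/2)}\frac{|u-h|}{R}\dif x \leq \varepsilon\,\excenew(x_{0},R,z).
\end{align*}
One argues by contradiction, rescaling a hypothetical counterexample sequence so that the excess equals $1$: after rescaling one obtains $w_{k}\in\bd(\ball(0,1))$ with symmetric gradient measures uniformly bounded in total variation, $\bd$--compactness yields a strong $\lebe^{1}$--limit $w_{\infty}$, and the Taylor expansion of $f$ around $z$ combined with minimality forces $w_{\infty}$ to solve the linearised system. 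This is the main obstacle of the proof: after rescaling one must show the normalised singular parts converge to zero in the limit and that no mass is lost; this typically requires a Reshetnyak lower semicontinuity argument, the $\bd$--Poincaré inequality, and an Ekeland--type perturbation to handle the absence of compactness of the original minimiser.

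Once the approximation is established, interior regularity for the constant--coefficient elliptic system (available because $f''(z)$ satisfies the Legendre condition on the symmetric gradient) yields the decay $\dashint_{\ball(x_{0},\theta R)}|\sg(h)-(\sg(h))_{x_{0},\theta R}|\dif x \leq C_{*}\theta\dashint_{\ball(x_{0},R/2)}|\sg(h)-(\sg(h))_{x_{0},R/2}|\dif x$ for $\theta\in(0,\tfrac{1}{2})$. Combined with the Caccioppoli inequality and the harmonic approximation via the triangle inequality, this gives, for suitable $\theta$ and an updated reference matrix $z'$,
\begin{align*}
\excenew(x_{0},\theta R, z') \leq \tfrac{1}{2}\theta^{2\alpha}\,\excenew(x_{0},R,z) + \omega(|z-z'|),
\end{align*}
where $\omega$ denotes the modulus of continuity of $f''$ near $z$. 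Iterating with the linearisation point updated at each scale propagates smallness of the excess to a whole neighbourhood $U$ of $x$, yielding $\excenew(y,r,(\mathscr{E}u)_{y,r})\lesssim r^{\alpha}$ uniformly for $y\in U$. A Campanato--type characterisation then shows $\mathscr{E}u\in\hold^{0,\alpha}(U;\R_{\sym}^{n\times n})$, while the density bound forces $\E^{s}u\res U\equiv 0$. Finally, a Korn--Campanato inequality upgrades the regularity of the symmetric gradient to $u\in\hold^{1,\alpha}(U;\R^{n})$ for every $0<\alpha<1$.
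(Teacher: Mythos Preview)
Your outline follows the $\mathcal{A}$--harmonic approximation route, and you correctly flag its core step as ``the main obstacle''. Unfortunately that obstacle is not resolved in your sketch, and the paper explicitly identifies it as the reason the blow--up/compactness strategy is \emph{not} used here. After normalising by the excess you get $w_{k}\in\bd(\ball(0,1))$ with uniformly bounded total variation of $\sg(w_{k})$; weak* compactness in $\bd$ gives an $\lebe^{1}$--limit, but to show the limit solves the linearised system you must pass to the limit in an expression of the form $\int f''(z)\sg(w_{k}):\sg(\varphi)\,\dif x$ while controlling the Taylor remainder divided by the excess. With only linear growth and a $\bd$--minimiser (no a priori $\ld$--regularity, no $\mu$--ellipticity), you have neither equi--integrability of $\sg(w_{k})$ nor any $\lebe^{2}$--type bound on the rescaled symmetric gradients; the singular parts may concentrate, and the remainder need not vanish. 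The hand--wave towards Reshetnyak lower semicontinuity plus an Ekeland perturbation does not close this gap: Ekeland would at best give an approximate Euler--Lagrange relation for an approximate minimiser, but you still face the same lack of compactness when passing to the linear limit. Your Caccioppoli step has the same defect: writing $f(\xi)=f(z)+f'(z)(\xi-z)+Q_{z}(\xi-z)$ with $Q_{z}$ merely convex does not produce the claimed bound without a quantitative lower bound on $Q_{z}$ that is unavailable for large arguments in the linear--growth regime.

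The paper takes an entirely different, \emph{direct} route that avoids compactness altogether. The comparison map is not an $\mathcal{A}$--harmonic function produced by blow--up but the double mollification $v=u_{\delta,\varepsilon}$ of the minimiser itself, with $\delta=\varepsilon$ tuned as a power of the excess. The key analytical input replacing your harmonic approximation lemma is a Poincar\'{e}--type inequality $\int_{\Omega}|u-\rho_{\varepsilon}*u|\,\dif x\leq C\varepsilon\,|\sg(u)|(\overline{N_{\varepsilon}(\Omega)})$, proved via a cube decomposition and the $\bd$--Poincar\'{e} inequality on each cube. This gives quantitative closeness of $u$ to $u_{\delta,\varepsilon}$ in terms of $\varepsilon$ and the symmetric gradient measure, without any compactness. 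One then shows (Lemma~\ref{lem:adjust}) that the smoothness quantity $\mathbf{t}(u_{\delta,\varepsilon};x_{0},r/2)$ is controlled by a power of the excess, applies the linear decay estimates of Lemma~\ref{lem:lineardecayestimates} to the quadratic Taylor problem with the smooth function $u_{\delta,\varepsilon}$ as boundary datum, and finally transfers the decay back to $u$ via a careful cut--off and annulus--selection argument (Lemmas~\ref{lem:convex} and~\ref{lem:molest}). The payoff is that every estimate is explicit and no limit passage in a nonlinear or measure--valued expression is required; the price is the somewhat intricate combinatorics of choosing good radii in the proof of Proposition~\ref{prop:main1}. If you want to salvage your approach, you would need either an a priori $\ld$--bound on local minimisers (not available under the hypotheses of Theorem~\ref{thm:1}) or a genuinely new compactness mechanism for the rescaled sequence.
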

The theorem immediately implies partial regularity: Indeed, by the Lebesgue differentiation theorem for Radon measures \cite[Chpt. 1.6, Thm. 1]{EvGa}, $\mathscr{L}^{n}$--a.e. $x\in\Omega$ is a Lebesgue point for $\bm{\varepsilon}(v)$ and so the assumptions of the previous theorem are satisfied. In conclusion, if we put 
\begin{align*}
\widetilde{\Omega}_{v}:=\left\{x\in\Omega\colon\;\lim_{R\searrow 0}\dashint_{\ball(x,R)}|\mathscr{E}v-(\mathscr{E}v)_{\ball(x,R)}|\dif y +\frac{|\E^{s}v|(\ball(x,R))}{\mathscr{L}^{n}(\ball(x,R))}= 0 \right\}
\end{align*}
we consequently gain $\Omega_{v}=\widetilde{\Omega}_{v}$, openness of $\Omega_{v}$ together with $\mathscr{L}^{n}(\Sigma_{v})=0$. Once this is achieved, we use boundedness of singular integrals of convolution type on the H\"{o}lder--Zygmund classes $\besov_{\infty,\infty}^{s}$ to deduce that if $\sg(v)$ is of class $\hold^{0,\alpha}$ in a neighbourhood of some $x\in\Omega$, then so is $\D v$ in the same neighbourhood. Using the same method, we also obtain a regularity result for integrands of $p$--growth, $1<p<\infty$:
\begin{theorem}\label{thm:2}
Let $f\in \hold^{2}(\R_{\sym}^{n\times n};\R_{\geq 0})$ be convex and satisfy, for some $1<p<\infty$, $c_{1}|\xi|^{p}\leq f(\xi)\leq c_{2}(1+|\xi|^{p})$ for all $\xi\in\R_{\sym}^{n\times n}$ with two constants $0<c_{1}\leq c_{2}<\infty$. Suppose that $u\in\sobo^{1,p}(\Omega;\R^{n})$ is a local minimiser of $\mathfrak{F}$ given by \eqref{eq:varprin}. If $f''(z)$ is positive definite for any $z\in\R_{\sym}^{n\times n}$, then there exists an open subset $\Omega_{u}$ of $\Omega$ such that $\mathscr{L}^{n}(\Omega\setminus\Omega_{u})=0$ and $u$ is of class $\hold^{1,\alpha}$ in a neighbourhood of any of the elements of $\Omega_{u}$. 
\end{theorem}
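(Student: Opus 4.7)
The plan is to parallel the proof of Theorem \ref{thm:1} in the $p$-growth setting, where matters simplify considerably because $\sobo^{1,p}$ is reflexive for $1<p<\infty$, Korn's inequality \eqref{eq:korn} holds as stated, and there is no singular part of $\mathscr{E}u$ to contend with. The hypothesis that $f''(z)$ is positive definite at \emph{every} $z$ allows the decay scheme to be run at any Lebesgue point of $\mathscr{E}u$.

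First I would introduce an excess adapted to the $p$-growth,
\begin{align*}
\Phi(x_{0},R):=\dashint_{\ball(x_{0},R)}\bigl(|\mathscr{E}u-(\mathscr{E}u)_{\ball(x_{0},R)}|^{2}+|\mathscr{E}u-(\mathscr{E}u)_{\ball(x_{0},R)}|^{p}\bigr)\dif x,
\end{align*}
or, equivalently, its $V$-function variant in the sense of Acerbi--Fusco. A Caccioppoli-type inequality on almost-affine maps, obtained by testing the minimality of $u$ against $u-\varphi$ and invoking convexity together with the $p$-growth of $f$, supplies the basic a priori bound that drives everything that follows.

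The core step is an $\mathcal{A}$-harmonic approximation: at any $x_{0}$ where $\Phi(x_{0},R)$ is small and the frozen matrix $z:=(\mathscr{E}u)_{\ball(x_{0},R)}$ lies in a range on which $f''$ is positive definite, the minimiser $u$ is $\lebe^{p}$-close on $\ball(x_{0},R/2)$ to a solution $h$ of the constant-coefficient linearised system $\operatorname{div}(f''(z)\mathscr{E}h)=0$. This approximation follows by the standard compactness/contradiction scheme of De Giorgi--Acerbi--Fusco: a failing sequence of rescaled, normalised minimisers $w_{k}$ would, by Korn's inequality on the blown-up scale, converge weakly in $\sobo^{1,2}$ to a limit satisfying the linearised system, contradicting the assumed failure of $\lebe^{p}$-closeness. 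Solutions $h$ of the linearised system inherit classical interior $\hold^{\infty}$ estimates, giving the linear decay $\Phi_{h}(sR)\leq c s^{2}\Phi_{h}(R)$ for $s\in(0,\tfrac{1}{2})$, and combining this with the approximation yields, for suitable $\tau\in(0,1)$ and smallness threshold $\varepsilon_{0}>0$, the nonlinear excess decay $\Phi(x_{0},\tau R)\leq \tfrac{1}{2}\Phi(x_{0},R)$ whenever $\Phi(x_{0},R)<\varepsilon_{0}$.

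Iterating this at any $x_{0}$ where $\mathscr{E}u$ is a Lebesgue point -- which holds $\mathscr{L}^{n}$-almost everywhere -- produces a Campanato-type decay on a neighbourhood $U$ of $x_{0}$, hence $\mathscr{E}u\in\hold^{0,\alpha}(U;\R_{\sym}^{n\times n})$ for every $\alpha\in(0,1)$. The main obstacle, exactly as in Theorem \ref{thm:1}, is that ellipticity is only pointwise and not uniform: the constants in the harmonic approximation and in the decay depend on the linearisation at $x_{0}$, so openness of the regular set $\Omega_{u}$ requires tracking these constants under small perturbations of $z$ together with continuity of $z\mapsto f''(z)$. Finally, to upgrade $\mathscr{E}u\in\hold^{0,\alpha}$ to $\D u\in\hold^{0,\alpha}$ on $U$, I invoke the boundedness of the singular integral $\mathscr{E}u\mapsto \D u$ on the H\"{o}lder--Zygmund scale $\besov_{\infty,\infty}^{\alpha}$ already mentioned in the introduction, delivering the claimed $\hold^{1,\alpha}$-regularity of $u$ on $U$.
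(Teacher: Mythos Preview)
Your approach is fundamentally different from the paper's and contains a gap at the very point the paper is designed to overcome. The paper does \emph{not} use $\mathcal{A}$-harmonic approximation or any blow-up/compactness scheme; it runs a unified direct argument for all $1\leq p<\infty$ (Theorem~\ref{thm:final}): mollify the minimiser to $v=u_{\delta,\varepsilon}$ with $\delta=\varepsilon\sim R\,\excesso(u;z,R)^{\gamma}$ (Lemma~\ref{lem:adjust}), show by purely convexity/Jensen arguments that $v$ is an almost-minimiser of the frozen quadratic problem with quantified defect $\devi(v;z,R/2)$ (Propositions~\ref{prop:main1}, \ref{prop:propdecay}), apply the linear decay of Lemma~\ref{lem:lineardecayestimates} to $v$ (Proposition~\ref{prop:1}), and transfer the decay back to $u$ via a cut-and-paste on nested annuli. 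No compactness enters.

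Your gap is the claimed Caccioppoli inequality ``obtained by testing the minimality of $u$ against $u-\varphi$ and invoking convexity together with the $p$-growth of $f$''. Convexity and $p$-growth alone do \emph{not} deliver a Caccioppoli inequality; the standard derivation needs a uniform structural lower bound such as $\langle f''(z)\xi,\xi\rangle\geq\nu(1+|z|^{2})^{(p-2)/2}|\xi|^{2}$, which is precisely what Theorem~\ref{thm:2} does \emph{not} assume --- only pointwise positivity of $f''$. The same missing uniformity undermines the compactness step in your contradiction argument: without a global lower ellipticity bound, the normalised blow-ups $w_{k}$ need not be bounded in $\sobo^{1,2}$ on an inner ball, and the approximate $\mathcal{A}$-harmonicity estimate requires control of $f''(\mathscr{E}u)$ at points where $\mathscr{E}u$ may be far from the frozen value $\mathbf{m}$. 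The paper flags this explicitly in Section~1.3 (the novelty of Theorem~\ref{thm:2} is that ``we only require positive definiteness of $f''$'' whereas standard results ``rely on strong convexity assumptions''). The mollification approach circumvents the issue because $\sg(u_{\delta,\varepsilon})$ is uniformly trapped near $\mathbf{m}$ by Lemma~\ref{lem:adjust}, so only the ellipticity of $f''$ on a fixed ball $\mathbb{B}(\mathbf{m},\sigma)$ is ever invoked.
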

The novelty of the previous result is that, albeit partial regularity results in the superlinear growth regime are well--known, they usually rely on strong convexity assumptions; see \cite{FS}. Here, however, we only require positive definiteness of $f''$. 

In proving the previous two theorems, we shall pursue an unified approach for both $p=1$ and $1<p<\infty$. To do so, we collect in section \ref{sec:prelims} preliminary definitions and results and consequently turn to specific types of Poincar\'{e}--inequalities in section \ref{sec:poincare} which shall be instrumental in the proof of Theorems \ref{thm:1} and \ref{thm:2}. This, in turn, is accomplished in section \ref{sec:pr}. 
\subsection{Method of Proof and Comparison to other Strategies}
The method we make use of is entirely direct and relies on comparing minima to its mollifications which, in turn, are shown to solve suitable linear elliptic systems, see section \ref{sec:pr}. It is not clear to us how to establish Theorem \ref{thm:1} by means of other methods of vectorial elliptic regularity theory as most of them crucially rely on compactness arguments; see \cite{Beck} for a comprehensive overview. These, however, seem hard to be employed in the present setup. So, for instance, assuming a strong convexity condition such as, e.g., that of $\mu$--ellipticity (see \cite{GK,Bild}), \textsc{De Giorgi}'s blow--up technique would yield partial regularity if we already knew that the minimiser is of class $\ld$ which, however, in the fairly general setup of Theorem \ref{thm:1} is not the case. For a detailled discussion of such issues, the interested reader is referred to \cite{Gm}. 
\begin{center}
\textbf{Acknowledgment}
\end{center}
The author warmly thanks his advisor \textsc{Jan Kristensen} for proposing the theme of the present paper and for numerous discussions throughout. Moreover, he is indebted to \textsc{Lars Diening} for various discussions regarding Poincar\'{e}--type inequalities. 
\section{Preliminaries}\label{sec:prelims}
This section fixes notation and collects facts about $\bd$--functions, convex functions of measures and various comparison estimates that will prove convenient in the partial regularity proof below.  
\subsection{General Notation}
Unless otherwise stated, we assume $\Omega$ to be an open and bounded subset of $\R^{n}$ with Lipschitz boundary $\partial\Omega$. Given $x_{0}\in\R^{n}$ and $R>0$, we denote $\ball(x_{0},R):=\ball_{R}(x_{0}):=\{x\in\R^{n}\colon |x-x_{0}|<r\}$ the open ball with radius $R$ centered at $x_{0}$. Given two finite dimensional vector spaces $V$ and $E$, the space $\hom(V;E)$ of linear mappings $T\colon V\to E$ is equipped with the Hilbert--Schmidt norm; then we denote for $R>0$ and $A\in\hom(V;E)$ the open ball with radius $R>0$ centered at $A$ by $\mathbb{B}_{R}(A)$. The $n$--dimensional Lebesgue measure is denoted $\mathscr{L}^{n}$ and the $(n-1)$--dimensional Hausdorff measure is denoted $\mathscr{H}^{n-1}$. Given two positive, real valued functions $f,g$, we indicate by $f\lesssim g$ that $f\leq C g$ with a constant $C>0$. If $f\lesssim g$ and $g\lesssim f$, we write $f\approx g$. Finally, given a measurable set $\mathcal{O}$ with $0<\mathscr{L}^{n}(\mathcal{O})<\infty$ and a measurable function $u\colon \mathcal{O}\to\R^{m}$ for $m\in\mathbb{N}$, we use the equivalent notations 
\begin{align*}
(u)_{\mathcal{O}}:=\dashint_{\mathcal{O}}u \dif x := \frac{1}{\mathscr{L}^{n}(\mathcal{O})}\int_{\mathcal{O}}u\dif x
\end{align*}
whenever the last expression is finite. If $\mathcal{O}=\ball(z,r)$ is a ball, we also use the shorthand $(u)_{z,r}:=(u)_{\ball(z,r)}$. Lastly, $\langle\cdot,\cdot\rangle$ denotes the euclidean inner product in $\R^{m}$ throughout, where the explicit value of $m$ will be clear from the context.
\subsection{Functions of Bounded Deformation}
In this subsection we gather various facts on functions of bounded deformation that shall prove important in the main part of the paper. Let $\Omega$ be an open subset of $\R^{n}$. We say that a measurable mapping $v\colon\Omega\to\R^{n}$ belongs to $\bd(\Omega)$ and is then said to be of \emph{bounded deformation} if and only if $v\in\lebe^{1}(\Omega;\R^{n})$ and its symmetric distributional gradient can be represented by a finite $\R_{\sym}^{n\times n}$--valued Radon measure. By the Riesz representation theorem for Radon measures, it is sraightforward to verify that the latter amounts to requiring 
\begin{align*}
|\sg(v)|(\Omega):=\sup\left\{\int_{\Omega}\langle u,\di(\varphi)\rangle\dif x\colon\;\varphi\in\hold_{c}^{1}(\Omega;\R_{\sym}^{n\times n},\;|\varphi|\leq 1 \right\}<\infty. 
\end{align*}
The usual norm on $\bd(\Omega)$ is given by $\|v\|_{\bd(\Omega)}:=\|v\|_{\lebe^{1}(\Omega;\R^{n})}+|\sg(v)|(\Omega)$, however, the norm topology is too strong for most applications. Instead, we consider two other notions of convergence defined as follows. We say that a sequence $(v_{k})\subset\bd(\Omega)$ converges to $v\in\bd(\Omega)$ in the \emph{weak*--sense} (in symbols $v_{k}\wstar v$) if and only if $v_{k}\to v$ strongly in $\lebe^{1}(\Omega;\R^{n})$ and $\sg(v_{k})\wstar \sg(v)$ as $k\to\infty$ in the sense of $\R^{n\times n}$--valued Radon measures on $\Omega$. Moreover, we say that $(v_{k})$ converges to $v$ \emph{strictly} provided $v_{k}\wstar v$ and $|\sg(v_{k})|(\Omega)\to|\sg(v)|(\Omega)$ as $k\to\infty$. 

Let us further remark that if $\Omega$ is Lipschitz, then sequences which are bounded in $\bd(\Omega)$ allow to extract subsequences which converge to some $v\in\bd(\Omega)$ in the weak*--sense. This is a simple consequence of the Rellich--Kondrachov Theorem in conjunction with the Banach--Alaoglu Theorem. 
It is important to note that $\bd(\Omega)\cap\hold^{\infty}(\Omega;\R^{n})$ is dense with respect to strict convergence. 

Due to \textsc{Strang \& Temam} \cite{ST} (also see \cite{AG1}), elements of $\bd(\Omega)$ attain traces in the $\lebe^{1}$--sense at the boundary provided $\Omega$ is an open and bounded Lipschitz subset of $\R^{n}$. Denoting the corresponding boundary trace operator $\trace\colon \bd(\Omega)\to \lebe^{1}(\partial\Omega;\R^{n})$, we record that $\trace_{\bv}=\trace|_{\bv}$ with the usual trace operator $\trace_{\bv}$ on $\bv(\Omega;\R^{n})$. Hence, by a result of \textsc{Gagliardo}, the trace operator $\trace\colon \bd(\Omega)\to\lebe^{1}(\partial\Omega;\R^{n})$ is onto indeed. Moreover, the Gauss--Green formula takes the following form: Given $\varphi=(\varphi_{lm})_{l,m=1,...,n}\in\hold_{c}^{\infty}(\R^{n};\R_{\sym}^{n\times n})$ and $u\in\bd(\Omega)$, we have 
\begin{align}\label{eq:trace}
\sum_{l,m=1}^{n}\int_{\Omega}\varphi_{lm}\dif \E_{lm}u+\int_{\Omega}\langle u,\di(\varphi)\rangle\dif x = \int_{\partial\Omega}\langle \trace(u)\odot\nu_{\partial\Omega},\varphi\rangle\dif \mathcal{H}^{n-1}, 
\end{align}
where $\nu_{\partial\Omega}$ is the outer unit normal to $\partial\Omega$. Lastly, it is important to note that the trace operator is continuous for strict convergence indeed whereas this is not so for weak*--convergence as specified above; in fact, we have $\overline{\hold_{c}^{\infty}(\Omega;\R^{n})}^{\text{w}^{*}}=\bd(\Omega)$. Regarding the fine properties of $\bd$--functions, we refer the reader to \cite{ACD}. We conclude with following variant of Poincar\'{e}'s Inequality in $\bd$ which will serve as an important auxiliary tool in the following. We denote for an open subset $U$ of $\R^{n}$ the \emph{rigid deformations}
\begin{align}
\mathcal{R}(U):=\big\{u\colon U\to\R^{n}\colon\; \sg(u)\equiv 0\;\text{in}\;U\big\}
\end{align}
and note that if $U$ is connected, then any element $u\in\mathcal{R}(U)$ is of the form $u(x)=Ax+b$ with scew--symmetric $A\in\R^{n\times n}$ and $b\in\R^{n}$. 
\begin{lemma}\label{lem:poincare}
Let $Q\neq\{0\}$ be an open cube in $\R^{n}$. There exists a constant $c=c(n)>0$ such that for all $v\in\bd(\Omega)$ there exists $r\in \mathcal{R}(Q)$ such that 
\begin{align*}
\|v-r\|_{\lebe^{1}(Q;\R^{n})}=\inf_{\widetilde{r}\in\mathcal{R}(U)}\|v-\widetilde{r}\|_{\lebe^{1}(Q;\R^{n})}\leq c\ell(Q)|\sg(v)|(\overline{Q}),  
\end{align*}
where $\ell(Q)$ is the sidelength of the cube $Q$. 
\end{lemma}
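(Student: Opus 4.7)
The plan is to combine a rescaling reduction with a Rellich--compactness contradiction argument.

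First I would reduce to the unit cube $Q_{1}:=(0,1)^{n}$ by the substitution $\tilde v(y):=v(x_{0}+\ell y)$ for $Q=x_{0}+(0,\ell)^{n}$. A direct change of variables yields $\|\tilde v\|_{\lebe^{1}(Q_{1})}=\ell^{-n}\|v\|_{\lebe^{1}(Q)}$ together with $|\sg(\tilde v)|(\overline{Q_{1}})=\ell^{1-n}|\sg(v)|(\overline{Q})$, while the space of rigid deformations is invariant under this rescaling (an affine map $x\mapsto Ax+b$ with scew $A$ pulls back to $y\mapsto(\ell A)y+(Ax_{0}+b)$). The required factor $\ell(Q)$ in the estimate thus emerges as the discrepancy between these two scalings. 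Existence of an optimal $r\in\mathcal{R}(Q)$ is then automatic: $\mathcal{R}(Q)$ is a finite--dimensional (hence closed) subspace of $\lebe^{1}(Q;\R^{n})$ on which the distance functional $r\mapsto\|v-r\|_{\lebe^{1}}$ is continuous and coercive, so the infimum is attained.

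The heart of the matter is to establish the existence of $c=c(n)>0$ such that
\begin{equation*}
\inf_{r\in\mathcal{R}(Q_{1})}\|v-r\|_{\lebe^{1}(Q_{1};\R^{n})}\leq c\,|\sg(v)|(\overline{Q_{1}})
\end{equation*}
for every $v\in\bd(Q_{1})$. Arguing by contradiction, take a sequence $(v_{k})\subset\bd(Q_{1})$ with $\inf_{r}\|v_{k}-r\|_{\lebe^{1}(Q_{1})}=1$ and $|\sg(v_{k})|(\overline{Q_{1}})\to 0$. Replacing $v_{k}$ by $v_{k}-r_{k}$ for an optimal approximant $r_{k}\in\mathcal{R}(Q_{1})$ (which leaves the symmetric gradient measure unchanged) one may assume $\|v_{k}\|_{\lebe^{1}(Q_{1})}=1$ and yet $\|v_{k}-r\|_{\lebe^{1}(Q_{1})}\geq 1$ for every $r\in\mathcal{R}(Q_{1})$. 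The sequence is then bounded in $\bd(Q_{1})$; by the $\bd$ Rellich--Kondrachov compactness recorded in Section~\ref{sec:prelims}, a subsequence converges strongly in $\lebe^{1}(Q_{1};\R^{n})$ and weakly-$*$ in $\bd$ to some $v\in\bd(Q_{1})$. Weak-$*$ lower semicontinuity of the total variation on the closed set $\overline{Q_{1}}$ forces $|\sg(v)|(\overline{Q_{1}})=0$, so $v\in\mathcal{R}(Q_{1})$, while strong $\lebe^{1}$ convergence preserves the inequality $\|v-r\|_{\lebe^{1}}\geq 1$ for every $r\in\mathcal{R}(Q_{1})$. Choosing $r=v$ yields $0\geq 1$, the desired contradiction.

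The main technical obstacle lies in treating the symmetric gradient measure on the \emph{closed} cube $\overline{Q_{1}}$: in principle a singular part of $\sg(v_{k})$ could concentrate on $\partial Q_{1}$ and be invisible to weak-$*$ convergence of measures on the open cube, which would spoil lower semicontinuity. This is most cleanly handled by viewing each $v_{k}$ as an element of $\bd$ on a slightly enlarged open cube via the $\bd$-extension principle across $\partial Q_{1}$ (standard since $Q_{1}$ is Lipschitz); working on the enlarged domain, weak-$*$ convergence of Radon measures automatically gives lower semicontinuity on the compact set $\overline{Q_{1}}$, which is all that the contradiction argument requires.
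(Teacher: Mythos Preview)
The paper does not prove this lemma; it is recorded as a known preliminary fact and used as a black box. Your rescaling plus compactness--contradiction argument is a standard and correct way to obtain such a Poincar\'e inequality for $\bd$, and the reduction to the unit cube together with the attainment of the infimum on the finite--dimensional subspace $\mathcal{R}(Q)$ is handled cleanly.

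One correction, though: the final paragraph is both unnecessary and, as written, does not work. If you run the contradiction on $\bd(Q_{1})$ with $|\sg(v_{k})|(Q_{1})\to 0$, weak--$*$ lower semicontinuity of the total variation on the \emph{open} cube already yields $|\sg(v)|(Q_{1})\leq\liminf_{k}|\sg(v_{k})|(Q_{1})=0$, so $v\in\mathcal{R}(Q_{1})$ with no reference to the boundary at all. This proves the inequality with $|\sg(v)|(Q)$ on the right, which is stronger than the stated version with $|\sg(v)|(\overline{Q})$; the latter follows trivially. By contrast, your proposed fix via the $\bd$--extension operator does not preserve smallness of the symmetric gradient: extending $v_{k}$ across $\partial Q_{1}$ produces a jump part $(\nu_{\partial Q_{1}}\odot\trace(v_{k}))\mathcal{H}^{n-1}\res\partial Q_{1}$ whose mass is controlled by $\|\trace(v_{k})\|_{\lebe^{1}(\partial Q_{1})}$, not by $|\sg(v_{k})|(Q_{1})$, and there is no reason this should tend to zero. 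Simply drop that paragraph and the argument is complete.
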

\subsection{Convex Functions of Measures}\label{sec:cfom}
In this section we recall the rudiments of applying functions to measures as it is required for the subsequent paragraphs, and refer the reader to \textsc{Demengel} \& \textsc{Temam} \cite{DT} and \textsc{Anzellotti} \cite{Anz} for more detail. Let $f\colon\R^{m}\to\R_{\geq 0}$ be a continuous and convex functions satisfying \eqref{eq:lingrowth}. Under these assumptions, the recession function given by \eqref{eq:recession} exists (with the obvious modification) and is a positively $1$--homogeneous and convex function. Now let $\Omega\subset\R^{n}$ be open and let $\mu$ be an $\R^{m}$--valued finite Radon measure on $\Omega$ with $(\mathscr{L}^{n},\mu)\ll\nu$. Let $\nu$ be a non--negative, finite Radon measure on $\Omega$ and consider its Radon--Nikod\v{y}m decomposition $\mu=\mu_{\nu}^{ac}+\mu_{\nu}^{s}=\frac{\dif\mu}{\dif\nu}\nu+\frac{\dif\mu}{\dif |\mu^{s}|}|\mu^{s}|$ with respect to $\nu$ , where $\mu_{\nu}^{ac}$ and $\mu_{\nu}^{s}$ are the absolutely continuous and singular parts of $\mu$ with respect to $\nu$, respectively. We define a new measure $f(\mu)$ on the Borel $\sigma$--algebra $\mathscr{B}(\Omega)$ by 
\begin{align}\label{eq:functionofmeasure}
f(\mu)(A):= \int_{A}f^{\#}\left(\frac{\dif\mathscr{L}^{n}}{\dif\nu},\frac{\dif\mu}{\dif\nu}\right)\dif\nu,
\end{align}
where $|\mu^{s}|$ is the total variation measure associated with the singular part of $\mu$ with respect to $\nu$, where $f^{\#}\colon [0,\infty)\times\R^{m}\to\R$ is the \emph{homogenised} or \emph{perspective integrand} which, for $t\geq 0$ and $\xi\in\R^{m}$ is given by 
\begin{align*}
f^{\#}(t,\xi):=\begin{cases} tf(\xi/t)&\;\text{given}\,t>0,\\
f^{\infty}(\xi)&\;\text{given}\;t=0. 
\end{cases}
\end{align*}
Since $f^{\#}$ is positively $1$--homogeneous, \eqref{eq:functionofmeasure} is well--defined indeed, in particular, it does not depend on the choice of $\nu$. It is easy to see that if we put $\nu:=|\mu^{s}|+\mathscr{L}^{n}$, where $\mu^{s}$ is the singular part of $\mu$ with respect to $\mathscr{L}^{n}$, then \eqref{eq:functionofmeasure} becomes 
\begin{align*}
f(\mu)(A)=\int_{A}f\left(\frac{\dif\mu}{\dif\mathscr{L}^{n}}\right)\dif\mathscr{L}^{n}+\int_{A}f^{\infty}\left(\frac{\dif \mu}{\dif |\mu^{s}|}\right)\dif |\mu^{s}|\qquad\text{for}\;A\in\mathscr{B}(\Omega). 
\end{align*}
The following theorem of \textsc{Reshetnyak} \cite{Reshetnyak}, which is instrumental for all of what follows, gives necessary conditions for the functionals from above to be lower semicontinuous with respect to weak*-- and area--strict topologies:
\begin{theorem}[Reshetnyak Continuity Theorem]\label{thm:resh}
Suppose $F\colon\R^{m}\to\R_{\geq 0}$ is strictly convex, positively $1$--homogeneous and is of linear growth, i.e., satisfies \eqref{eq:lingrowth} with the obvious modifications. If $(\mu_{j})$ is a sequence of $\R^{m}$--valued finite Radon measures on $\Omega$ such that both $\mu_{j}\wstar\mu$ as $j\to\infty$ and $F(\mu_{j})(\Omega)\to F(\mu)(\Omega)$ as $j\to\infty$ for some Radon measure $\mu$ on $\Omega$, then there holds $G(\mu_{j})(\Omega)\to G(\mu)(\Omega)$ for any positively homogeneous function $G\colon\R^{m}\to\R_{\geq 0}$ satisfying \eqref{eq:lingrowth} (with the obvious modification). 
\end{theorem}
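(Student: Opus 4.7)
The plan is to lift each measure to the compact product $\overline{\Omega}\times S^{m-1}$ via its polar decomposition and to identify the weak* limit by exploiting strict convexity of $F$. Writing $d\mu_{j}=\sigma_{j}\,d|\mu_{j}|$ with $|\sigma_{j}|=1$ $|\mu_{j}|$-a.e., I form the finite Radon measure
\[
\pi_{j}:=(x\mapsto(x,\sigma_{j}(x)))_{\sharp}|\mu_{j}|\qquad\text{on }\overline{\Omega}\times S^{m-1},
\]
and analogously $\pi$ from $\mu=\sigma_{\mu}|\mu|$. For any continuous positively $1$-homogeneous $H\colon\R^{m}\to\R$ one then has $H(\mu_{j})(\Omega)=\int H(\sigma)\,d\pi_{j}$ and $H(\mu)(\Omega)=\int H(\sigma)\,d\pi$, so the whole claim collapses to proving the single weak* convergence $\pi_{j}\wstar\pi$ in $C(\overline{\Omega}\times S^{m-1})^{*}$.

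By $c_{1}|\xi|\leq F(\xi)$, the total masses $\pi_{j}(\overline{\Omega}\times S^{m-1})=|\mu_{j}|(\overline{\Omega})$ are uniformly bounded, so Banach--Alaoglu on the compact product extracts a subsequential weak* limit $\tilde\pi$. Testing against $\varphi(x)\cdot\sigma$ with $\varphi\in C_{c}(\Omega;\R^{m})$ and using $\mu_{j}\wstar\mu$ identifies the "first--moment" marginal of $\tilde\pi$ on $\Omega$ as $\mu$: disintegrating $\tilde\pi=\tilde\pi_{x}\otimes\lambda$ over its first marginal $\lambda$, with each $\tilde\pi_{x}$ a probability measure on $S^{m-1}$, gives $\mu=v\lambda$ on $\Omega$ where $v(x):=\int_{S^{m-1}}\sigma\,d\tilde\pi_{x}(\sigma)$ satisfies $|v(x)|\leq 1$. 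Since $F|_{S^{m-1}}$ is bounded and continuous, the hypothesis together with weak* convergence yields
\[
F(\mu)(\Omega)=\int_{\overline{\Omega}\times S^{m-1}}F(\sigma)\,d\tilde\pi\;\geq\;\int_{\Omega}F(v(x))\,d\lambda\;=\;F(\mu)(\Omega),
\]
where the inequality is Jensen applied pointwise to the probability measure $\tilde\pi_{x}$ and the convex $F$, and the last identity uses $1$-homogeneity (so $F^{\#}=F$ along $\lambda$). Equality throughout forces both $\tilde\pi(\partial\Omega\times S^{m-1})=0$, via the lower bound $F\geq c_{1}|\cdot|$, and $F(v(x))=\int_{S^{m-1}}F(\sigma)\,d\tilde\pi_{x}$ for $\lambda$-a.e. $x$; strict convexity of $F$ across rays then compels each $\tilde\pi_{x}$ to be a Dirac on $S^{m-1}$, whence $|v|=1$, $\lambda=|\mu|$, and $\tilde\pi=\pi$. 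A subsequence principle upgrades this to $\pi_{j}\wstar\pi$ along the full sequence, and testing against $G(\sigma)\in C(S^{m-1})$ delivers the claim.

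The main obstacle is precisely the rigidity step: from the single scalar identity $\int F\,d\tilde\pi=\int F\,d\pi$ one must draw both directional information (each $\tilde\pi_{x}$ is a Dirac located at $\sigma_{\mu}(x)$) and metric information ($\lambda=|\mu|$, with no spurious mass on an excess $\lambda-|\mu|$ or on $\partial\Omega$). Strict convexity of $F$ across rays is indispensable here; a merely convex positively $1$-homogeneous $F$ could detect neither angular oscillation of $\tilde\pi_{x}$ between distinct rays nor excess mass carried by $\lambda-|\mu|$, and the implication would genuinely fail.
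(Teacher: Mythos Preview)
The paper does not prove this theorem; it is quoted as a classical result and attributed to \textsc{Reshetnyak}~\cite{Reshetnyak}, so there is no in-paper argument to compare against.

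Your argument is correct and is in fact the standard modern proof of Reshetnyak's continuity theorem (see, e.g., Ambrosio--Fusco--Pallara or Rindler's textbook): lift each $\mu_{j}$ to the compact sphere bundle $\overline{\Omega}\times S^{m-1}$ via polar decomposition, extract a weak* cluster point $\tilde\pi$, disintegrate over the base, and use the equality case in Jensen's inequality together with strict convexity transverse to rays to force each fibre measure $\tilde\pi_{x}$ to be a Dirac. The identification $\lambda=|\mu|$ and the vanishing of boundary mass then fall out for free from the lower bound $F\geq c_{1}|\cdot|$, exactly as you say. The only implicit hypothesis you are using that is not spelled out in the statement is continuity of $G$, which is needed to test $G|_{S^{m-1}}\in C(\overline{\Omega}\times S^{m-1})$ against $\pi_{j}\wstar\pi$; this is harmless since without continuity the expression $G(\mu)(\Omega)$ is not well-defined in the framework of Section~\ref{sec:cfom} anyway. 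Your closing remark on the indispensability of strict convexity is accurate: with $F(\xi)=|\xi_{1}|+|\xi_{2}|$ in $\R^{2}$, say, the fibre measures can oscillate between the coordinate axes without being detected, and the conclusion genuinely fails.
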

As a matter of fact, convexity of the area intgrand $\m_{2}(\xi):=\sqrt{1+|\xi|^{2}}$ allows to introduce the notion of area--strict convergence: We say that a sequence $(v_{k})\subset\bd(\Omega)$ converges to $v\in\bd(\Omega)$ in the \emph{area--strict sense} (in symbols $v_{j}\stackrel{\langle\cdot\rangle}{\to}v$) provided $v_{k}\to v$ strongly in $\lebe^{1}(\Omega;\R^{n})$ and $\sqrt{1+|\sg(v_{k})|^{2}}(\Omega)\to\sqrt{1+|\sg(v)|^{2}}(\Omega)$ as $k\to\infty$. As an immediate consequence of Theorem \ref{thm:resh}, we obtain 
\begin{lemma}\label{lem:area}
Let $(v_{j})\subset\bd(\Omega)$ and $v\in\bd(\Omega)$ be such that $v_{j}\to v$ in the area--strict sense as $j\to\infty$. Then for any convex function $F\colon\R_{\sym}^{n\times n}\to\R_{\geq 0}$ that satisfies \eqref{eq:lingrowth} there holds $F[\sg(v_{j})](\Omega)\to F[\sg(v)](\Omega)$ as $j\to\infty$. 
\end{lemma}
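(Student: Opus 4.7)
The plan is to deduce the lemma directly from Reshetnyak's Continuity Theorem (Theorem \ref{thm:resh}) by means of the standard \emph{graph-measure} trick, which converts the linear-growth integrand $F$ into a positively $1$-homogeneous integrand living on a space of one higher dimension.

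First, I would form the augmented $\R\times\R_{\sym}^{n\times n}$-valued Radon measures
\[
\widetilde{\mu}_{j}:=(\mathscr{L}^{n}\res\Omega,\,\sg(v_{j})),\qquad \widetilde{\mu}:=(\mathscr{L}^{n}\res\Omega,\,\sg(v)),
\]
and observe that area-strict convergence $v_{j}\to v$ translates exactly into strict convergence of $\widetilde{\mu}_{j}$ to $\widetilde{\mu}$ in the sense of vector-valued Radon measures. Indeed, $v_{j}\to v$ in $\lebe^{1}(\Omega;\R^{n})$ combined with the $\bd$-boundedness coming from $\sqrt{1+|\sg(v_{j})|^{2}}(\Omega)\to\sqrt{1+|\sg(v)|^{2}}(\Omega)$ yields $\sg(v_{j})\wstar\sg(v)$ and hence $\widetilde{\mu}_{j}\wstar\widetilde{\mu}$, while the convergence of total areas is nothing but $H(\widetilde{\mu}_{j})(\Omega)\to H(\widetilde{\mu})(\Omega)$ with $H(t,\xi):=\sqrt{t^{2}+|\xi|^{2}}$ the Euclidean norm on $\R\times\R_{\sym}^{n\times n}$.

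Since $H$ is positively $1$-homogeneous, of linear growth, and strictly convex in the sense compatible with positive homogeneity (the Euclidean unit ball is strictly convex), Theorem \ref{thm:resh} applies with $H$ in the role of $F$ and yields $G(\widetilde{\mu}_{j})(\Omega)\to G(\widetilde{\mu})(\Omega)$ for every positively $1$-homogeneous $G\colon\R\times\R_{\sym}^{n\times n}\to\R_{\geq 0}$ of linear growth. Applied to $G:=F^{\#}$, the perspective integrand of $F$ introduced in Section \ref{sec:cfom}, and using the reference measure $\nu:=\mathscr{L}^{n}+|\E^{s}v_{j}|$ to perform the Radon--Nikod\'{y}m decomposition of $\widetilde{\mu}_{j}$, one reads off
\[
F^{\#}(\widetilde{\mu}_{j})(\Omega)=\int_{\Omega}F(\mathscr{E}v_{j})\dif x+\int_{\Omega}F^{\infty}\!\left(\frac{\dif\E v_{j}}{\dif|\E^{s}v_{j}|}\right)\dif|\E^{s}v_{j}|=F[\sg(v_{j})](\Omega),
\]
and analogously for the limit $v$; the claim then follows.

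The only point that I expect to require care, and hence the main obstacle, is that $F^{\#}$ is \emph{a priori} only defined on the half-space $[0,\infty)\times\R_{\sym}^{n\times n}$, whereas Theorem \ref{thm:resh} is stated for functions on all of $\R^{m}$. Because both $\widetilde{\mu}_{j}$ and $\widetilde{\mu}$ are supported in that half-space (their first marginal being $\mathscr{L}^{n}\res\Omega$), this is inconsequential: one extends $F^{\#}$ to negative first argument arbitrarily, for instance by $F^{\#}(t,\xi):=F^{\infty}(\xi)$ for $t\leq 0$, a choice which preserves positive $1$-homogeneity and linear growth without altering any of the integrals appearing in the argument. With this cosmetic adjustment Reshetnyak's theorem applies verbatim.
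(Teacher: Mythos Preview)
Your proof is correct and follows exactly the route the paper has in mind: the paper merely asserts the lemma ``as an immediate consequence of Theorem~\ref{thm:resh}'', and the passage from a linear-growth integrand $F$ to its perspective $F^{\#}$ acting on the graph measure $(\mathscr{L}^{n}\res\Omega,\sg(v_{j}))$ is precisely the mechanism encoded in the paper's equation~\eqref{eq:functionofmeasure}. Your identification of the area functional with the Euclidean norm $H(t,\xi)=\sqrt{t^{2}+|\xi|^{2}}$ on the augmented space, and your remark on extending $F^{\#}$ harmlessly to negative $t$, supply the details the paper leaves implicit.

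One cosmetic point: the statement of Theorem~\ref{thm:resh} in the paper requires $G$ to satisfy the two-sided linear growth bound~\eqref{eq:lingrowth}, but $F^{\#}$ may fail the lower bound (e.g.\ $F^{\#}(1,0)=F(0)$ could vanish). This is not a genuine obstruction---the standard Reshetnyak continuity theorem only needs $G$ continuous and positively $1$-homogeneous---but if you want to stay strictly within the paper's formulation you can apply the theorem to $G:=F^{\#}+H$ and to $G:=H$ separately and subtract.
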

Finally, we shall make use of the following area--strict approximation result whose proof appears as a straightforward modification of the arguments given, e.g., in \cite[Lemma A.3.1]{FS}:
\begin{lemma}[Area--Strict Smooth Approximation]
Let $\Omega$ be an open and bounded Lipschitz subsets of $\R^{n}$. Given $u\in\bd(\Omega)$, there exists $(u_{j})\subset (\hold^{\infty}\cap\ld)(\Omega)$ such that $v_{j}\stackrel{\langle\cdot\rangle}{\to}v$ as $j\to\infty$. 
\end{lemma}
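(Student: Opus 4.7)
The plan is to follow the classical Anzellotti--Giaquinta partition-of-unity construction, familiar from the $\bv$ setting, adapted to the symmetric gradient. The core observation is that on any open set interior-separated from $\partial\Omega$, standard mollification preserves membership in $\ld$ and behaves well with respect to the area integrand via Jensen's inequality; the real work is to splice such local mollifications into a single globally smooth approximation while controlling the boundary artefacts and the commutator between mollification and the product rule for $\sg$.

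Concretely, I would first construct an exhaustion $\Omega_k := \{x \in \Omega : \dist(x, \partial\Omega) > 1/(k+k_0)\} \cap \ball(0,k)$ with $k_0$ large enough that $\Omega_1 \neq \emptyset$, set $U_k := \Omega_{k+1} \setminus \overline{\Omega_{k-1}}$, and fix a smooth partition of unity $(\eta_k)_{k\geq 1}$ with $\eta_k \in \hold_c^{\infty}(U_k)$ and $\sum_k \eta_k \equiv 1$ on $\Omega$, whose cover has bounded overlap (at most three $k$ meeting any $x$). For each $j \in \N$, pick mollification radii $\epsilon_{j,k} > 0$ smaller than $\tfrac{1}{2}\dist(U_k, \partial\Omega)$ and so small that $\|\rho_{\epsilon_{j,k}} * (\eta_k u) - \eta_k u\|_{\lebe^1(\Omega)}$ and $\|\rho_{\epsilon_{j,k}} * (u \odot \nabla \eta_k) - u \odot \nabla \eta_k\|_{\lebe^1(\Omega)}$ are each bounded by $2^{-k}/j$. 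Define $u_j := \sum_{k} \rho_{\epsilon_{j,k}} * (\eta_k u)$, which is smooth on $\Omega$ by local finiteness, converges to $u$ in $\lebe^1$ by construction, and by the Leibniz rule satisfies
\[
\sg(u_j) = \sum_{k} \rho_{\epsilon_{j,k}} * (\eta_k \sg(u)) + \sum_{k} \rho_{\epsilon_{j,k}} * (u \odot \nabla \eta_k).
\]
Since $\sum_k \nabla \eta_k \equiv 0$, the second sum equals $\sum_k \bigl[\rho_{\epsilon_{j,k}} * (u \odot \nabla \eta_k) - u \odot \nabla \eta_k\bigr]$, which has total $\lebe^1$-mass at most $2/j$ and hence vanishes in the limit.

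The lower bound $\m_2(\sg(u))(\Omega) \leq \liminf_j \m_2(\sg(u_j))(\Omega)$ is then immediate from Reshetnyak lower semicontinuity (Theorem \ref{thm:resh}) applied to the weak*-convergence $\sg(u_j) \wstar \sg(u)$. For the reverse inequality I would use the mollifier Jensen bound $\int_\Omega \m_2(\rho_\epsilon * \mu)\dif x \leq \m_2(\mu)(\{x : \dist(x, \spt\mu) < \epsilon\})$ applied to each summand, then telescope over $k$ using bounded overlap together with $\sum_k \eta_k \equiv 1$. Area-strict convergence in turn forces $\|\sg(u_j)\|_{\lebe^1(\Omega)} \to |\sg(u)|(\Omega) < \infty$, so $u_j \in \ld(\Omega)$ as required. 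I expect the upper bound to be the main obstacle: while the Jensen step is formally clean, it must be applied to the perspective integrand $\m_2^\#$ from section \ref{sec:cfom} acting on a measure with nontrivial singular part, and the subsequent telescoping across $k$ is delicate because the factors $\eta_k$ appear inside the nonlinear functional $\m_2$ and must be absorbed using positive $1$-homogeneity of $\m_2^\infty$ rather than linearity of $\m_2$ itself---this is precisely the point where the Anzellotti--Giaquinta trick does its work.
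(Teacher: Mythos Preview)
Your proposal is correct and follows precisely the standard Anzellotti--Giaquinta partition-of-unity construction that the paper defers to via \cite[Lemma~A.3.1]{FS}; the paper gives no proof of its own beyond that reference. One minor slip: the homogeneity you need in the final step is that of the perspective integrand $\m_{2}^{\#}(t,\xi)=\sqrt{t^{2}+|\xi|^{2}}$ (which you invoke correctly a few lines earlier) rather than of the recession function $\m_{2}^{\infty}$.
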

\subsection{Decay Estimates for Linear Systems}
Aiming for comparison with solutions of linear elliptic systems that depend on the symmetric gradients in the partial regularity proof, we collect now decay estimates for such mappings. To this end, let $\ball\subset\R^{n}$ be a ball and consider for $w\in\sobo^{1,2}(\ball;\R^{n})$ the variational principle 
\begin{align}\label{eq:linearproblem}
\text{to minimise}\;\;\mathfrak{F}[v]:=\int_{\Omega}g(\sg(v))\dif x\qquad \text{over}\;v\in w+\sobo_{0}^{1,2}(\Omega;\R^{n}), 
\end{align}
where $g(\xi):=\langle\mathcal{A}\xi,\xi\rangle + \langle b,\xi\rangle + c$ is a polynomial of degree two with arbitrary $b\in\R^{n}$ and $c\in\R$, whereas we assume that $\mathcal{A}\colon\R^{n\times n}\times\R^{n\times n}\to\R$ is elliptic in that there exists $\ell>0$ such that $\langle\mathcal{A}\xi,\xi\rangle \geq \ell |\xi|^{2}$ for all $\xi\in\R^{n\times n}$. As a small variation of \cite[Lemma 3.0.5]{FS}, we provide the following lemma for the reader's convenience:
\begin{lemma}\label{lem:lineardecayestimates}
There exists a unique solution $u\in w+\sobo_{0}^{1,2}(\ball;\R^{n})$ of \eqref{eq:linearproblem}. Moreover, this solution satisfies the following: 
\begin{enumerate}
\item There exists a constant $c=c(n,|\mathcal{A}|,\ell)>0$ such that if $\ball(z,R)\subset\Omega$, then for all $0<r<R$ there holds 
\begin{align*}
\int_{\ball(z,r)}|\adop (u)-(\adop (u))_{z,r}|^{2}\dif x \leq c\left(\frac{r}{R}\right)^{n+2}\int_{\ball(z,R/2)}|\adop (u)-(\adop (u))_{z,R/2}|^{2}\dif x. 
\end{align*}
\item There exists a constant $c=c(n,|\mathcal{A}|,\ell)>0$ such that if $\ball(z,R)\subset\Omega$ and $w\in\hold^{1,\alpha}(\Omega;\R^{n})$, then 
\begin{align*}
[\sg(u)]_{\hold^{0,\alpha}(\ball(z,R/2);\R^{n\times n})}\leq c [\sg(w)]_{\hold^{0,\alpha}(\ball(z,R/2);\R^{n\times n})}.
\end{align*}
\end{enumerate}
\end{lemma}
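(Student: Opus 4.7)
The plan is to derive the Euler--Lagrange equation, upgrade $u$ to interior smoothness via classical linear elliptic regularity, and then read (a) and (b) off the standard constant-coefficient mean-value and Schauder estimates, invoking Korn's inequality throughout to phrase everything in terms of $\sg(u)$ rather than $\D u$. Existence and uniqueness follow from the direct method: positive definiteness of $\mathcal{A}$ on $\R_{\sym}^{n\times n}$ combined with the $\sobo^{1,2}$--Korn inequality renders $\mathfrak{F}$ coercive and strictly convex on $w+\sobo_{0}^{1,2}(\ball;\R^n)$, and the unique minimiser $u$ satisfies
\begin{equation*}
\int_{\ball}\bigl\langle (\mathcal{A}+\mathcal{A}^{\mathsf T})\sg(u)+b,\,\sg(\varphi)\bigr\rangle\,\dif x=0\qquad\text{for all }\varphi\in \sobo_{0}^{1,2}(\ball;\R^n).
\end{equation*}
This is a constant-coefficient, Legendre--Hadamard elliptic system (the Legendre--Hadamard condition following from $\mathcal{A}$'s positivity together with Korn), so the classical linear theory yields $u\in \hold^\infty(\ball;\R^n)$, and each derivative $\partial_k u$ again solves the homogeneous version.

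For (a), smoothness of $\sg(u)$ and a first-order Taylor expansion give, for $r<R/4$,
\begin{equation*}
\int_{\ball(z,r)}|\sg(u)-(\sg(u))_{z,r}|^2\,\dif x\leq c\,r^{n+2}\sup_{\ball(z,R/4)}|\nabla\sg(u)|^2,
\end{equation*}
while the range $R/4\leq r<R$ is immediate. To control the supremum, I apply the interior mean-value inequality for constant-coefficient elliptic systems to $\D u-C$, where $C\in\R^{n\times n}$ is chosen so that $\sym(C)=(\sg(u))_{z,R/2}$ and the skew part of $C$ makes $u-C(\cdot-z)$ have vanishing rigid-motion average on $\ball(z,R/2)$; the classical $\lebe^2$--Korn inequality then bounds the resulting $\int|\D u-C|^2\,\dif x$ by $c\int|\sg(u)-(\sg(u))_{z,R/2}|^2\,\dif x$, and the trivial estimate $|\nabla\sg(u)|\leq c|\nabla^2 u|$ closes (a).

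For (b), set $\psi:=u-w\in \sobo_{0}^{1,2}(\ball;\R^n)$. Inserting $\sg(u)=\sg(\psi)+\sg(w)$ into the Euler--Lagrange identity gives
\begin{equation*}
\int\bigl\langle (\mathcal{A}+\mathcal{A}^{\mathsf T})\sg(\psi),\sg(\varphi)\bigr\rangle\,\dif x=-\int\bigl\langle (\mathcal{A}+\mathcal{A}^{\mathsf T})\sg(w),\sg(\varphi)\bigr\rangle\,\dif x,
\end{equation*}
the constant $b$--term vanishing on $\sobo_{0}^{1,2}$ by integration by parts. This is a divergence-form, constant-coefficient, Legendre--Hadamard elliptic system whose inhomogeneity $\sg(w)$ is H\"older continuous, so interior Schauder theory delivers $[\sg(\psi)]_{\hold^{0,\alpha}(\ball(z,R/2))}\leq c\,[\sg(w)]_{\hold^{0,\alpha}(\ball(z,R/2))}$, and the triangle inequality $[\sg(u)]_{\hold^{0,\alpha}}\leq[\sg(\psi)]_{\hold^{0,\alpha}}+[\sg(w)]_{\hold^{0,\alpha}}$ concludes (b).

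The main technical obstacle is the Korn step in (a): the mean-value inequality naturally controls $\D u$ modulo a constant, whereas the target bound is phrased through $\sg(u)-(\sg(u))_{z,R/2}$. Choosing the affine correction $P$ so that $u-P$ has vanishing rigid-motion average on $\ball(z,R/2)$, and invoking the $\lebe^2$--Korn inequality in the appropriate form, is the delicate ingredient; a milder variant of the same issue appears in (b) when the classical Schauder estimate, naturally phrased for $\D\psi$, has to be transferred to $\sg(\psi)$.
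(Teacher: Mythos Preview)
Your approach is correct and essentially the same as the paper's: both derive the constant--coefficient Euler--Lagrange system, establish interior smoothness, and then obtain (a) and (b) from mean--value/Caccioppoli type decay combined with Korn's inequality to pass from $\D u$--estimates to $\sg(u)$--estimates. The only difference is one of packaging: the paper works out each ingredient by hand (difference quotients for higher regularity, an explicit Caccioppoli inequality, Campanato iteration for (b)), whereas you invoke the corresponding classical results (interior $\hold^{\infty}$--regularity for constant--coefficient systems, the mean--value inequality, interior Schauder estimates) as black boxes; in particular your identification of the Korn step in (a)---choosing an affine correction with the right skew part so that $\lebe^{2}$--Korn converts $\int|\D u-C|^{2}$ into $\int|\sg(u)-(\sg(u))_{z,R/2}|^{2}$---matches exactly what the paper does in step (iii) of its proof.
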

\begin{proof}
Let $\mathcal{A}\colon\R^{(n\times n)\times (n\times n)}\to\R$ be an elliptic bilinear form. We consider the linear system 
\begin{align}\label{eq:linearisedsystem}
-\di(\mathcal{A}(\sg(u)))=\di(f)
\end{align}
for $f\in\lebe^{2}(\Omega;\R^{n\times n})$. Let $u\in\sobo^{1,2}(\Omega;\R^{N})$ be a weak solution to this equation. \\

(i) Firstly assume that $f\equiv 0$. Let $\ball\Subset\Omega$ be an arbtirary ball and denotes $\frac{1}{2}\ball$ the ball with the same radius but half the radius. We then pick $\rho\in\hold_{c}^{\infty}(\ball;[0,1])$ with $\mathbbm{1}_{\frac{1}{2}\ball}\leq\rho\leq\mathbbm{1}_{\ball}$ and $|\nabla\rho|\leq 4/|\ball|^{\frac{1}{n}}$. Let $s\in\{1,...,n\}$ and define for $|h|<\dista(\ball;\partial\Omega)$ the test function $\varphi:=\Delta_{s,-h}(\rho^{2}\Delta_{s,h}u)$. Testing \eqref{eq:linearisedsystem} with $\varphi$, we obtain after integration by parts for difference quotients, Young's inequality and using ellipticity of $\mathcal{A}$ that
\begin{align*}
\int_{\frac{1}{2}\ball}|\Delta_{s,h}\sg(u)|^{2}\dif x \leq c(\rho)\int_{\ball}|\Delta_{s,h}u|^{2}\dif x.
\end{align*}
By Korn's inequality and the usual difference quotient--type characterisation of $\sobo^{1,2}$, the right side is uniformly bounded in $|h|$ and so is the left side. Passing $|h|\downarrow 0$ yields validity of the previous inequality with $\partial_{s}$ instead of $\Delta_{s,h}$. Note that $\partial_{s}\circ\sg$ is an elliptic first order differential operator and hence possesses a Green's function. In consequence, we obtain $u\in\sobo_{\locc}^{2,2}(\Omega;\R^{n})$ and, noticing that with $u$ also $\partial^{\alpha}u$ is a weak solution of \eqref{eq:linearisedsystem} (recall $f\equiv 0$), we obtain $u\in\sobo_{\locc}^{k,2}(\Omega;\R^{n})$ for any $k\in\mathbb{N}$. \\

(ii) Choosing the same cut--off function $\rho$ as in (i) and putting $\varphi:=\rho^{2}(u-\mathbf{r})$, where $\mathbf{r}\in\mathcal{R}(\ball)$ is an arbitrary rigid deformation, testing \eqref{eq:linearisedsystem} with $\varphi$ and using Young's inequality yields the Caccioppoli--type inequality 
\begin{align}\label{eq:Cacclin}
\int_{\frac{1}{2}\ball}|\sg(u)|^{2}\dif x \leq \frac{C}{R^{2}}\int_{\ball}|u-\mathbf{r}|^{2}\dif x, 
\end{align}
where $C=C(n,|\mathbb{A}|,\ell)>0$ is a constant. Moreover, choosing $\varphi:=\rho^{2}(u-(u)_{R})$, we obtain by use of Korn's inequality in the zero--boundary value version
\begin{align*}
\int_{\Omega}|\rho\D u|^{2}&+2\langle 2\rho\nabla\rho\odot u,\rho\D u\rangle\dif x + |2\rho\nabla\rho\odot u|^{2}\dif x = \int_{\Omega}|\D\,(\rho^{2}(u-(u)_{R})|^{2}\dif x  \\ & \leq \int_{\Omega}|\sg(\rho^{2}(u-(u)_{R}))|^{2}\dif x, 
\end{align*}
from which we deduce 
\begin{align}\label{eq:caccfull}
\int_{\frac{1}{2}\ball}|\D u|^{2}\dif x \leq \frac{C}{|\ball|^{\frac{2}{n}}}\int_{\ball}|u-(u)_{\ball}|^{2}\dif x. 
\end{align}
(iii) By Morrey's embedding, $\|u\|_{\lebe^{\infty}(\ball_{R/2};\R^{n})}\leq c \|u\|_{\sobo^{n,2}(\ball_{R/2};\R^{n})}$. In consequence, iterating steps (i) and (ii) with $\mathbf{r}\equiv 0$, we obtain for $r<R/2$
\begin{align*}
\int_{\ball_{r}}|u|^{2}\dif x \leq cr^{n}\|u\|_{\lebe^{\infty}(\ball_{R/2};\R^{n})}^{2} \leq cr^{n}\|u\|_{\sobo^{n,2}(\ball_{R/2};\R^{n})}^{2} \leq c(R)r^{n}\|u\|_{\lebe^{2}(\ball_{R};\R^{n})}^{2},  
\end{align*}
and a simple rescaling argument yields that $C(R)=C/R^{n}$. Now note that if $u$ is a weak solution, then so is $u-\mathbf{r}$, where $\mathbf{r}\in\mathcal{R}(\Omega)$ is the uniquely determined rigid deformation such that $\int_{\ball}|u-\mathbf{r}|^{2}\dif x \leq c \int_{\ball}|\sg(u)|^{2}\dif x$ (as $\mathbf{r}$ is an affine function, its extension from $\ball$ to $\Omega$ is trivial). In consequence, we obtain by use of the Sobolev--Poincar\'{e} Inequality in its symmetric gradient form
\begin{align*}
\int_{\ball_{r}}|u-\mathbf{r}|^{2}\dif x & \leq r^{2}\int_{\ball_{r}}|\sg(u)|^{2}\dif x \leq c r^{2}\Big(\frac{r}{R}\Big)^{n}\int_{\ball_{R}}|u-\mathbf{r}|^{2}\dif x \leq c \Big(\frac{r}{R}\Big)^{n+2}\int_{\ball_{R}}|u-\mathbf{r}|^{2}\dif x. 
\end{align*}
Moreover, invoking the usual Poincar\'{e} inequality in conjunction with \eqref{eq:caccfull}, we obtain similarly 
\begin{align*}
\int_{\ball_{r}}|u-(u)_{r}|^{2}\dif x \leq c \left(\frac{r}{R}\right)^{n+2}\int_{\ball_{R}}|u-(u)_{R}|^{2}\dif x
\end{align*}
and record that, since with $u$ also $\partial_{s}u$ for any $s\in\{1,...n\}$ is a weak solution, the previous estimate holds true when consequently replacing $u$ by $\partial_{s}u$. Put $w(x):=u(x)-(\sg(u))_{R}x$ and let $\widetilde{\mathbf{r}}\in\mathcal{R}(\Omega)$ be such that $\|w-\widetilde{\mathbf{r}}\|_{\lebe^{2}(\ball_{R};\R^{n})}^{2}\leq CR^{2}\|\sg(w)\|_{\lebe^{2}(\ball_{R};\R^{n\times n})}^{2}$ and put $v:=w-\widetilde{\mathbf{r}}$, $V:=(\sg(u))_{R}+\D\widetilde{\mathbf{r}}$. Then we have for $r\leq \frac{R}{2}$
\begin{align*}
\int_{\ball_{r}}|\sg(u)-(\sg(u))_{r}|^{2}\dif x & \stackrel{a(x):=(\D u)_{r}x}{\leq} \int_{\ball_{r}}|\D\,(u-a)|^{2}\dif x = \int_{\ball_{r}}|\D u -(\D u)_{r}|^{2}\dif x \\
& \leq C \left(\frac{r}{R}\right)^{n+2}\int_{\ball_{R/2}}|\D u - (\D u)_{R/2}|^{2}\dif x \\
&\leq C \left(\frac{r}{R}\right)^{n+2}\int_{\ball_{R/2}}|\D v|^{2}\dif x \leq C\left(\frac{r}{R}\right)^{n+2}\frac{1}{R^{2}}\int_{\ball_{R}}|v|^{2}\dif x  \\
& \leq C\left(\frac{r}{R}\right)^{n+2}\int_{\ball_{R}}|\sg(w)|^{2}\dif x \\ & = C\left(\frac{r}{R}\right)^{n+2}\int_{\ball_{R}}|\sg(u)-(\sg(u))_{R}|^{2}\dif x.
\end{align*}

(iv) We now pass to the case $f\not\equiv 0$. We write $u=v+w$, where $v\in u + \sobo_{0}^{1,2}(\ball_{R};\R^{N})$ and $v$ solves $\di(\mathcal{A}(\sg(v)))=0$ in $\ball_{R}$ and $w$ solves $\di(\mathcal{A}(\sg(w)))=\di(f)$ weakly in $\ball_{R}$. We consequently obtain (as $(\sg(u))_{\ball_{R}}=\argmin_{a\in\R^{n\times n}}\int_{\ball_{R}}|\sg(u)-a|^{2}\dif x$)
\begin{align*}
\int_{\ball_{r}}|\sg(u)-(\sg(u))_{\ball_{r}}|^{2}\dif x & \leq \int_{\ball_{r}}|\sg(u)-(\sg(v))_{\ball_{r}}|^{2}\dif x \\ & \leq 2 \int_{\ball_{r}}|\sg(v)-(\sg(v))_{\ball_{r}}|^{2}\dif x + 2\int_{\ball_{r}}|\sg(w)|^{2}\dif x \\
& \leq c\Big(\frac{r}{R}\Big)^{n+2}\int_{\ball_{R}}|\sg(u)-(\sg(u))_{\ball_{R}}|^{2}\dif x + c\int_{\ball_{r}}|\sg(w)|^{2}\dif x \\
& \leq c\Big(\frac{r}{R}\Big)^{n+2}\int_{\ball_{R}}|\sg(u)-(\sg(u))_{\ball_{R}}|^{2}\dif x + c\int_{\ball_{R}}|f-(f)_{\ball_{R}}|^{2}\dif x. 
\end{align*} 
In particular, if $f$ is constant, then the last term in the previous inequality vanishes identically and the claim follows. \\

(v) We complete the proof by showing (b) and firstly note the weak solutions of the systems
\begin{align*}
(\text{S}_{1})\begin{cases}
-\di(\mathcal{A}(\sg(u)))=0&\;\text{in}\;\Omega,\\
u=w&\;\text{on}\;\partial\Omega
\end{cases},\quad (\text{S}_{2})\begin{cases}
-\di(\mathcal{A}(\sg(\widetilde{u})))=-\di(\mathcal{A}(\sg(w)))&\;\text{in}\;\Omega,\\
\widetilde{u}=0&\;\text{on}\;\partial\Omega
\end{cases},
\end{align*}
are linked by $u=\widetilde{u}-w$. Put $f:=\mathcal{A}(\sg(w))$ in (iv) to deduce for a fixed $0<\tau<1$ with $\Phi(s):=\int_{\ball_{s}}|\sg(u)-(\sg(u))_{r}|^{2}\dif x$
\begin{align*}
\Phi(\tau R)& \leq c\tau^{n+2}\Phi(R)+ cR^{n+2\alpha}[\mathcal{A}(\sg(w))]_{\mathcal{L}^{2,n+2\alpha}(\Omega;\R^{N\times n})}. 
\end{align*}
In this situation, the iteration lemma \cite[Lemma B.3]{Beck} completes the proof by use of the Campanato characterisation $\mathcal{L}^{p,n+\alpha p}\cong\hold^{0,\alpha}$ of H\"{o}lder continuity. 
\end{proof}
\section{Poincar\'{e}--Inequalities}\label{sec:poincare}
\begin{figure}
\centering
\begin{tikzpicture}
\draw [<->] (3.6, -4.0) -- (4.5,-4.0);
\node at (4.1,-4.4) {$\frac{\varepsilon}{\sqrt{n}}$};
\draw [<->] (4.85, -2.7) -- (4.85,-3.6);
\node at (5.07,-3.25) {$\frac{\varepsilon}{\sqrt{n}}$};
\node at (-3.8,-4.0) {$\Gamma_{\varepsilon}:=z+\frac{\varepsilon}{\sqrt{n}}\mathbb{Z}^{n}$};
\node at (0.6,-0.3) {$\Omega$};
\draw [<->] (-1.4, -1.71) -- (-1.885,-3.4);
\node [color=black] at (-1.9,-2.5) {$2\varepsilon$};
\node [ColorPink] at (-1,1.1) {\textbullet};
\node [ColorPink] at (-0.5,1.25) {\textbullet};
\node [ColorPink] at (-0.0,1.30) {\textbullet};
\node [ColorPink] at (0.5,1.25) {\textbullet};
\node [ColorPink] at (1,1.1) {\textbullet};
\node [ColorPink] at (1.4,0.9) {\textbullet};
\node [ColorPink] at (-1.2,-2.4) {\textbullet};
\node [ColorPink] at (-0.93,-2.45) {\textbullet};
\node [ColorPink] at (-0.7,-2.4) {\textbullet};
\node [ColorPink] at (1.4,-2.4) {\textbullet};
\node [ColorPink] at (1.7,-2.33) {\textbullet};
\node [ColorPink] at (2.0,-2.26) {\textbullet};
\node [ColorPink] at (2.3,-2.19) {\textbullet};
\node [ColorPink] at (2.6,-2.05) {\textbullet};
\node [ColorPink] at (2.8,-1.92) {\textbullet};
\draw[step=.9cm,gray,dotted] (-5.0,-4.0) grid (5.0,3.0);
\shadedraw[color=blue, opacity=0.3] plot[smooth cycle,thick] coordinates {
    (0:2.0)
    (25:1.9)
    (50:2.0)
    (75:2.1)
    (90:2.0)
    (100:1.9)
    (110:1.7)
    (120:1.6) 
    (130:1.7)
    (140:2.0)  
    (150:2.2)
    (160:2.4)
    (170:2.5)
    (180:3.0)
    (190:3.2)
    (200:3.7)
    (210:3.5)
    (220:3.5)
    (240:3.0)
    (245:2.9)
    (250:2.8)
    (260:2.7) 
    (270:2.5)
    (275:2.4)
    (280:2.0)
    (290:2.4)
    (300:2.5)
    (305:3.1)
    (310:3.2)
    (320:3.0)    
    (330:3.3)
    (340:3.8)
    (350:4.0)    
  };
\node [color=black] at (-1.9,-2.5) {$2\varepsilon$};
\node [black] at (2.2,1.16) {$S_{\varepsilon}$};
\draw[color=black, opacity=0.5] plot[smooth cycle,thick] coordinates {
    (0:2.0)
    (25:1.9)
    (50:2.0)
    (75:2.1)
    (90:2.0)
    (100:1.9)
    (110:1.7)
    (120:1.6) 
    (130:1.7)
    (140:2.0)  
    (150:2.2)
    (160:2.4)
    (170:2.5)
    (180:3.0)
    (190:3.2)
    (200:3.7)
    (210:3.5)
    (220:3.5)
    (240:3.0)
    (245:2.9)
    (250:2.8)
    (260:2.7) 
    (270:2.5)
    (275:2.4)
    (280:2.0)
    (290:2.4)
    (300:2.5)
    (305:3.1)
    (310:3.2)
    (320:3.0)    
    (330:3.3)
    (340:3.8)
    (350:4.0)    
  };
\draw[line width=50pt, color=ColorPink, opacity=0.6] plot [smooth, tension=0.2] coordinates{
    (135:1.8)
    (140:2.0)
    (150:2.2)
    (160:2.4)
    (170:2.5)
    (180:3.0)
    (190:3.2)
    (200:3.7)
    (210:3.5)
    (220:3.5)
    (240:3.0)};
\draw[line width=50pt, color=ColorPink, opacity=0.6] plot [smooth, tension=0.2] coordinates{
    (330:3.3)
    (340:3.8)
    (350:4.0)
    (0:2.0)
    (25:1.9)};
\draw[line width=50pt, color=ColorPink, opacity=0.6] plot [smooth, tension=0.2] coordinates{
   (260:2.6)
   (265:2.5)
   (270:2.5)
   (275:2.4)
   (280:2.0)
   (290:2.4)
   (295:2.45)};
\end{tikzpicture}
\caption{Construction in the proof of Proposition \ref{prop:convest}}
\end{figure}
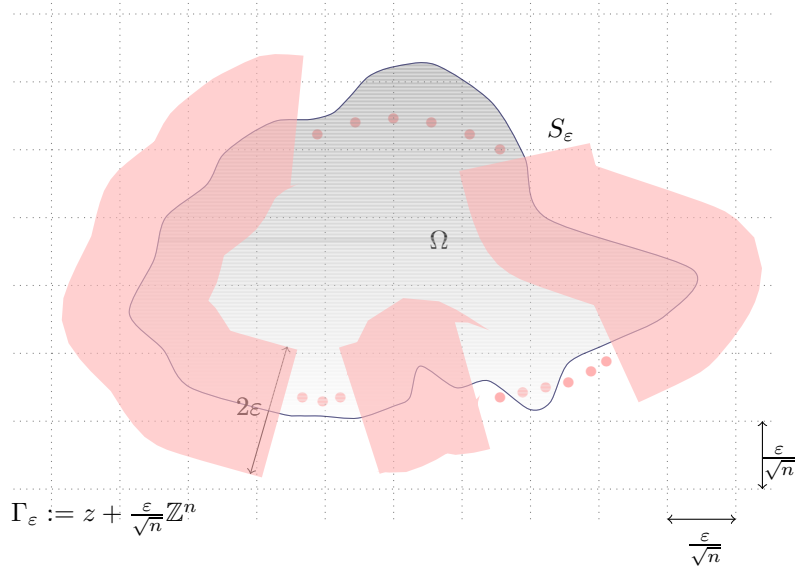
As usual, by a \emph{standard mollifier} we understand a positive, radially symmetric and continuous function $\rho\colon\ball(0,1)\to\R_{\geq 0}$ with $\|\rho\|_{\lebe^{1}(\ball(0,1))}=1$. 
\begin{proposition}\label{prop:convest}
Let $\widetilde{\Omega}\subset\R^{n}$ be open and suppose that $\Omega\Subset\tilde{\Omega}$ is a relatively compact subset. Let $\rho\in\hold_{c}^{\infty}(\ball(0,1);[0,1])$ be a standard mollifier. Then there exists a constant $C>0$ which only depends $\rho$ such that for all $0<\varepsilon<\dist(\Omega,\partial\widetilde{\Omega})$ and all $u\in\bd(\widetilde{\Omega})$ we have 
\begin{align}\label{eq:poincare}
\int_{\Omega}|u-\rho_{\varepsilon}*u|\dif x \leq C\varepsilon\,|\sg(u)|(\overline{N_{\varepsilon}(\Omega)}), 
\end{align}
where $N_{\varepsilon}(\Omega):=\{x\in\tilde{\Omega}\colon\;\dist(x,\partial\Omega)<\varepsilon\}$ is the $\varepsilon$--neighbourhood of $\Omega$. 
\end{proposition}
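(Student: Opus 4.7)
The plan is to perform a cube decomposition of $\Omega$ at scale $\varepsilon$, apply the $\bd$-Poincar\'e inequality (Lemma~\ref{lem:poincare}) on each cube to obtain a rigid deformation that approximates $u$ in $\lebe^{1}$, and exploit the elementary fact that a radially symmetric standard mollifier fixes affine maps pointwise.

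First I would tile $\R^{n}$ by open cubes $\{Q_{i}\}$ of sidelength $\varepsilon/\sqrt{n}$ (hence of diameter $\varepsilon$) and retain those indices $i\in I$ for which $Q_{i}\cap\Omega\neq\emptyset$. For each such $i$, I pick a concentric enlargement $Q_{i}^{*}$ of sidelength $c_{0}(n)\varepsilon$, with $c_{0}(n)$ chosen large enough that $Q_{i}^{*}\supset Q_{i}+\ball(0,\varepsilon)$; after restricting $\varepsilon$ by an $n$-dependent fraction of $\dist(\Omega,\partial\widetilde{\Omega})$, each $Q_{i}^{*}\Subset\widetilde{\Omega}$, the family $\{Q_{i}^{*}\}$ has overlap bounded by some $M=M(n)$, and $\bigcup_{i}\overline{Q_{i}^{*}}\subset\overline{N_{\varepsilon}(\Omega)}$ after absorbing the $n$-dependent dilation into the final constant. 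Lemma~\ref{lem:poincare} applied on $Q_{i}^{*}$ yields a rigid deformation $r_{i}\in\mathcal{R}(Q_{i}^{*})$, extended as an affine map $r_{i}(x)=A_{i}x+b_{i}$ on all of $\R^{n}$, with
$$\|u-r_{i}\|_{\lebe^{1}(Q_{i}^{*};\R^{n})}\leq c(n)\,\varepsilon\,|\sg(u)|(\overline{Q_{i}^{*}}).$$

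The algebraic key is that radial symmetry of $\rho$ implies $\int\rho_{\varepsilon}(y)\,y\,\dif y=0$, hence $\rho_{\varepsilon}*r_{i}\equiv r_{i}$ on $\R^{n}$. For $x\in Q_{i}$, since $\ball(x,\varepsilon)\subset Q_{i}^{*}$,
$$|u(x)-\rho_{\varepsilon}*u(x)|=|(u-r_{i})(x)-\rho_{\varepsilon}*(u-r_{i})(x)|\leq|u-r_{i}|(x)+(\rho_{\varepsilon}*|u-r_{i}|)(x).$$
Integrating over $Q_{i}$, applying Fubini to the convolution term (noting $Q_{i}-\ball(0,\varepsilon)\subset Q_{i}^{*}$ and $\|\rho_{\varepsilon}\|_{\lebe^{1}}=1$) and summing over $i\in I$ gives
$$\int_{\Omega}|u-\rho_{\varepsilon}*u|\,\dif x\leq\sum_{i}2\|u-r_{i}\|_{\lebe^{1}(Q_{i}^{*};\R^{n})}\leq 2c(n)M(n)\,\varepsilon\,|\sg(u)|(\overline{N_{\varepsilon}(\Omega)}),$$
which is the claimed estimate.

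The main obstacle I expect is the geometric book-keeping: picking $c_{0}(n)$ so that the enlarged cubes simultaneously contain $Q_{i}+\ball(0,\varepsilon)$, enjoy controlled overlap, and sit inside $\overline{N_{\varepsilon}(\Omega)}$. A secondary, essential point is the identity $\rho_{\varepsilon}*r_{i}=r_{i}$, which hinges on radial symmetry of the mollifier rather than mere positivity; without it, linear terms in $r_{i}$ would generate an uncontrollable remainder on the right-hand side.
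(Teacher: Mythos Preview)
Your proposal is correct and follows essentially the same strategy as the paper: a cube decomposition at scale $\varepsilon/\sqrt{n}$, the $\bd$--Poincar\'e inequality of Lemma~\ref{lem:poincare} on each cube to produce a rigid deformation, the identity $\rho_{\varepsilon}*r=r$ for affine $r$ coming from radial symmetry of $\rho$, and a bounded--overlap summation. The paper organises the computation slightly differently---it applies Poincar\'e first on the small cube $Q$ and then a second time on the Minkowski enlargement $Q+Q$ via a scaling substitution, and it routes through smooth approximants plus Reshetnyak's continuity theorem at the end---whereas you apply Poincar\'e once directly on the enlarged cube $Q_{i}^{*}$ and work throughout in $\bd$; your variant is marginally cleaner. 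The geometric book--keeping you flag (that $Q_{i}^{*}$ may protrude slightly beyond $N_{\varepsilon}(\Omega)$, so that strictly one arrives at $N_{c(n)\varepsilon}(\Omega)$ on the right and must rescale) is present in the paper's own argument as well and is harmless for the downstream applications.
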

\begin{proof}
We first assume that $u\in(\hold^{1}\cap\ld)(\tilde{\Omega})$ and shall finally argue by strict denseness to obtain estimate \eqref{eq:poincare} for $\bd$--functions. Given a lattice $\Gamma_{z,\varepsilon}:=z+\frac{\varepsilon}{\sqrt{n}}\mathbb{Z}^{n}$ for $z\in\R^{n}$ and $\varepsilon>0$, we denote $\mathcal{Q}$ the collection of all cubes $Q$ which can be written as
\begin{align*}
Q=[\frac{\varepsilon}{\sqrt{n}} k_{1},\frac{\varepsilon}{\sqrt{n}}(k_{1}+1)]\times ... \times [\frac{\varepsilon}{\sqrt{n}} k_{n},\frac{\varepsilon}{\sqrt{n}}(k_{n}+1)],\qquad (k_{1},...,k_{n})\in\mathbb{Z}^{n}
\end{align*}
and have non--empty intersection with $\Omega$. Let us now note that if $\spt(v)\subset Q(x,r)$, then $\spt(\eta_{\varepsilon}*v)\subset Q(x,r+\varepsilon)$. We estimate
\begin{align*}
\int_{\Omega}|u-\rho_{\varepsilon}*u|\dif x & = \int_{\Omega}\left\vert \sum_{Q}\mathbbm{1}_{Q}(u-\rho_{\varepsilon}*u)\right\vert\dif x \\
& =  \int_{\Omega}\left\vert \sum_{Q}\mathbbm{1}_{Q}(u-R_{Q})+\mathbbm{1}_{Q}(\rho_{\varepsilon}*(R_{Q}-u))\right\vert\dif x \\
& \leq \sum_{Q}\int_{Q}|u-R_{Q}| \dif x  + \int_{\Omega}\left\vert \sum_{Q}\mathbbm{1}_{Q}(\rho_{\varepsilon}*(R_{Q}-u))\right\vert\dif x\\
& \leq C\ell(Q)\sum_{Q}\int_{Q}|\sg(u)| \dif x  + \int_{\R^{n}}\left\vert \sum_{Q}\mathbbm{1}_{Q}(\rho_{\varepsilon}*(R_{Q}-u))\right\vert\dif x \\
& \leq C\varepsilon\sum_{Q}\int_{Q}|\sg(u)| \dif x  + \sum_{Q}\int_{\R^{n}}|\mathbbm{1}_{Q}(\rho_{\varepsilon}*(R_{Q}-u))|\dif x \\
& \leq C\varepsilon\int_{\overline{\mathcal{U}_{\varepsilon}}}|\sg(u)| \dif x  + \sum_{Q}\int_{\R^{n}}|\mathbbm{1}_{Q}(\rho_{\varepsilon}*(R_{Q}-u))|\dif x, 
\end{align*}
where here in all of what follows all sums over cubes run over $Q\in\mathcal{Q}$ only. Now, we have due to $\int_{\R^{n}}\rho\dif x=1$
\begin{align*}
\sum_{Q}\int_{\R^{n}}|\mathbbm{1}_{Q}(\rho_{\varepsilon}*(R_{Q}-u))|\dif x & \leq \sum_{Q}\int_{\R^{n}}\int_{\R^{n}}\mathbbm{1}_{Q}(x)\rho_{\varepsilon}(x-y)|R_{Q}(y)-u(y)|\dif y\dif x\\
& \leq \sum_{Q}\int_{\R^{n}}\int_{\R^{n}}\mathbbm{1}_{Q}(y+z)\rho_{\varepsilon}(z)|R_{Q}(y)-u(y)|\dif y\dif z\\
& \leq \sum_{Q}\int_{Q+\ball(0,\varepsilon)}|R_{Q}(y)-u(y)|\dif y\\
& \leq \sum_{Q}\int_{Q+Q}|R_{Q}(y)-u(y)|\dif y. 
\end{align*}
On the other hand, by Poincar\'{e}'s inequality, Lemma \ref{lem:poincare},
\begin{align*}
\int_{Q+Q}|R_{Q}(y)-u(y)|\dif y & = b^{n}\int_{Q}|(R_{Q}-u)(b\xi)|\dif\xi\\
& \leq C b^{n+1}\ell(Q)\int_{Q}|\sg(u)(b\xi)|\dif\xi \leq C \ell(Q)\int_{Q+Q}|\sg(u)(y)|\dif y, 
\end{align*}
where $C>0$ only depends on $n$. In consequence, we obtain by $\ell(Q)=\varepsilon$
\begin{align*}
\sum_{Q}\int_{\R^{n}}|\mathbbm{1}_{Q}(\rho_{\varepsilon}*(R_{Q}-u))|\dif x \leq C\sum_{Q}\varepsilon\int_{Q+Q}|\sg(u)(y)|\dif y.
\end{align*}
We now claim that there exists a constant $C=C(n)>0$ such that 
\begin{align}\label{eq:counting}
\sum_{Q}\int_{Q+Q}|\sg(u)(y)|\dif y\leq C\int_{\mathcal{N}_{\varepsilon}}|\sg(u)|\dif x. 
\end{align}
Indeed, for any $\widetilde{Q}\in\Gamma_{\varepsilon}$, there exist at most $k(n):=n^{3}+1$ cubes $\widetilde{Q}^{1},...,\widetilde{Q}^{k(n)}$ that touch $\widetilde{Q}$. Therefore, 
\begin{align*}
\sum_{Q}\int_{Q+Q}|\sg(u)(y)|\dif y & =\sum_{\widetilde{Q}}\sum_{j=1}^{k(n)}\int_{\widetilde{Q}^{j}}|\sg(u)(y)|\dif y=\sum_{j=1}^{k(n)}\sum_{\widetilde{Q}}\int_{\widetilde{Q}^{j}}|\sg(u)(y)|\dif y\\
& \leq \sum_{j=1}^{k(n)}\int_{\mathcal{N}_{\varepsilon}}|\sg(u)|\dif x\leq C\int_{\mathcal{N}_{\varepsilon}}|\sg(u)|\dif x
\end{align*}
and thus \eqref{eq:counting} follows. Collecting estimates, the proof is complete for $u\in (\hold^{1}\cap\ld)(\Omega)$. In the general case, we use the Reshetnyak continuity theorem \ref{thm:resh} in conjunction with the area--strict approximation lemma \ref{lem:area} to easily conclude the proof\footnote{Here it is important to use smooth approximation for the area--strict and \emph{not} the strict topology as Reshetnyak's Theorem only gives lower semicontinuity for the latter.}. 
\end{proof}
\begin{remark}
The inequality just proved needs to be put in the context of the usual Poincar\'{e} inequality, Lemma \ref{lem:poincare}. Without going into the above proof, suppose that the right side of \eqref{eq:poincare} vanishes so that $\sg(u)\equiv 0$ $\mathscr{L}^{n}$--a.e. and hence $u\in\mathcal{R}(N_{\varepsilon}(\Omega))$. Then the left side is zero too as convolutions with standard mollifiers leaves harmonic functions unchanged. On the other hand, it does not seem fruitful to write out the convolution with $\rho_{\varepsilon}$ since in this situation, 
\begin{align*}
|u(x)-\rho_{\varepsilon}*u(x)|=\left\vert\int_{\R^{n}}(u(x-y)-u(x))\rho_{\varepsilon}(y)\dif y\right\vert, 
\end{align*}
and in light of Ornstein's Non--Inequality, the attempt to bound the integral of this expression against the $\lebe^{1}$--norm of $\sg(u)$ is not clear to us provided we wish to produce $\varepsilon$ on the right side of \eqref{eq:poincare} instead of $\varepsilon^{s}$ with $0<s<1$. The latter corresponds to the embedding $\bd_{\locc}\hookrightarrow\sobo_{\locc}^{s,1}$ for $0<s<1$ as proved in \cite{GK}, Prop. 2.2, however, this seems to be not good enough for the partial regularity proof below. 
\end{remark}
A straightforward modification of the above arguments using Jensen's inequality gives the following
\begin{corollary}\label{cor:main}
Let $\widetilde{\Omega}\subset\R^{n}$ be open and suppose that $\Omega\Subset\tilde{\Omega}$ is a relatively compact subset and suppose $g\colon \R\to\R$ is a convex function of linear growth with $g(0)=0$ and which satisfies $g(\xi+\eta)\leq c(g(\xi)+g(\eta))$ for all $\xi,\eta\in\R$ with a fixed $c>0$. Let $\rho\in\hold^{\infty}(\ball(0,1);[0,1])$ be a standard mollifier. Then there exists a constant $C>0$ which only depends $\rho$ such that for all $0<\varepsilon<\dist(\Omega,\partial\widetilde{\Omega})$ and all $u\in\bd(\widetilde{\Omega})$ we have 
\begin{align*}
\int_{\Omega}g(|u-\rho_{\varepsilon}*u|)\dif x \leq C\varepsilon\,g(|\sg(u)|)(\overline{N_{\varepsilon}(\Omega)}), 
\end{align*}
where $g(|\sg(u)|)$ needs to be understood in the sense of convex functions of measures, see section \ref{sec:cfom}. 
\end{corollary}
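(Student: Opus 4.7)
The plan is to mimic the argument of Proposition \ref{prop:convest}, replacing absolute values by $g$ composed with absolute values and inserting Jensen's inequality in the convolution step. First I will reduce to $u\in(\hold^{1}\cap\ld)(\widetilde{\Omega})$ via the area--strict smooth approximation; the $\bd$--case will be recovered at the end by Reshetnyak's Continuity Theorem (Theorem \ref{thm:resh}) applied to the convex integrand $\xi\mapsto g(|\xi|)$ of linear growth on $\R_{\sym}^{n\times n}$.

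For smooth $u$, I will tile $\widetilde{\Omega}$ by the cubes $\mathcal{Q}$ of sidelength $\varepsilon/\sqrt{n}$ as in the Proposition and, on each $Q\in\mathcal{Q}$ intersecting $\Omega$, choose an $\lebe^{1}$--best rigid deformation $R_{Q}\in\mathcal{R}(Q)$ provided by Lemma \ref{lem:poincare}. Radial symmetry of $\rho$ together with the affinity of $R_{Q}$ yields $\rho_{\varepsilon}*R_{Q}=R_{Q}$ pointwise, hence on $Q$ the decomposition $u-\rho_{\varepsilon}*u=(u-R_{Q})+\rho_{\varepsilon}*(R_{Q}-u)$ holds. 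Applying $g$ and using the quasi--triangle hypothesis on $g$ together with the monotonicity of $g$ on $\R_{\geq 0}$ gives
\begin{align*}
g(|u-\rho_{\varepsilon}*u|)\leq c\bigl[g(|u-R_{Q}|)+g(|\rho_{\varepsilon}*(R_{Q}-u)|)\bigr]\qquad\text{on }Q.
\end{align*}
For the convolution term, Jensen's inequality applied to the probability density $\rho_{\varepsilon}(x-\cdot)$, combined with the monotonicity of $g$, produces the pointwise bound $g(|\rho_{\varepsilon}*v|)\leq\rho_{\varepsilon}*g(|v|)$; integrating and tracking supports exactly as in the Proposition yields
\begin{align*}
\int_{Q}g(|\rho_{\varepsilon}*(R_{Q}-u)|)\dif x\leq\int_{Q+Q}g(|R_{Q}-u|)\dif y.
\end{align*}

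The remaining quantity $\int_{Q+Q}g(|R_{Q}-u|)\dif y$ I will control by comparison with the $\lebe^{1}$--Poincar\'{e} step of the Proposition. The hypotheses on $g$ force $g(t)\approx t$ on $\R_{\geq 0}$: the linear growth lower bound gives $g(t)\geq c_{1}t$, while convexity together with $g(0)=0$ and $g(t)\leq c_{2}(1+t)$ yields $g(t)\leq 2c_{2}t$ (for $t\geq 1$ directly from the upper bound; for $t\in[0,1]$ from the convex combination $g(t)\leq tg(1)+(1-t)g(0)=tg(1)\leq 2c_{2}t$). Thus the Proposition's cube estimate $\int_{Q+Q}|R_{Q}-u|\dif y\leq C\ell(Q)\int_{Q+Q}|\sg(u)|\dif y$ transfers, up to multiplicative constants depending on $c_{1},c_{2}$, to the desired $\int_{Q+Q}g(|R_{Q}-u|)\dif y\leq C\varepsilon\int_{Q+Q}g(|\sg(u)|)\dif y$. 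Summing over $Q\in\mathcal{Q}$ and invoking the counting lemma from the Proposition completes the smooth case, and passing to general $u\in\bd(\widetilde{\Omega})$ by Theorem \ref{thm:resh} finishes the proof. The main (mild) obstacle is this $g$--Poincar\'{e} step; the comparison $g(t)\approx t$ reduces it here to Lemma \ref{lem:poincare}, whereas without the two--sided linear growth assumption one would have to extract it from a pointwise Strauss--type representation $|u(x)-R_{Q}(x)|\lesssim\int_{Q}|\sg(u)(y)||x-y|^{1-n}\dif y$ followed by Jensen's inequality on the probability measure $|x-\cdot|^{1-n}/\int_{Q}|x-y|^{1-n}\dif y$.
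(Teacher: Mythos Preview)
Your argument is correct and mirrors the paper's one-line sketch: rerun the cube decomposition of Proposition \ref{prop:convest} with Jensen's inequality inserted at the convolution step, then pass to $\bd$ via area--strict approximation and Reshetnyak. Your additional observation that the stated hypotheses force $c_{1}t\leq g(t)\leq 2c_{2}t$ on $[0,\infty)$ in fact permits an even shorter route---one may simply sandwich $g(|\cdot|)$ between constant multiples of $|\cdot|$ on both sides of Proposition \ref{prop:convest} (noting that $g^{\infty}(s)\in[c_{1}s,c_{2}s]$ takes care of the singular part of $\sg(u)$), so the cube argument need not be rerun at all; the Jensen-based derivation you carry out would only become essential under the weaker one-sided growth hypothesis you flag in your final remark.
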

\section{Partial Regularity}\label{sec:pr}
We now provide the proof of Theorem \ref{thm:1}, following the strategy outlined at the beginning of the present chapter. As an important convention, we put 
\begin{align}
X:=\begin{cases}
\bd &\;\text{if}\;p=1,\\
\sobo^{1,p} &\;\text{if}\;1<p<\infty
\end{cases}.
\end{align}
Moreover, let us arrange that if integrals of the form $\int_{U}f(\sg(u))\dif x$ with a convex function $f\colon\R_{\sym}^{n\times n}\to\R$ of linear growth and $u\in\bd(U)$ appear, then these integrals are tacitly understood in the sense of functions of measures, see section \ref{sec:cfom}. 
\subsection{The Excess}\label{sec:excess}
In the subsequent paragraphs we shall work with two different excess quantities. Keeping in mind that $1\leq p<\infty$ is fixed throughout, we put 
\begin{align}\label{eq:defe}
e(t):=(1+|t|^{2})^{p/2}-1,\;\;\;t\in\R
\end{align}
and define for $m\in\mathbb{N}$ the function $\mathbf{e}\colon \R^{m}\to\R_{0}^{+}$ for $\mathbf{A}\in\R^{M}$ by $\mathbf{e}(\mathbf{A}):=e(|\mathbf{A}|)$. Given $u\in X(\Omega)$, $z\in\Omega$ and $R>0$ such that $\ball(z,R)\subset\Omega$, we define 
\begin{align}
\excenew(u;z,R):=\int_{\ball(z,R)}\mathbf{e}(\sg(u)-(\sg(u))_{z,R})\dif x\;\;\;\text{and}\;\;\;\excesso(u;z,R):=\frac{\excenew(u;z,R)}{R^{n}}.  
\end{align}
For future reference we collect some preliminary estimates on $\mathbf{e}$. 
\begin{lemma}[\cite{AG}, Prop. 2.5]\label{lem:eest}
Let $1\leq p<\infty$ and define $\mathbf{e}$ by \eqref{eq:defe}. 
\begin{itemize}
\item[\emph{(i)}] There exists a positive constant $c=c(p)$ such that for all $\mathbf{A},\mathbf{B}\in\R^{M}$ and all $t>0$ there holds 
\begin{align*}
\mathbf{e}(t\mathbf{A})\leq c\,\max\{t^{2},t^{p}\}\mathbf{e}(\mathbf{A})\;\;\text{and}\;\;\mathbf{e}(\mathbf{A}+\mathbf{B})\leq c\,(\mathbf{e}(\mathbf{A})+\mathbf{e}(\mathbf{B})). 
\end{align*}
\item[\emph{(ii)}] Given $L>0$, there exists positive constants $C_{1}=C_{1}(L,p)$ and $C_{2}=C_{2}(L,p)$ such that for all $\mathbf{A}\in\R^{M}$ with $|\mathbf{A}|\leq L$ there holds 
\begin{align*}
C_{1}|\mathbf{A}|^{2}\leq \mathbf{e}(\mathbf{A}) \leq C_{2}|\mathbf{A}|^{2}. 
\end{align*}
\end{itemize}
\end{lemma}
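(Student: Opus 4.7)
My plan is to derive both parts from the sharper two-sided bound
\[
e(s) \approx s^{2}(1+s^{2})^{p/2-1},\qquad s\geq 0,
\]
with comparison constants depending only on $p$. This equivalence encodes both the quadratic behaviour of $e$ near the origin and its $p$-th power growth at infinity, and is strong enough to prove (i) and (ii) in a unified fashion.

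To establish the equivalence, I would use the integral representation $e(s) = \int_{0}^{s^{2}}(p/2)(1+u)^{p/2-1}\,du$, which follows from $e(0)=0$ and $e'(s) = ps(1+s^{2})^{p/2-1}$. The integrand is monotone in $u$ (increasing when $p\geq 2$, decreasing when $p\leq 2$), attains the value $p/2$ at $u=0$ and the value $(p/2)(1+s^{2})^{p/2-1}$ at $u=s^{2}$; bounding the integral above and below by these endpoint values in each regime of $p$ gives matching estimates up to constants depending only on $p$.

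Once the equivalence is in hand, part (ii) is immediate since the factor $(1+|\mathbf{A}|^{2})^{p/2-1}$ is pinched between two positive constants depending only on $L$ and $p$ whenever $|\mathbf{A}|\leq L$. For the subadditivity in (i) I use the monotonicity $e'\geq 0$ and $|\mathbf{A}+\mathbf{B}|\leq 2\max\{|\mathbf{A}|,|\mathbf{B}|\}$ to reduce the estimate to the doubling property $e(2s)\leq Ce(s)$, which is immediate from the equivalence since $(1+4s^{2})^{p/2-1}$ and $(1+s^{2})^{p/2-1}$ agree up to a multiplicative constant uniformly in $s\geq 0$. For the scaling inequality the equivalence yields
\[
\frac{e(ts)}{e(s)} \lesssim t^{2}\left(\frac{1+t^{2}s^{2}}{1+s^{2}}\right)^{p/2-1},
\]
and a short case split on the sign of $p/2-1$ together with whether $t\leq 1$ or $t\geq 1$ shows the parenthesised factor is bounded by $\max\{1,t^{p-2}\}$, giving the desired $\max\{t^{2},t^{p}\}$.

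The only delicate point I anticipate is uniform treatment of the two exponent regimes $p\geq 2$ and $1\leq p<2$: the auxiliary factor $(1+s^{2})^{p/2-1}$ is non-decreasing in the first case and non-increasing in the second, so in the integral-representation step the identification of which endpoint yields the tight upper bound and which yields the tight lower bound flips between the regimes, and this has to be bookkept carefully. Once the equivalence $e(s)\approx s^{2}(1+s^{2})^{p/2-1}$ is established, the remaining manipulations are mechanical.
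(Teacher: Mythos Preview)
The paper does not give its own proof of this lemma: it is stated with the attribution ``[\textsc{Anzellotti--Giaquinta}, Prop.~2.5]'' and no argument is supplied, so there is nothing in the paper to compare your proposal against. Your approach via the two-sided bound $e(s)\approx s^{2}(1+s^{2})^{p/2-1}$ is correct and yields both (i) and (ii) exactly as you describe; the case analysis on the sign of $p/2-1$ and on $t\lessgtr 1$ for the scaling estimate goes through as written.

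One small caveat worth recording: the pure endpoint bounds from the integral representation only give \emph{one} side of the equivalence directly in each regime (for instance, when $p<2$ the crude upper bound $e(s)\leq (p/2)s^{2}$ is not controlled by $s^{2}(1+s^{2})^{p/2-1}$ for large $s$). You flag this as ``the only delicate point'', and indeed a one-line refinement suffices---either split the integral at $s^{2}/2$, or simply observe that the ratio
\[
\frac{e(s)}{s^{2}(1+s^{2})^{p/2-1}}=\frac{(1+s^{2})-(1+s^{2})^{1-p/2}}{s^{2}}
\]
is continuous, positive, and has finite positive limits as $s\to 0$ and $s\to\infty$, hence is bounded above and below by positive constants depending only on $p$. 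With that in place the rest of your argument is complete.
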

\subsection{Decay of Comparison Maps}\label{sec:comp}
The purpose of this section is to provide decay estimates for H\"{o}lder continuous function which, in addition, satisfy a certain smallness condition. Later on in sections \ref{sec:molli} and \ref{sec:decay} we shall prove that suitable mollifications of minimisers match these conditions and thereby make the results of this section available. Throughout this paragraph we fix $\mathbf{m}\in\R_{\sym}^{n\times n}$, $\sigma>0$ and assume that $f\in \hold^{2}(\R_{\sym}^{n\times n})$ satisfies 
\begin{align}
\lambda|\mathbf{Z}|^{2}\leq \langle f''(\mathbf{m})\mathbf{Z},\mathbf{Z}\rangle \leq\Lambda|\mathbf{Z}|^{2}. 
\end{align}
for some $0<\lambda\leq\Lambda<\infty$ and all $\mathbf{Z}\in E$. Moreover, we assume that there exists a bounded and non--decreasing function $\omega\colon \R_{0}^{+}\to\R_{0}^{+}$ with $\lim_{t\searrow 0}\omega(t)=0$ such that for all $\mathbf{m}'\in\mathbb{B}(\mathbf{m},\sigma)$ we have 
\begin{align}\label{eq:modcon}
|f''(\mathbf{m})-f''(\mathbf{m}')|\leq\omega(|\mathbf{m}-\mathbf{m}'|). 
\end{align}
Finally, for $0<r<R$ and $v\in \hold^{1,\alpha}(\overline{\ball(z,r)};\R^{n})$ we put 
\begin{align*}
&\devi(v;z,r):=\int_{\ball(z,r)}f(\sg(v))-\inf\left\{\int_{\ball(z,r)}f(\sg(w))\dif x\colon\begin{array}{c}
w\in \hold^{1,\alpha}(\ball(z,r);\R^{n}) \\ w = v\;\text{on}\;\partial\ball(z,r)
\end{array}\right\},\\
&\mathbf{t}(v;z,r):=\sup_{\ball(z,r)}|\sg(v)-\mathbf{m}|+2^{\alpha}r^{\alpha}[\sg(v)]_{C^{0,\alpha}(\ball(z,r);\R^{n\times n})}
\end{align*}
Notice that $\devi$ is an indicator of how far $v$ is away from minimising the variational integral $\mathscr{F}$ restricted to $\ball(z,r)$. Besides, the function $\mathbf{t}$ will prove useful to find the mentioned smallness condition which is necessary to infer the decay estimate of the H\"{o}lder continuous comparison maps. Now we have 
\begin{proposition}\label{prop:1}
Fix $0<\alpha<1$. Then there exists $c_{1}>0$ such that the following holds: If $v\in \hold^{1,\alpha}(\ball(z,R/2);\R^{n})$ satisfies $\mathbf{t}(R/2)<\min\{\sigma/c_{1},1\}$, then there exists a bounded, non--decreasing function $\vartheta\colon\R_{0}^{+}\to\R_{0}^{+}$ with $\lim_{t\searrow 0}\vartheta(t)=0$ such that for all $0<r<R/2$ we have 
\begin{align}\label{eq:smoothes}
\begin{split}
\int_{\ball(z,r)}|\sg(v)-(\sg(v))_{z,r}|^{2}\dif x &\lesssim \left(\frac{r}{R}\right)^{n+2}\int_{\ball(z,R/2)}|\sg(v)-(\sg(v))_{z,R/2}|^{2}\dif x \\
& +\vartheta(\mathbf{t}(R/2))\int_{\ball(z,R/2)}|\sg(v)-\mathbf{m}|^{2}\dif x \\ & + \devi(v;z,R/2)
\end{split}
\end{align}
provided $1\leq p\leq 2$ and 
\begin{align}\label{eq:smoothes1}
\begin{split}
\int_{\ball(z,r)}|\sg(v)-(\sg(v))_{z,r}|^{p}\dif x &\lesssim \left(\frac{r}{R}\right)^{n+p}\int_{\ball(z,R/2)}|\sg(v)-(\sg(v))_{z,R/2}|^{p}\dif x \\
& +\vartheta(\mathbf{t}(R/2))\int_{\ball(z,R/2)}|\sg(v)-\mathbf{m}|^{2}\dif x \\ & + \devi(v;z,R/2)
\end{split}
\end{align}
provided $p\geq 2$. 
\end{proposition}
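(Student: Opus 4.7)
The strategy I would pursue is the classical harmonic-comparison technique of Anzellotti--Giaquinta and Fuchs--Seregin: on $\ball(z,R/2)$, compare $v$ with the unique weak solution $h \in v + \sobo_{0}^{1,2}(\ball(z,R/2);\R^{n})$ of the frozen constant-coefficient linear Dirichlet problem
\begin{equation*}
-\di\bigl(f''(\mathbf{m})\sg(h)\bigr) = 0\quad\text{in }\ball(z,R/2),\qquad h = v\;\text{on}\;\partial\ball(z,R/2).
\end{equation*}
Ellipticity of $f''(\mathbf{m})$ guarantees existence and uniqueness, and up-to-boundary Schauder theory gives $h\in \hold^{1,\alpha}(\overline{\ball(z,R/2)};\R^{n})$, so $h$ is an admissible competitor in the infimum defining $\devi(v;z,R/2)$. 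Lemma \ref{lem:lineardecayestimates}(a) then supplies the $L^{2}$-decay for $\sg(h)$, and for $p\geq 2$ a Campanato-type iteration exploiting the smoothness of the constant-coefficient system lifts this to the $(r/R)^{n+p}$-decay in $L^{p}$.

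Decomposing $\sg(v)-(\sg(v))_{z,r} = (\sg(h)-(\sg(h))_{z,r}) + (\sg(v-h)-(\sg(v-h))_{z,r})$ and applying the triangle inequality in the relevant power $q\in\{2,p\}$, the $\sg(h)$-term is absorbed by the linear decay after swapping $\sg(h)$ for $\sg(v)$ up to an error of the form $\int|\sg(v-h)|^{q}$, while the other term is dominated by $\int_{\ball(z,R/2)}|\sg(v-h)|^{q}$. For $p\geq 2$ the a priori bounds $|\sg(v)-\mathbf{m}|\leq \mathbf{t}(R/2)\leq 1$ and $|\sg(h)-\mathbf{m}|\leq c_{1}\mathbf{t}(R/2)$ (established below) make $|\sg(v-h)|$ uniformly bounded, so $\int|\sg(v-h)|^{p}\lesssim \int|\sg(v-h)|^{2}$. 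Hence the entire argument reduces to proving the single comparison estimate
\begin{equation*}
\int_{\ball(z,R/2)}|\sg(v-h)|^{2}\dif x \;\lesssim\; \vartheta(\mathbf{t}(R/2))\int_{\ball(z,R/2)}|\sg(v)-\mathbf{m}|^{2}\dif x + \devi(v;z,R/2).
\end{equation*}

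To establish this I introduce the frozen quadratic $Q(\xi):=\tfrac{1}{2}\langle f''(\mathbf{m})(\xi-\mathbf{m}),\xi-\mathbf{m}\rangle$ and the Taylor remainder $\mathcal{R}(\xi):=f(\xi)-f(\mathbf{m})-\langle f'(\mathbf{m}),\xi-\mathbf{m}\rangle - Q(\xi)$, which by \eqref{eq:modcon} satisfies $|\mathcal{R}(\xi)|\leq c\,\omega(|\xi-\mathbf{m}|)|\xi-\mathbf{m}|^{2}$ whenever $|\xi-\mathbf{m}|<\sigma$. Testing the Euler--Lagrange equation for $h$ against $v-h\in\sobo_{0}^{1,2}$ and invoking the exact second-order expansion for the quadratic $Q$, together with $\int\sg(v-h)=0$ (zero boundary trace) to kill the affine cross-term, gives
\begin{equation*}
\tfrac{\lambda}{2}\int|\sg(v-h)|^{2}\dif x \;\leq\; \int\bigl[Q(\sg(v))-Q(\sg(h))\bigr]\dif x.
\end{equation*}
Since $f$ and $Q$ differ by an affine function whose integral against $\sg(v-h)$ vanishes, the right-hand side equals
\begin{equation*}
\int\bigl[f(\sg(v))-f(\sg(h))\bigr]\dif x \;-\; \int\mathcal{R}(\sg(v))\dif x \;+\; \int\mathcal{R}(\sg(h))\dif x.
\end{equation*}
The first bracket is at most $\devi(v;z,R/2)$ by admissibility of $h$, and $|\int\mathcal{R}(\sg(v))|\leq c\,\omega(\mathbf{t}(R/2))\int|\sg(v)-\mathbf{m}|^{2}$ is immediate from the pointwise bound $\|\sg(v)-\mathbf{m}\|_{\infty}\leq \mathbf{t}(R/2)<\sigma$.

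The main remaining obstacle is controlling $\int\mathcal{R}(\sg(h))$, which requires a pointwise bound $\|\sg(h)-\mathbf{m}\|_{L^{\infty}}\leq c_{1}\mathbf{t}(R/2)$; this constant $c_{1}$ is precisely the one appearing in the hypothesis $\mathbf{t}(R/2)<\sigma/c_{1}$. Minimality of $h$ for $\int Q(\sg(\cdot))\dif x$ immediately yields the $L^{2}$-version $\int|\sg(h)-\mathbf{m}|^{2}\leq (\Lambda/\lambda)\int|\sg(v)-\mathbf{m}|^{2}$. To upgrade to $L^{\infty}$ one can invoke boundary Schauder for the constant-coefficient elliptic system satisfied by $\sg(h)-\mathbf{m}$ with Hölder data $\sg(v)|_{\partial\ball}-\mathbf{m}$; alternatively — and more cheaply — it suffices to prove the decay for $r<R/4$ (the band $R/4\leq r<R/2$ is absorbed into constants), in which range interior Schauder applied to $\sg(h)-\mathbf{m}$ on $\ball(z,R/4)$ yields $\|\sg(h)-\mathbf{m}\|_{L^{\infty}(\ball(z,R/4))}\lesssim R^{-n/2}\|\sg(h)-\mathbf{m}\|_{L^{2}(\ball(z,R/2))}\lesssim \mathbf{t}(R/2)$. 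Once $|\sg(h)-\mathbf{m}|<\sigma$ holds pointwise on the relevant ball, the quadratic remainder bound delivers $|\int\mathcal{R}(\sg(h))|\leq c\,\omega(c_{1}\mathbf{t}(R/2))\int|\sg(v)-\mathbf{m}|^{2}$, and shrinking the smallness threshold further so that $c\,\omega(c_{1}\mathbf{t}(R/2))\leq \lambda/4$ allows absorption of any $\int|\sg(v-h)|^{2}$ term that appears en route. Setting $\vartheta(t):=C\bigl(\omega(t)+\omega(c_{1}t)\bigr)$ — bounded, non-decreasing, with $\lim_{t\searrow 0}\vartheta(t)=0$ — closes the comparison estimate and hence both \eqref{eq:smoothes} and \eqref{eq:smoothes1}.
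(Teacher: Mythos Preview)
Your approach is essentially identical to the paper's: both compare $v$ with the minimiser $h$ of the frozen quadratic functional on $\ball(z,R/2)$, invoke Lemma~\ref{lem:lineardecayestimates}(a) for the decay of $\sg(h)$, and split the energy gap as $\int[g(\sg(v))-f(\sg(v))]+\int[f(\sg(v))-f(\sg(h))]+\int[f(\sg(h))-g(\sg(h))]$ (your $Q$ and $\mathcal{R}$ are the paper's $g$ and $f-g$ up to the affine part, which drops out for the same reason you note). The paper obtains the crucial $L^{\infty}$-bound $\sup_{\ball(z,R/2)}|\sg(h)-\mathbf{m}|\leq c_{0}\,\mathbf{t}(v;z,R/2)$ exactly via your first option, namely boundary Schauder in the form of Lemma~\ref{lem:lineardecayestimates}(b) combined with the $L^{2}$-comparison $\|\sg(h)-\mathbf{m}\|_{L^{2}}\lesssim\|\sg(v)-\mathbf{m}\|_{L^{2}}$.

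One caveat: your ``cheaper'' alternative of restricting to $r<R/4$ and using only interior Schauder on $\ball(z,R/4)$ does not work as stated. The remainder integral $\int\mathcal{R}(\sg(h))$ is taken over the full ball $\ball(z,R/2)$ --- this is forced by the global identity $\tfrac{\lambda}{2}\int_{\ball(z,R/2)}|\sg(v-h)|^{2}\leq\int_{\ball(z,R/2)}[Q(\sg(v))-Q(\sg(h))]$, which uses $v-h\in\sobo_{0}^{1,2}(\ball(z,R/2))$ --- so you need $|\sg(h)-\mathbf{m}|<\sigma$ pointwise on all of $\ball(z,R/2)$, not just on $\ball(z,R/4)$. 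The boundary Schauder route is therefore not optional here.
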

\begin{proof}
The strategy of the proof is as follows: First, passing to the second order Taylor polynomial of the integrand $f$ we obtain an integrand $g$ of quadratic growth to whose minimisers we may apply the classical decay estimates for linear elliptic systems. Then we show that the conditions of the present proposition are sufficient for these decay estimates to carry over to $v$ in a way such that \eqref{eq:smoothes} follows. Moreover, we shall only prove Eq. \eqref{eq:smoothes} only; Eq. \eqref{eq:smoothes1} follows in the same way, and we will indicate where slight changes must be incorporated. 

We therefore begin by defining the auxiliary integrand $g\colon\mathbb{B}(\mathbf{m},\sigma)\to\R$ through 
\begin{align*}
g(\mathbf{Z}):=f(\mathbf{m})+\langle f'(\mathbf{m}),(\mathbf{Z}-\mathbf{m})\rangle+\frac{1}{2}\langle f''(\mathbf{m})(\mathbf{Z}-\mathbf{m}),(\mathbf{Z}-\mathbf{m})\rangle,\;\;\;\mathbf{Z}\in\mathbb{B}(\mathbf{m},\sigma).  
\end{align*}
Using a Taylor expansion of $f$ up to order two around $\mathbf{m}$, we deduce by Eq. \eqref{eq:modcon} that 
\begin{align}\label{eq:Taylorsecond}
|f(\mathbf{Z})-g(\mathbf{Z})|\lesssim \omega(|\mathbf{Z}-\mathbf{m}|)|\mathbf{Z}-\mathbf{m}|^{2},\;\;\;\mathbf{Z}\in\mathbb{B}(\mathbf{m},\sigma). 
\end{align}
By Lemma \ref{lem:lineardecayestimates}, $h\in \hold^{1,\alpha}(\ball(z,R/2);\R^{n})$ of the auxiliary minimisation problem 
\begin{align}\label{eq:auxprob}
\text{to minimise}\;\int_{\ball(z,R/2)}g(\sg(w))\dif x\;\;\text{among all}\;w\in W_{v}^{1,2}(\ball(z,R/2);\R^{n}), 
\end{align}
where $W_{v}^{1,2}(\ball(z,R/2);\R^{n})=v+\sobo_{0}^{1,2}(\ball(z,R/2);\R^{n})$. In consequence, we have
\begin{align}\label{eq:compare}
\int_{\ball(z,r)}|\adop (h)-(\adop (h))_{z,r}|^{2}\dif x \lesssim \left(\frac{r}{R}\right)^{n+2}\int_{\ball(z,R/2)}|\adop (h)-(\adop (h))_{z,R/2}|^{2}\dif x
\end{align}
for all $0<r<R/2$. Moreover, Lemma \ref{lem:lineardecayestimates}~(b) gives
\begin{align}\label{eq:Schauder}
[\sg(h)]_{\hold^{0,\alpha}(\ball(z,R/2);\R^{n\times n})}\lesssim [\sg(v)]_{\hold^{0,\alpha}(\ball(z,R/2);\R^{n\times n})}.
\end{align}
We will now compare $v$ with $h$. To this end, we first notice that
\begin{align*}
\int_{\ball(z,r)}|\sg(v)-(\sg(v))_{z,r}|^{2}\dif x & \leq \int_{\ball(z,r)}|\sg(v)-\sg(h)|^{2}\dif x \\ 
& + \int_{\ball(z,r)}|\sg(h)-(\sg(h))_{z,r}|^{2}\dif x \\
& + \int_{\ball(z,r)}|(\sg(h))_{z,r}-(\sg(v))_{z,r}|^{2}\dif x =:\mathbf{I}+\mathbf{II}+\mathbf{III}.
\end{align*}
We shall estimate $\mathbf{II}$ through \eqref{eq:compare}. Keeping this in mind, we turn to the remaining two terms $\mathbf{I}$ and $\mathbf{III}$. We have 
\begin{align*}
\mathbf{I}+\mathbf{III}& \lesssim  \int_{\ball(z,r)}|\sg(v)-\sg(h)|^{2}\dif x +  \int_{\ball(z,r)}\dashint_{\ball(z,r)}|\sg(v)-\sg(h)|^{2}\dif y \dif x\\
& \lesssim \int_{\ball(z,r)}|\sg(v)-\sg(h)|^{2}\dif x, 
\end{align*}
and by the minimality property of $h$ we deduce through Eq. \eqref{eq:compare} that 
\begin{align*}
\mathbf{II} &\lesssim \left(\frac{r}{R}\right)^{n+2}\int_{\ball(z,R/2)}|\adop (h)-(\adop (h))_{z,R/2}|^{2}\dif x \\ & \lesssim \left(\frac{r}{R}\right)^{n+2}\int_{\ball(z,R/2)}|\adop (v)-(\adop (v))_{z,R/2}|^{2}\dif x. 
\end{align*}
At this stage we note that the case $p\geq 2$ is covered by replacing estimate \eqref{eq:compare} by 
\begin{align*}
\int_{\ball(z,r)}|\adop (h)-(\adop (h))_{z,r}|^{p}\dif x \lesssim \left(\frac{r}{R}\right)^{n+p}\int_{\ball(z,R/2)}|\adop (h)-(\adop (h))_{z,R/2}|^{p}\dif x
\end{align*}
for all $0<r<R/2$; the rest of the proof then goes along the same lines as for $1\leq p\leq 2$. Turning back to $1\leq p\leq 2$ and combining the estimates for $\mathbf{I},\mathbf{II}$ and $\mathbf{III}$, we need to estimate 
\begin{align*}
\int_{\ball(z,R/2)}|\sg(v)-\sg(h)|^{2}\dif x & \lesssim \int_{\ball(z,R/2)}\langle f''(\mathbf{m})(\sg(v)-\sg(h)),(\sg(v)-\sg(h))\rangle\dif x \\ 
& \lesssim \int_{\ball(z,R/2)}g(\sg(v))-f(\sg(v))\dif x  \\ &+ \int_{\ball(z,R/2)}f(\sg(v))-f(\sg(h))\dif x \\
& + \int_{\ball(z,R/2)}f(\sg(h))-g(\sg(h))\dif x =: \mathbf{I}_{1}+\mathbf{I}_{2}+\mathbf{I}_{3}. 
\end{align*}
The integrals $\mathbf{I}_{1},\mathbf{I}_{2},\mathbf{I}_{3}$ will be estimated separately. Among them, we readily obtain through the definition of $\devi$ and $h$ that $\mathbf{I}_{2}\leq \devi(v;z,R/2)$. Now notice that $h$ solves the auxiliary problem \eqref{eq:auxprob} and hence we have that $\|\sg(h)-\mathbf{m}\|_{\lebe^{2}(\ball(z,R/2);\R^{n\times n})} \lesssim \|\sg(v)-\mathbf{m}\|_{\lebe^{2}(\ball(z,R/2);\R^{n\times n})}$. Therefore we deduce that 
\begin{align*}
\sup_{\ball(z,R/2)}|\sg(h)-\mathbf{m}| & \leq \sup_{\ball(z,R/2)}|\sg(h)-(\sg(h))_{z,R/2}| + \sup_{\ball(z,R/2)}|(\sg(h))_{z,R/2}-\mathbf{m}|\\
& \lesssim R^{\alpha}[\sg(h)]_{\hold^{0,\alpha}(\ball(z,R/2);\R^{Nn})}+\left(\dashint_{\ball(z,R/2)}|\sg(h)-\mathbf{m}|^{2}\dif x \right)^{\frac{1}{2}}\\
& \lesssim R^{\alpha}[\sg(v)]_{\hold^{0,\alpha}(\ball(z,R/2);\R^{Nn})}+\left(\dashint_{\ball(z,R/2)}|\sg(v)-\mathbf{m}|^{2}\dif x \right)^{\frac{1}{2}}\\
& \lesssim R^{\alpha}[\sg(v)]_{\hold^{0,\alpha}(\ball(z,R/2);\R^{Nn})}+\sup_{\ball(z,R/2)}|\sg(v)-\mathbf{m}|\\
& =: c_{0}\mathbf{t}(v;z,R/2),
\end{align*}
where $c_{0}=c_{0}(\lambda,\Lambda,n,N,M)>0$ is a constant. At this point we make our definition of $c_{1}>0$ as it appears in the assumptions of the present proposition by putting $c_{1}:=c_{0}$. Then, by assumption, we have that $\sg(h)(x)\in\mathbb{B}(\mathbf{m},\sigma)$ provided $x\in \ball(z,R/2)$ and so $|f(\sg(h)(x))-g(\sg(h)(x))|\lesssim \omega(|\sg(h)(x)-\mathbf{m}|)|\sg(h)(x)-\mathbf{m}|^{2}$ for all $x\in\ball(z,R/2)$ by Eq. \eqref{eq:Taylorsecond}. In consequence,
\begin{align*}
\int_{\ball(z,R/2)}f(\sg(h))-g(\sg(h))\dif x \lesssim \omega(c_{0}\mathbf{t}(v;z,R/2))\int_{\ball(z,R/2)}|\sg(h)-\mathbf{m}|^{2}\dif x. 
\end{align*}
The remaining integral $\mathbf{I}_{3}$ is estimated as follows: By assumption, we have the estimate $\mathbf{t}(v;z,R/2)<\min\{\sigma/c_{1},1\}$ so that it holds $\sg(v)(x)\in\mathbb{B}(\mathbf{m},\sigma)$ for all $x\in\ball(z,R/2)$. Referring to eq. \eqref{eq:Taylorsecond} we then conclude 
\begin{align*}
\int_{\ball(z,R/2)}g(\sg(v))-f(\sg(v))\dif x \leq \omega(\mathbf{t}(v;z,R/2))\int_{\ball(z,R/2)}|\sg(v)-\mathbf{m}|^{2}\dif x. 
\end{align*}
Collecting estimates, the claim follows. 
\end{proof}
\subsection{Mollification}\label{sec:molli}
In the following, let $\eta\in \hold_{c}^{\infty}(\R^{n};[0,1])$ denote the standard mollifier 
\begin{align*}
\eta(x):=\tilde{c}\mathbbm{1}_{\ball(0,1)}(x)\exp\left(-\frac{1}{|x|^{2}-1}\right),\;\;\;x\in\R^{n}
\end{align*}
so that $\spt(\eta)\subset\overline{\ball(0,1)}$, with a constant $\tilde{c}>0$ such that $\|\eta\|_{L^{1}}=1$. Given $u\in L_{\locc}^{1}(\Omega;\R^{N})$ and $\delta,\varepsilon>0$ we define the mollifications $u_{\delta}$ and $u_{\varepsilon}$ by 
\begin{align*}
u_{\delta}(x):=\frac{1}{\delta^{n}}\int_{\ball(x,\delta)}\eta\left(\frac{x-y}{\delta}\right)u(y)\dif y,\;\;\;u_{\varepsilon}(x):=\dashint_{\ball(x,\varepsilon)}u(y)\dif y, 
\end{align*}
so that $u_{\delta}$ is defined on $\{x\in\Omega\colon\,\dista(x,\partial\Omega)>\delta\}$ and $u_{\varepsilon}$ on $\{x\in\Omega\colon\,\dista(x,\partial\Omega)>\varepsilon\}$. Finally, we put 
$u_{\delta,\varepsilon}:=(u_{\varepsilon})_{\delta}$ which, in consequence, is defined on $\{x\in\Omega\colon\;\dista(x,\partial\Omega)>\delta+\varepsilon\}$. In the next step, we prove that the smallness of the excess guarantees that $\delta$ and $\varepsilon$ can be adjusted in a way such that Proposition \ref{prop:1} applies to $u_{\delta,\varepsilon}$. 
\begin{lemma}\label{lem:adjust}
Let $u\in X(\ball(x_{0},r))$ and put $\mathbf{m}:=(\sg(u))_{\ball(x_{0},r)}$. Fix $0<\alpha<1$ and suppose that $\excesso(u;x_{0},r)<1$. Then there exist $0<\gamma<\frac{1}{n+2\alpha}$ and $\beta>0$ such that if 
\begin{align}\label{eq:deltaeps}
\delta = \varepsilon = \frac{1}{16}r\excesso(u;x_{0},r)^{\gamma}, 
\end{align}
then $\mathbf{t}(u;x_{0},r/2)\lesssim \excesso(u;x_{0},r)^{\beta}$. 
\end{lemma}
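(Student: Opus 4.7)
The plan is to exploit the convolution representation of $\sg(u_{\delta,\varepsilon})$ and to estimate both its sup- and Hölder-norms on $\ball(x_{0},r/2)$ via kernel estimates combined with a good/bad decomposition of the deviation measure. Set $\Phi := \eta_\delta * (\mathbbm{1}_{\ball(0,\varepsilon)}/|\ball(0,\varepsilon)|)$, so that $u_{\delta,\varepsilon}=\Phi*u$ (in the appropriate sense), $\spt\Phi\subset\ball(0,\delta+\varepsilon)$, and $\int\Phi=1$. Since $\excesso(u;x_{0},r)<1$, the prescribed choice $\delta=\varepsilon=(r/16)\excesso^{\gamma}$ gives $\delta+\varepsilon<r/8$, so for $x\in\ball(x_{0},r/2)$ the convolution only sees values of $\sg u$ on $\ball(x_{0},r)$, and because $\mathbf{m}$ is constant
\begin{equation*}
\sg(u_{\delta,\varepsilon})(x)-\mathbf{m} = \bigl[\Phi*(\sg u - \mathbf{m}\mathscr{L}^{n})\bigr](x),\qquad \nabla\sg(u_{\delta,\varepsilon})(x) = \bigl[\nabla\Phi*(\sg u - \mathbf{m}\mathscr{L}^{n})\bigr](x).
\end{equation*}
Scaling of the mollifier gives $\|\Phi\|_{L^{\infty}}\lesssim \delta^{-n}$, $\|\Phi\|_{L^{2}}\lesssim\delta^{-n/2}$, and analogously $\|\nabla\Phi\|_{L^{\infty}}\lesssim\delta^{-n-1}$, $\|\nabla\Phi\|_{L^{2}}\lesssim\delta^{-n/2-1}$.

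The hard part will be to extract a sharp enough estimate on the deviation measure to produce the exponent $1/(n+2\alpha)$ rather than the naive $1/(2(n+\alpha))$. I would decompose
\begin{equation*}
(\sg u - \mathbf{m}\mathscr{L}^{n})\res\ball(x_{0},r) = (\mathscr{E}u-\mathbf{m})\mathbbm{1}_{G}\,\mathscr{L}^{n} + (\mathscr{E}u-\mathbf{m})\mathbbm{1}_{G^{c}}\,\mathscr{L}^{n} + E^{s}u,
\end{equation*}
with $G:=\{|\mathscr{E}u-\mathbf{m}|\leq 1\}$ and the third summand relevant only when $p=1$. Lemma \ref{lem:eest}(ii) with $L=1$ produces the $L^{2}$-bound $\|\mathscr{E}u-\mathbf{m}\|_{L^{2}(G\cap\ball(x_{0},r))}^{2}\lesssim \excenew$, while the pointwise bound $|A|\lesssim\mathbf{e}(A)$ from Lemma \ref{lem:eest}(i) for $|A|\geq 1$, together with $\mathbf{e}^{\infty}(A)=|A|$ when $p=1$, gives the total-variation estimate $\|(\mathscr{E}u-\mathbf{m})\mathbbm{1}_{G^{c}}\|_{L^{1}} + |E^{s}u|(\ball(x_{0},r))\lesssim \excenew$. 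Pairing $\Phi$ in $L^{2}$ against the small part and in $L^{\infty}$ against the large and singular parts via Hölder/Young, and analogously for $\nabla\Phi$, yields
\begin{align*}
\|\sg(u_{\delta,\varepsilon})-\mathbf{m}\|_{L^{\infty}(\ball(x_{0},r/2))} &\lesssim (r/\delta)^{n/2}\excesso^{1/2} + (r/\delta)^{n}\excesso,\\
\|\nabla\sg(u_{\delta,\varepsilon})\|_{L^{\infty}(\ball(x_{0},r/2))} &\lesssim \delta^{-1}\bigl[(r/\delta)^{n/2}\excesso^{1/2} + (r/\delta)^{n}\excesso\bigr].
\end{align*}

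The Hölder seminorm follows by the standard interpolation trick: splitting according to $|x-y|\leq\delta$ (mean value via $\|\nabla\sg(u_{\delta,\varepsilon})\|_\infty$) and $|x-y|\geq\delta$ (sup-norm via $2\|\sg(u_{\delta,\varepsilon})-\mathbf{m}\|_\infty$) produces $[\sg(u_{\delta,\varepsilon})]_{C^{0,\alpha}(\ball(x_{0},r/2))}\lesssim \delta^{1-\alpha}\|\nabla\sg(u_{\delta,\varepsilon})\|_{\infty} + \delta^{-\alpha}\|\sg(u_{\delta,\varepsilon})-\mathbf{m}\|_{\infty}$, so that multiplication by $r^{\alpha}$ gives
\begin{equation*}
r^{\alpha}[\sg(u_{\delta,\varepsilon})]_{C^{0,\alpha}(\ball(x_{0},r/2))} \lesssim (r/\delta)^{n/2+\alpha}\excesso^{1/2} + (r/\delta)^{n+\alpha}\excesso.
\end{equation*}
Inserting $\delta=(r/16)\excesso^{\gamma}$ reduces $\mathbf{t}(u_{\delta,\varepsilon};x_{0},r/2)$ (which is dominated by the Hölder contribution since $r/\delta>1$) to a sum $\lesssim \excesso^{1/2-\gamma(n/2+\alpha)} + \excesso^{1-\gamma(n+\alpha)}$; both exponents are strictly positive precisely when $\gamma<1/(n+2\alpha)$ (the binding first-term condition, observing $1/(n+2\alpha)<1/(n+\alpha)$). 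Fixing any such $\gamma$ and setting $\beta:=\min\{\tfrac{1}{2}-\gamma(\tfrac{n}{2}+\alpha),\,1-\gamma(n+\alpha)\}>0$ completes the argument.
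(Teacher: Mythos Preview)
Your argument is correct and arrives at the same binding constraint $\gamma<1/(n+2\alpha)$ and the same exponent $\beta=\tfrac12-\gamma(\tfrac n2+\alpha)$ as the paper (your second exponent $1-\gamma(n+\alpha)$ is always the larger one under this constraint, so the minimum is redundant).

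The route, however, is genuinely different. The paper does \emph{not} split $\sg u-\mathbf m$ into a good set, a bad set and a singular part; instead it exploits the two-stage structure $u_{\delta,\varepsilon}=(u_\varepsilon)_\delta$. At the $\varepsilon$-averaging stage one applies Jensen directly to the convex function $\mathbf e$ (understood as a function of measures when $p=1$) to obtain the pointwise bound
\[
\mathbf e\bigl(\sg(u_\varepsilon)(x)-\mathbf m\bigr)\;\le\;\dashint_{\ball(x,\varepsilon)}\mathbf e(\sg u-\mathbf m)\;\le\;\Bigl(\frac r\varepsilon\Bigr)^{n}\excesso,
\]
which already places $\sg(u_\varepsilon)-\mathbf m$ in $L^\infty$ with the right power of $\excesso$. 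The $\delta$-mollification then contributes only the elementary $L^\infty\!\to\!C^{0,\alpha}$ convolution estimate, giving
\[
\mathbf t(u_{\delta,\varepsilon};x_0,r/2)\;\lesssim\;\bigl(1+(r/\delta)^{\alpha}\bigr)\sup_{\ball(x_0,r/2+\delta)}|\sg(u_\varepsilon)-\mathbf m|.
\]
Convexity of $\mathbf e$ thus absorbs your good/bad/singular decomposition in a single stroke. What your approach buys is that it never invokes the specific order of the two mollifications and would work for any kernel with the stated $L^2$/$L^\infty$ scaling; what the paper's approach buys is brevity and a transparent reason why $\mathbf e$ (rather than $|\cdot|$ or $|\cdot|^2$) is the natural excess density in this problem.
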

\begin{proof}
First observe that as a consequence of the elementary estimates for convolutions we obtain 
\begin{align}\label{eq:step}
\mathbf{t}(u_{\delta,\varepsilon};x_{0},\frac{r}{2}) & \leq c\,\left(1+\left(\frac{r}{\delta}\right)^{\alpha}\right)\sup_{x\in\ball(x_{0},\frac{r}{2}+\delta)}|\sg(u_{\varepsilon})-\mathbf{m}|.\end{align}
Now recall that $\varepsilon$ and $\delta$ are adjusted according to \eqref{eq:deltaeps}. Then, by Jensen's inequality and a change of variables we deduce 
\begin{align*}
\mathbf{e}(\sg(u_{\varepsilon})(x)-\mathbf{m}) & \leq \dashint_{\ball(x,\varepsilon)}\mathbf{e}(\sg(u)-\mathbf{m})\dif y = \left(\frac{r}{\varepsilon}\right)^{n}\excesso(u;x_{0},r)\\
& \leq 16^{n}\excesso(u;x_{0},r)^{1-n\gamma} \leq 16^{n}, 
\end{align*}
the last estimate being valid due to our assumption $\excesso(u;x_{0},r)<1$. However, for any $K>0$ there exists a constant $c=c(K)$ such that if $|\mathbf{A}|\leq K$, then $|\mathbf{A}|^{2}\leq c\,\mathbf{e}(\mathbf{A})$. Applying this with $K=16^{n}$, we obtain for all $x\in\ball(x_{0},\frac{\delta}{2}+r)$ that
\begin{align*}
|\sg(u_{\varepsilon})(x)-\mathbf{m}|^{2} \lesssim \mathbf{e}(\sg(u_{\varepsilon})(x)-\mathbf{m})\lesssim \excesso(u;x_{0},r)^{1-n\gamma}. 
\end{align*}
Now observe that by \eqref{eq:step} we have by the specific choice of $\delta=\frac{1}{16}r\excesso(u;x_{0},r)^{\gamma}$ that 
\begin{align*}
\mathbf{t}(u_{\delta,\varepsilon};x_{0},r/2) & \leq c\,\left(1+\left(\excesso(u;x_{0},r)\right)^{-\gamma\alpha}\right)\excesso(u;x_{0},r)^{\frac{1-n\gamma}{2}}\lesssim \excesso(u;x_{0},r)^{\beta}
\end{align*}
with $\beta:=\frac{1}{2}(1-n\gamma)-\gamma\alpha>0$ (recall that $0<\gamma<1/(n+2\alpha)$). This proves the claim.  
\end{proof}
\begin{corollary}\label{cor:adjust}
In the situation of Lemma \ref{lem:adjust}, we have 
\begin{align*}
&\int_{\ball(x_{0},r/2)}|\sg(u_{\delta,\varepsilon})-\mathbf{m}|^{2}\dif x \lesssim \excenew(u;x_{0},r),\;\;\;\text{and}\\&\int_{\ball(x_{0},r/2)}|\sg(u_{\delta,\varepsilon})-(\sg(u_{\delta,\varepsilon}))_{\ball(x_{0},r/2)}|^{2}\dif x \lesssim \excenew(u;x_{0},r).
\end{align*}
\end{corollary}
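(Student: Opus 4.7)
The plan is to first establish the auxiliary bound
\[
\int_{\ball(x_{0},r/2)}\mathbf{e}(\sg(u_{\delta,\varepsilon})-\mathbf{m})\dif x \lesssim \excenew(u;x_{0},r),
\]
and then to derive both claims from it. For the first claim, I would exploit that the work carried out in the proof of Lemma \ref{lem:adjust} already yields $|\sg(u_{\delta,\varepsilon})(x)-\mathbf{m}|\lesssim \excesso(u;x_{0},r)^{\beta}\leq 1$ uniformly on $\ball(x_{0},r/2)$, so that Lemma \ref{lem:eest}(ii) gives $|\sg(u_{\delta,\varepsilon})-\mathbf{m}|^{2}\lesssim \mathbf{e}(\sg(u_{\delta,\varepsilon})-\mathbf{m})$ pointwise on that ball; integrating and invoking the auxiliary bound delivers the first estimate. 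The second estimate is then immediate from the fact that the mean minimises the squared $\lebe^{2}$-distance, $\int_{\ball}|a-(a)_{\ball}|^{2}\dif x \leq \int_{\ball}|a-\mathbf{m}|^{2}\dif x$, applied with $a=\sg(u_{\delta,\varepsilon})$.

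To produce the auxiliary bound I would use that symmetric differentiation commutes with convolution and set $\psi_{\delta,\varepsilon}:=\eta_{\delta}\ast\phi_{\varepsilon}$, where $\phi_{\varepsilon}:=\mathbbm{1}_{\ball(0,\varepsilon)}/|\ball(0,\varepsilon)|$, obtaining a nonnegative probability kernel supported in $\ball(0,\delta+\varepsilon)$ with
\[
\sg(u_{\delta,\varepsilon})(x)-\mathbf{m}=\int\psi_{\delta,\varepsilon}(x-y)\,\dif\bigl(\sg(u)-\mathbf{m}\mathscr{L}^{n}\bigr)(y).
\]
For $1<p<\infty$, where $\sg(u)\in\lebe^{p}$, Jensen's inequality applied to the convex function $\mathbf{e}(\cdot-\mathbf{m})$ against the probability measure $\psi_{\delta,\varepsilon}(x-\cdot)\dif y$, followed by Fubini's theorem, gives $\int_{\ball(x_{0},r/2)}\mathbf{e}(\sg(u_{\delta,\varepsilon})-\mathbf{m})\dif x\leq \int_{\ball(x_{0},r/2+\delta+\varepsilon)}\mathbf{e}(\sg(u)-\mathbf{m})\dif x$. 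Since the choice \eqref{eq:deltaeps} together with $\excesso(u;x_{0},r)<1$ forces $\delta+\varepsilon<r/8$, the right-hand side is taken over a ball strictly contained in $\ball(x_{0},r)$, and is therefore bounded by $\excenew(u;x_{0},r)$.

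In the $\bd$ regime $p=1$, where $\sg(u)$ is only a finite $\R_{\sym}^{n\times n}$-valued Radon measure, I would first select $(u_{j})\subset(\hold^{\infty}\cap\ld)(\ball(x_{0},r))$ approximating $u$ in the area-strict sense via the area-strict smooth approximation lemma, apply the previous inequality to each $u_{j}$, and then pass to the limit: pointwise convergence of the mollifications takes care of the left-hand side, while Lemma \ref{lem:area} applied to the convex linear-growth integrand $\xi\mapsto\mathbf{e}(\xi-\mathbf{m})$ handles the right-hand side, the latter being understood in the sense of convex functions of measures. I expect this $p=1$ passage to the limit to be the only mildly delicate step: convexity and linear growth of the shifted integrand $\mathbf{e}(\cdot-\mathbf{m})$ are precisely what Lemma \ref{lem:area} requires, so the approximation argument goes through without further friction.
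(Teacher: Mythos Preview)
Your proof is correct and follows essentially the same approach as the paper: use the pointwise bound on $\sg(u_{\delta,\varepsilon})-\mathbf{m}$ coming from Lemma~\ref{lem:adjust} together with Lemma~\ref{lem:eest}(ii) to pass from $|\cdot|^{2}$ to $\mathbf{e}$, then apply Jensen's inequality and the containment $\ball(x_{0},r/2+\delta+\varepsilon)\subset\ball(x_{0},r)$ to bound by $\excenew(u;x_{0},r)$, with the second estimate following from minimality of the mean. The only difference is that the paper handles the case $p=1$ more directly by applying Jensen to the measure $\sg(u)-\mathbf{m}\mathscr{L}^{n}$ itself (using the standing convention at the start of Section~\ref{sec:pr} that such integrals are read as convex functions of measures), so your area-strict approximation detour is unnecessary---and note that, as literally stated, Lemma~\ref{lem:area} asks for the lower bound in \eqref{eq:lingrowth}, which the shifted integrand $\mathbf{e}(\cdot-\mathbf{m})$ fails when $\mathbf{m}\neq 0$; this is not a genuine obstruction (Reshetnyak continuity does not need that lower bound), but the paper's direct route sidesteps the issue entirely.
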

\begin{proof}
We recall from Lemma \ref{lem:adjust} that $\mathbf{t}(u_{\delta,\varepsilon};x_{0},r/2)\leq C \excesso(u;x_{0},r)^{\beta}\leq C$. In particular, there holds $\sup_{x\in\ball(x_{0},r/2)}|\sg(u_{\delta,\varepsilon})-\mathbf{m}|\leq C$. Therefore we have $|\sg(u_{\delta,\varepsilon})(x)-\mathbf{m}|^{2}\leq c\,\mathbf{e}(\sg(u_{\delta,\varepsilon})-\mathbf{m})$ with a constant $c>0$ which only depends on $C$ and $\mathbf{e}$ for every $x\in\ball(x_{0},r/2)$. We conclude by Jensen's Inequality that
\begin{align*}
\int_{\ball(x_{0},r/2)}|\sg(u_{\delta,\varepsilon})-\mathbf{m}|^{2}\dif x & \lesssim \int_{\ball(x_{0},r/2)}\mathbf{e}(\sg(u_{\delta,\varepsilon})-\mathbf{m}) \\
& \lesssim \int_{\ball(x_{0},r/2)}(\mathbf{e}(\sg(u)-\mathbf{m}))_{\delta,\varepsilon}\dif x = \excenew(u;x_{0},r), 
\end{align*}
the last estimate being valid due to $\frac{r}{2}+\delta+\varepsilon<r$ by assumption. The second inequality directly follows from this and hence the complete statement of the corollary. 
\end{proof}
\subsection{Decay Estimate}\label{sec:decay}
Having proved suitable decay estimates for smooth comparison maps, we turn now to the proof of how these inherit to the actual minimiser. This is accomplished by introducing a new variational integrand, essentially the first order Taylor approximation of the integrand $f$, and studying a closely related minimisation problem for which the comparison argument can be carried out conveniently. Combining the results of this section with the decay estimates of the mollified minimisers given in the preceding section, we will be in position to deduce the decay estimate for the minimiser itself.

To do so, we first make some definitions. Let $\mathbf{m}:=(\sg(u))_{\ball(z,R)}$ and $\sigma>0$. For $\mathbf{m}'\in\mathbb{B}(\mathbf{m},\sigma)$ we define a new convex integrand $\tilde{f}$ by 
\begin{align}\label{eq:tildef}
\tilde{f}(\mathbf{m}''):=f(\mathbf{m}'+\mathbf{m}'')-f(\mathbf{m}')-\langle f'(\mathbf{m}'),\mathbf{m}''\rangle,\;\;\;\mathbf{m}''\in \R_{\sym}^{n\times n}.
\end{align}
For further reference we record that $\tilde{f}$ is convex as well and satisfies both $\tilde{f}(0)=0$ and $\tilde{f}'(0)=0$. Moreover, since $f\approx\mathbf{e}$, we also have $\tilde{f}\approx\mathbf{e}$. Given $w\colon \ball(z,R)\to \R^{n}$ and $\mathbf{m}'\in \R_{\sym}^{n\times n}$, we define $\tilde{w}\colon\ball(z,R)\to \R^{n}$ by 
\begin{align}\label{eq:tildew}
\tilde{w}(x):=w(x)-\mathbf{m}'(x-z).
\end{align} 
We shall now provide three auxiliary statements which occupy a central position in the proof of the decay estimate. However, we stress that the following preparatory lemmas do \emph{not} take into account minimality of $u$ but strictly rest on the convexity assumption imposed on $f$. In particular, this is the case for Lemma \ref{lem:convex} whose generalisation to, e.g., quasiconvex functionals lacks a proof. 
\begin{lemma}
Let $u,v\in X(\ball(z,R))$ such that $u=v$ on $\partial\!\ball(z,R)$ in the sense of traces and define $\tilde{f}$ and $\tilde{u},\tilde{v}$ by \eqref{eq:tildef} and \eqref{eq:tildew}, respectively. Then there holds 
\begin{align*}
\int_{\ball(z,R)}f(\sg(u))-f(\sg(v))\dif x = \int_{\ball(z,R)}\tilde{f}(\sg(\tilde{u}))-\tilde{f}(\sg(\tilde{v}))\dif x
\end{align*}
\end{lemma}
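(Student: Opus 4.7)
The strategy is to expand the right-hand side using the definition of $\tilde f$, observe that all the nonlinear data cancel in the $u$-versus-$v$ difference, and then dispose of a single remaining linear correction via the Gauss--Green formula \eqref{eq:trace}.

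First I would note that, since $\mathbf{m}' \in \R_{\sym}^{n\times n}$ is constant, the affine map $x\mapsto \mathbf{m}'(x-z)$ has smooth symmetric gradient identically equal to $\mathbf{m}'$; consequently, as $\R_{\sym}^{n\times n}$-valued Radon measures on $\ball(z,R)$,
\[
\E\tilde w = \E w - \mathbf{m}'\mathscr{L}^n,\qquad \mathscr{E}\tilde w = \mathscr{E}w - \mathbf{m}',\qquad \E^s\tilde w = \E^s w
\]
for $w \in \{u,v\}$, while the trace equality $\trace(\tilde u - \tilde v) = \trace(u-v) = 0$ on $\partial\ball(z,R)$ is preserved.

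Next I would plug these into \eqref{eq:tildef} to obtain pointwise $\tilde f(\mathscr{E}\tilde w) = f(\mathscr{E} w) - f(\mathbf{m}') - \langle f'(\mathbf{m}'),\, \mathscr{E}w - \mathbf{m}'\rangle$, and compute the recession function of $\tilde f$ from \eqref{eq:recession}:
\[
\tilde f^\infty(\xi) = \lim_{t\searrow 0}\bigl( tf(\mathbf{m}'+\xi/t) - tf(\mathbf{m}') - \langle f'(\mathbf{m}'),\xi\rangle \bigr) = f^\infty(\xi) - \langle f'(\mathbf{m}'),\xi\rangle,
\]
where the linear growth \eqref{eq:lingrowth} forces $tf(\mathbf{m}'+\xi/t) \to f^\infty(\xi)$ and $tf(\mathbf{m}') \to 0$ as $t\searrow 0$. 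Interpreting both sides of the claimed identity as convex functions of measures in the sense of Section \ref{sec:cfom} and forming the $u$-versus-$v$ difference of the resulting expressions, the constants $f(\mathbf{m}')$ and $\langle f'(\mathbf{m}'),\mathbf{m}'\rangle$ cancel pairwise, while the linear corrections on the absolutely continuous and singular parts reassemble into a single integral against $\dif\E(u-v)$, giving
\[
\int_{\ball(z,R)} \bigl(\tilde f(\sg\tilde u) - \tilde f(\sg\tilde v)\bigr)\dif x = \int_{\ball(z,R)} \bigl(f(\sg u) - f(\sg v)\bigr)\dif x - \int_{\ball(z,R)} \langle f'(\mathbf{m}'),\, \dif\E(u-v)\rangle.
\]

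To finish, I would apply the Gauss--Green formula \eqref{eq:trace} to $w := u-v \in \bd(\ball(z,R))$ with the constant test matrix $\varphi \equiv f'(\mathbf{m}')$: since $\di(\varphi) \equiv 0$ and $\trace(w) = 0$ on $\partial\ball(z,R)$ by hypothesis, both the bulk lower-order term and the boundary integral in \eqref{eq:trace} vanish, so the correction integral is zero. Although \eqref{eq:trace} is literally stated for $\varphi \in \hold_c^\infty(\R^n;\R_{\sym}^{n\times n})$, this is immaterial: one either multiplies $f'(\mathbf{m}')$ by a cutoff identically $1$ on a neighbourhood of $\overline{\ball(z,R)}$, or invokes the standard extension of \eqref{eq:trace} to test matrices smooth up to $\overline{\ball(z,R)}$. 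The only nontrivial point of the argument is the identification of $\tilde f^\infty$ and the equality $\E^s\tilde w = \E^s w$; both reduce to the observation that the recession function is blind to affine perturbations. Everything else is bookkeeping plus one application of Gauss--Green.
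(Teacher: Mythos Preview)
Your proposal is correct and follows essentially the same approach as the paper: expand $\tilde f$ via its definition, observe that the nonlinear parts reduce to $f(\sg u)-f(\sg v)$, and kill the residual linear term $\int\langle f'(\mathbf m'),\dif\E(u-v)\rangle$ by Gauss--Green using $\di(f'(\mathbf m'))=0$ and $\trace(u-v)=0$. The only difference is that you spell out the measure-theoretic bookkeeping (the identity $\tilde f^{\infty}(\xi)=f^{\infty}(\xi)-\langle f'(\mathbf m'),\xi\rangle$ and $\E^{s}\tilde w=\E^{s}w$) that the paper leaves implicit in its convention that such integrals are tacitly interpreted in the sense of Section~\ref{sec:cfom}.
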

\begin{proof}
Writing out the definitions, we obtain 
\begin{align*}
\int_{\ball(z,R)}\tilde{f}(\sg(\tilde{u}))-\tilde{f}(\sg(\tilde{v}))\dif x & = \int_{\ball(z,R)}f(\mathbf{m}'+\sg(\tilde{u}))- f(\mathbf{m}'+\sg(\tilde{v}))\dif x \\
& - \int_{\ball(z,R)} f'(\mathbf{m}')\sg(\tilde{u})-f'(\mathbf{m}')\sg(\tilde{v})\dif x \\
& = \int_{\ball(z,R)}f(\sg(u))- f(\sg(v))\dif x \\
& - \int_{\ball(z,R)} f'(\mathbf{m}')\sg(\tilde{u})- f'(\mathbf{m}')\sg(\tilde{v})\dif x. 
\end{align*}
Now notice that $\sg(\tilde{u})-\sg(\tilde{v})=\sg(u)-\sg(v)$ and so, by the usual Gauss--Green Theorem if $1<p<\infty$ or \eqref{eq:trace} if $p=1$,
\begin{align*}
\int_{\ball(z,R)} f'(\mathbf{m}')(\sg(\tilde{u})-\sg(\tilde{v}))\dif x & = \int_{\partial\ball(z,R)}\langle f'(\mathbf{m}'),\trace(u-v)\odot\nu)\dif\mathscr{H}^{n-1} \\
& - \int_{\ball(z,R)}\di((f'(\mathbf{m}')))(u-v)\dif x = 0
\end{align*}
because $\di((f'(\mathbf{m}')))=0$ and, by our assumptions on $u$ and $v$, $\trace(u)=\trace(v)$ $\mathcal{H}^{n-1}$--a.e. on $\partial\ball(z,R)$. The claim follows. 
\end{proof}
\begin{lemma}\label{lem:convex}
Let $u\in X(\Omega)$ and let $\varepsilon,\delta,R>0$ and $z\in\Omega$ be such that $\ball(z,R)\Subset\Omega$. Then the following holds: 
\begin{itemize}
\item[\emph{(i)}] If $0<t_{1}<t_{2}<R-\varepsilon-\delta$, then there exists $t\in (t_{1},t_{2})$ such that 
\begin{align}\label{eq:convexo}
\int_{\ball(z,t)}f(\sg(u_{\varepsilon,\delta})-f(\sg(u)) \lesssim \frac{\varepsilon+\delta}{t_{2}-t_{1}}\int_{\ball(z,R)}f(\sg(u)). 
\end{align}
\item[\emph{(ii)}] If $0<t_{1}<t_{2}<R-\varepsilon-\delta$, $0<r<R/4$ and $R/2 \leq t_{1} <R-\varepsilon-\delta$, then there exist $r'\in (r,2r)$ and $t'\in (t_{1},t_{2})$ with 
\begin{align}
\int_{\ball(z,t')\setminus\ball(z,r')}f(\sg(u_{\varepsilon,\delta}))-f(\sg(u))\lesssim \left(\frac{\varepsilon+\delta}{t_{2}-t_{1}}+\frac{\varepsilon+\delta}{r}\right)\int_{\ball(z,R)}f(\sg(u)). 
\end{align}
\end{itemize}
\end{lemma}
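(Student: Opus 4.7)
The key observation driving both parts is a Jensen--type inequality for the mollification. Writing $\mu:=\sg(u)$ (a measure when $p=1$, a function when $p>1$) and $\rho_{\varepsilon,\delta}:=\eta_{\varepsilon}*\eta_{\delta}$, which is a standard mollifier supported in $\overline{\ball(0,\varepsilon+\delta)}$, I would first establish that
\begin{align*}
f(\sg(u_{\varepsilon,\delta}))(x) = f\bigl((\mu*\rho_{\varepsilon,\delta})(x)\bigr) \leq \int\rho_{\varepsilon,\delta}(x-y)\dif[f(\mu)](y) \qquad\text{for a.e. }x.
\end{align*}
For $p>1$ this is Jensen's inequality in its usual form; for $p=1$ it is Jensen's inequality for convex functions of measures, and the clean way to justify it is to approximate $u$ area--strictly by smooth functions $u^{(k)}\in(\hold^{\infty}\cap\ld)(\Omega)$ (Lemma after Theorem \ref{thm:resh}), apply the pointwise Jensen inequality for the smooth mollifications $u^{(k)}_{\varepsilon,\delta}$, and pass to the limit using Reshetnyak continuity of the measure $f(\mu)$.

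Integrating the displayed inequality over $\ball(z,t)$ and applying Fubini yields
\begin{align*}
\int_{\ball(z,t)} [f(\sg(u_{\varepsilon,\delta}))-f(\sg(u))]\,\dif x \leq \int\rho_{\varepsilon,\delta}(y)\Bigl[f(\mu)(\ball(z+y,t))-f(\mu)(\ball(z,t))\Bigr]\dif y.
\end{align*}
The bracketed difference is bounded in absolute value by $f(\mu)\bigl(\ball(z+y,t)\triangle\ball(z,t)\bigr)$, and for $|y|\leq\varepsilon+\delta$ the symmetric difference is contained in the shell $S(t):=\{x: t-(\varepsilon+\delta)<|x-z|<t+(\varepsilon+\delta)\}$. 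Since $\rho_{\varepsilon,\delta}$ integrates to $1$, the previous display therefore reduces to
\begin{align*}
\int_{\ball(z,t)} [f(\sg(u_{\varepsilon,\delta}))-f(\sg(u))]\,\dif x \leq f(\mu)(S(t)).
\end{align*}

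To prove (i), I would select $t$ by an averaging argument. Integrating $f(\mu)(S(t))$ over $t\in(t_{1},t_{2})$ and using Fubini, the set $\{t\in(t_{1},t_{2}): |t-|x-z||<\varepsilon+\delta\}$ has Lebesgue measure at most $2(\varepsilon+\delta)$ for every fixed $x$, so
\begin{align*}
\int_{t_{1}}^{t_{2}} f(\mu)(S(t))\,\dif t \leq 2(\varepsilon+\delta)\,f(\mu)(\ball(z,R)),
\end{align*}
using $t_{2}+\varepsilon+\delta<R$. The mean value theorem then furnishes a $t\in(t_{1},t_{2})$ with $f(\mu)(S(t))\leq \frac{2(\varepsilon+\delta)}{t_{2}-t_{1}}f(\mu)(\ball(z,R))$, which is exactly \eqref{eq:convexo}.

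For (ii) the annular domain produces two boundary shells, one near $|x-z|=r'$ and one near $|x-z|=t'$. Repeating the symmetric-difference estimate with $\ball(z,t')\setminus\ball(z,r')$ in place of $\ball(z,t)$ gives the bound by $f(\mu)(S(t')\cup\widetilde S(r'))$, where $\widetilde S(r')$ is the analogous shell around $\partial\ball(z,r')$. Running the averaging argument independently over $t'\in(t_{1},t_{2})$ (yielding the factor $(\varepsilon+\delta)/(t_{2}-t_{1})$) and over $r'\in(r,2r)$ (yielding the factor $(\varepsilon+\delta)/r$), a pigeonhole argument produces a single pair $(r',t')$ at which both shell bounds hold, which completes (ii). The technical obstacle is the Jensen step in the $\bd$ setting; once that is in place, the remainder is a clean Fubini-plus-mean-value argument.
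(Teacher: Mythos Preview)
Your argument is correct and shares the paper's core idea (Jensen for the mollification followed by an averaging-in-radius step), but the bookkeeping is organised differently and is worth comparing. The paper introduces a tent function $g$ (piecewise linear, $1$-Lipschitz, supported on $\ball(z,t_{2})$) and uses the layer--cake identity
\[
\int_{t_{1}}^{t_{2}}\int_{\ball(z,t)}\psi\,\dif x\,\dif t=\int_{\ball(z,R)}\psi\,g\,\dif x
\]
together with the self-adjointness of convolution, $\int(\psi-\psi_{\delta})g=\int(g-g_{\delta})\psi$, and the Lipschitz bound $|g-g_{\delta}|\leq\delta$; it also treats the $\varepsilon$-- and $\delta$--mollifications in two separate Jensen steps and concludes via a ``Fail set'' argument. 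Your route applies Jensen once for the combined kernel, reduces to the shell estimate $f(\mu)(S(t))$ via the symmetric difference of translated balls, and then extracts a good radius by the mean-value theorem. Your version is arguably more geometric and slightly shorter; the paper's duality trick is a bit slicker analytically and makes the appearance of $\varepsilon+\delta$ completely transparent through the Lipschitz constant of $g$. For part~(ii) both arguments decouple the inner and outer radii in the same way; your observation that the two shell bounds depend on $r'$ and $t'$ separately, so that the good pair can be chosen independently, is exactly right and mirrors the paper's use of the second tent function $h$.

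One small point to tighten: in the $\bd$ case the pointwise Jensen inequality $f(\mu\ast\rho_{\varepsilon,\delta})\leq f(\mu)\ast\rho_{\varepsilon,\delta}$ is used by the paper as well (in the form $f(\sg(u_{\delta,\varepsilon}))\leq (f(\sg(u_{\varepsilon})))_{\delta}$), and your plan to justify it via area--strict approximation and Reshetnyak continuity is the appropriate way to make this rigorous.
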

\begin{proof}
Following \cite{AG}, the proof of (i) or (ii), respectively, is accomplished by considering the auxiliary functions $g,h\colon \R^{n}\to\R$ given by 
\begin{align*}
&g(x):=(t_{2}-t_{1})\mathbbm{1}_{\{|x|<t_{1}\}}(x)+(t_{2}-|x|)\mathbbm{1}_{\{t_{1}\leq |x|\leq t_{2}\}}(x),\;x\in\R^{n} 
\end{align*}
for the first part of the lemma and 
\begin{align*}
h(x)&:=\frac{(t_{2}-t_{1})(|x|-r)}{r}\mathbbm{1}_{\{r\leq|x|<2r\}}(x)+(t_{2}-t_{1})\mathbbm{1}_{\{2r\leq |x|<t_{1}\}}(x)\\ &+(t_{2}-|x|)\mathbbm{1}_{\{t_{1}\leq |x|<t_{2}\}}(x),\;x\in\R^{n}
\end{align*}
for the second. In the following we will concentrate on (i) and consequently the use of $g$; (ii) follows in the same way. The strategy of the proof is to first define
\begin{align*}
\mathtt{Fail}:=\{t\in (t_{1},t_{2})\colon\;\text{inequality}\;\eqref{eq:convexo}\;\text{is not satisfied by}\;t\}
\end{align*}
and then to show that $\mathscr{L}^{1}(\mathtt{Fail})\leq \tfrac{1}{2}(t_{2}-t_{1})$. From this it follows that the one--dimensional Lebesgue measure of the set where \eqref{eq:convexo} holds is at least $\tfrac{1}{2}(t_{2}-t_{1})$ and so the set must be non--empty; this will imply (i). To prove the estimate for $\mathtt{Fail}$, we first observe by the definition of $g$ that if $t_{1}\leq t \leq t_{2}$, then $\ball(z,t)=\{x\in\ball(z,R)\colon t<g(x)\}$. So we have for any $\psi\in L^{1}(\ball(z,R);\R)$ that 
\begin{align*}
\int_{t_{1}}^{t_{2}}\int_{\ball(z,t)}\psi(y)\dif y \dif t & = \int_{t_{1}}^{t_{2}}\int_{\{x\in\ball(z,R)\colon t<g(x)\}}\psi(y)\dif y \dif t \\
& = \int_{0}^{\infty}\int_{\{x\in\ball(z,R)\colon t<g(x)\}}\psi(y)\dif y \dif t = \int_{\ball(z,R)}\psi(y)g(y)\dif y. 
\end{align*}
Now recall the notation $\psi_{\delta},\psi_{\varepsilon}$ and $\psi_{\delta,\varepsilon}$ from section \ref{sec:molli}. Choosing $\delta$ and $\varepsilon$ due to the assumptions of the present lemma, we obtain 
\begin{align*}
\int_{t_{1}}^{t_{2}}\int_{\ball(z,t)}(\psi-\psi_{\delta})\dif x\dif t & = \int_{\ball(z,R)}(\psi-\psi_{\delta})g\dif x  = \int_{\ball(z,R)}(g-g_{\delta})\psi\dif x \\
& \leq \sup_{\ball(z,R)}|g_{\delta}-g|\int_{\ball(z,t_{2}+\delta)}\psi\,\dif x \leq \delta \int_{\ball(z,t_{2}+\delta)}\psi\,\dif x
\end{align*}
and by the same estimation we also get 
\begin{align*}
\int_{t_{1}}^{t_{2}}\int_{\ball(z,t)}(\psi-\psi_{\varepsilon})\dif x\dif t \leq \varepsilon \int_{\ball(z,t_{2}+\varepsilon)}\psi\,\dif x. 
\end{align*}
Combined with $f(\sg(u_{\delta,\varepsilon}))\leq(f(\sg(u_{\varepsilon}))_{\delta}$ which holds due to Jensen's inequality by convexity of $f$, these estimates imply by $t_{2}+\delta+\varepsilon<R$ that 
\begin{align*}
\int_{t_{1}}^{t_{2}}\int_{\ball(z,t)}f(\sg(u_{\delta,\varepsilon}))-f(\sg(u))\dif x \dif t & \leq \int_{t_{1}}^{t_{2}}\int_{\ball(z,t)}(f(\sg(u_{\varepsilon})))_{\delta}-f(\sg(u_{\varepsilon}))\dif x \dif t \\
& + \int_{t_{1}}^{t_{2}}\int_{\ball(z,t)}f(\sg(u_{\varepsilon}))-f(\sg(u))\dif x \dif t \\
& \leq (\delta+\varepsilon)\int_{\ball(z,R)}f(\sg(u))\dif x. 
\end{align*}
Now, if $t'\in\mathtt{Fail}$, then 
\begin{align*}
\int_{t_{1}}^{t_{2}}\int_{\ball(z,t)}f(\sg(u_{\delta,\varepsilon}))-f(\sg(u))\dif x \dif t \leq \frac{t_{2}-t_{1}}{2}\int_{\ball(z,t')}f(\sg(u_{\delta,\varepsilon}))-f(\sg(u))\dif x, 
\end{align*}
so integrating over all $t'\in\mathtt{Fail}$ an regrouping terms yields 
\begin{align*}
\mathscr{L}^{1}(\mathtt{Fail})\leq \frac{t_{2}-t_{1}}{2}. 
\end{align*}
As we explained, this implies (i). Assertion (ii) is proved in the same way and so the claim follows. 
\end{proof}
The main difference to the case $p=1$ is given in the following lemma, where Korn's Inequality plays an essential role. In the following, given $z\in\R^{n}$ and $0<t<s$, we shall use the shorthand
\begin{align*}
A(z;t,s):=\{x\in\R^{n}\colon\;t<|x-z|<s\}
\end{align*}
for the annulus centered at $z$ with inner radius $t$ and outer radius $s$. 
\begin{lemma}\label{lem:molest}
Let $1\leq p<\infty$ and let $u\in X(\Omega)$. Let $x_{0}\in\Omega$ and choose $r,\varepsilon,\delta,s,t,L>0$ such that $r/2 < t < s < r$, $s+\delta+\varepsilon<r$ and $\ball(x_{0},r)\subset\Omega$. Then there exists a constant $c>0$ such that 
\begin{align}\label{eq:critical}
\int_{A(x_{0};t,s)}\mathbf{e}(L\;(u-u_{\delta,\varepsilon}))\dif x \leq c\,\max\{(L\varepsilon)^{p},(L\varepsilon)^{2}\}\int_{\overline{A(x_{0};t-\varepsilon,s+\varepsilon)}}\mathbf{e}(\sg(u))\dif x. 
\end{align}
\end{lemma}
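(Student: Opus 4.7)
The plan is to combine a local cube covering with a nonlinear Poincar\'{e}-type inequality for the function $\mathbf{e}$, exploiting the affine invariance of the mollifiers and the scaling inequality of Lemma~\ref{lem:eest}(i).

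First I would cover the annulus $A(x_{0};t,s)$ by a finite family $\{Q_{k}\}$ of cubes of side $\ell\sim\varepsilon$, arranged so that the enlargements $Q_{k}^{\ast}:=Q_{k}+\overline{\ball(0,\delta+\varepsilon)}$ lie inside $\overline{A(x_{0};t-\varepsilon,s+\varepsilon)}$ and so that $\{Q_{k}^{\ast}\}$ has bounded overlap depending only on $n$. On each $Q_{k}^{\ast}$ I pick a rigid deformation $R_{k}\in\mathcal{R}(Q_{k}^{\ast})$ minimising $\|u-R_{k}\|_{L^{1}(Q_{k}^{\ast})}$ (respectively $\|u-R_{k}\|_{L^{p}(Q_{k}^{\ast})}$, when $1<p<\infty$), as provided by Lemma~\ref{lem:poincare} together with Korn's inequality. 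Since every $R_{k}$ is affine and both mollifiers $u\mapsto u_{\varepsilon}$ and $v\mapsto\eta_{\delta}\ast v$ are radially symmetric, each $R_{k}$ is left invariant by the double mollification: $(R_{k})_{\delta,\varepsilon}=R_{k}$. Hence
\begin{align*}
u-u_{\delta,\varepsilon}=(u-R_{k})-(u-R_{k})_{\delta,\varepsilon}\qquad\text{on }Q_{k}.
\end{align*}

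Combining the subadditivity of $\mathbf{e}$ from Lemma~\ref{lem:eest}(i) with Jensen's inequality applied to the convex function $\mathbf{e}(L\,\cdot)$ gives the pointwise bound
\begin{align*}
\mathbf{e}(L(u-u_{\delta,\varepsilon}))\leq c\bigl[\mathbf{e}(L(u-R_{k}))+(\mathbf{e}(L(u-R_{k})))_{\delta,\varepsilon}\bigr]
\end{align*}
on $Q_{k}$. Integrating and using that $(\cdot)_{\delta,\varepsilon}$ averages over $\ball(0,\delta+\varepsilon)$ reduces the problem to bounding $\int_{Q_{k}^{\ast}}\mathbf{e}(L(u-R_{k}))\,\dif x$. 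Writing $L(u-R_{k})=(L\varepsilon)\,(u-R_{k})/\varepsilon$ and applying Lemma~\ref{lem:eest}(i) with $t=L\varepsilon$ extracts precisely the factor $\max\{(L\varepsilon)^{p},(L\varepsilon)^{2}\}$ from the right of \eqref{eq:critical}, so that one is reduced to the nonlinear Poincar\'{e}-type estimate
\begin{align*}
\int_{Q_{k}^{\ast}}\mathbf{e}\bigl((u-R_{k})/\varepsilon\bigr)\,\dif x\;\lesssim\;\int_{Q_{k}^{\ast\ast}}\mathbf{e}(\sg(u))\,\dif x
\end{align*}
on a slightly larger cube $Q_{k}^{\ast\ast}\subset\overline{A(x_{0};t-\varepsilon,s+\varepsilon)}$, followed by a summation over $k$ using the bounded overlap of $\{Q_{k}^{\ast\ast}\}$.

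The displayed estimate is the heart of the argument and the place where the two regimes $p=1$ and $p>1$ diverge. For $1<p<\infty$, Korn's inequality in $W^{1,p}$ yields $\|D(u-R_{k})\|_{L^{p}(Q_{k}^{\ast})}\lesssim\|\sg(u)\|_{L^{p}(Q_{k}^{\ast})}$, which combined with the standard $L^{p}$-Poincar\'{e} furnishes $\|(u-R_{k})/\varepsilon\|_{L^{p}(Q_{k}^{\ast})}\lesssim\|\sg(u)\|_{L^{p}(Q_{k}^{\ast})}$; the mixed quadratic/$p$-growth behaviour of $\mathbf{e}$ (Lemma~\ref{lem:eest}) then allows one to split the integrand on the regimes $\{|\cdot|\leq 1\}$ and $\{|\cdot|>1\}$ and to transfer the $L^{p}$-estimate into an $\mathbf{e}$-estimate. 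For $p=1$, Korn's inequality is unavailable and must be replaced by the BD Poincar\'{e}--Korn estimate of Lemma~\ref{lem:poincare}, which covers the linear-growth part of $\mathbf{e}$ directly; the quadratic regime near $0$ is then handled by a separate pointwise argument. The main obstacle is precisely this last step: the mixed growth of $\mathbf{e}$ prevents a reduction to a single $L^{q}$-norm, and the $p=1$ case moreover forces one to work in $\bd$ rather than $W^{1,1}$, compensating for the failure of Korn's inequality by the BD version of Lemma~\ref{lem:poincare}.
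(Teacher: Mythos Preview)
Your approach is essentially the same as the paper's, though you are re-deriving machinery that the paper has already packaged. The paper's proof is a single line: it invokes Corollary~\ref{cor:main} together with Lemma~\ref{lem:eest}(i). But Corollary~\ref{cor:main} is itself proved by exactly the cube covering with rigid deformations $R_{Q}$ that you outline (this is the content of Proposition~\ref{prop:convest}), so the underlying mechanism is identical.

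Where you diverge from the paper is in the final step, and here you are making life harder than necessary. You flag the ``mixed growth of $\mathbf{e}$'' as the main obstacle and propose to split into the regimes $\{|\cdot|\leq 1\}$ and $\{|\cdot|>1\}$, handling the quadratic part near zero by a ``separate pointwise argument'' that you do not specify. This splitting is not needed. For $p=1$ the function $t\mapsto e(t)=\sqrt{1+t^{2}}-1$ is convex, vanishes at the origin, is of linear growth, and satisfies the quasi-subadditivity of Lemma~\ref{lem:eest}(i); hence Corollary~\ref{cor:main} applies to it \emph{as is}, and the Jensen step inside that corollary absorbs both growth regimes at once. For $1<p<\infty$ one replaces the $\bd$-Poincar\'{e} of Lemma~\ref{lem:poincare} by the $\sobo^{1,p}$-Poincar\'{e} combined with Korn, and the same Jensen argument goes through because $\mathbf{e}$ is still convex. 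In either case, once the cube-level inequality
\[
\int_{Q}\mathbf{e}\big((u-R_{Q})/\varepsilon\big)\,\dif x\;\lesssim\;\int_{Q^{\ast}}\mathbf{e}(\sg(u))\,\dif x
\]
is available (and it is, by the Jensen-on-cubes argument of Proposition~\ref{prop:convest}), the factor $\max\{(L\varepsilon)^{p},(L\varepsilon)^{2}\}$ drops out via Lemma~\ref{lem:eest}(i) exactly as you indicate. So your plan is correct; the only gap is that you stop short at the point where you should either invoke Corollary~\ref{cor:main} directly or recognise that the convexity of $\mathbf{e}$ makes the regime-splitting superfluous.
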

\begin{proof}
The estimate is an immediate consequence of Corollary~\ref{cor:main} in conjunction with Lemma \ref{lem:eest}~(i).
\end{proof}
The next two propositions form the main part of the partial regularity proof. Thus we stress that from now on we are working with the fact that $u\in X(\Omega)$ is a local ($\bd$--)minimiser of the functional \eqref{eq:varprin}. 
\begin{proposition}\label{prop:main1}
Let $f\colon \R_{\sym}^{n\times n}\to\R$ be a convex $\hold^{2}$--function of $p$--growth, $1\leq p<\infty$. Then there exists $a>0$ such that for any minimiser $u\in X(\Omega)$ the following holds: If we have 
\begin{align*}
(\sg(u))_{z,R}=\mathbf{m}\;\;\;\text{and}\;\;\;\excesso(u;z,R)<a,
\end{align*}
and there exists $\sigma>0$ such that $f$ is of class $\hold^{2}$ in $\mathbb{B}(\mathbf{m},\sigma)$ verifying
\begin{align*}
|f''(\mathbf{m}')-f''(\mathbf{m})|\leq\omega(|\mathbf{m}-\mathbf{m}'|)\;\;\;\text{for all}\;\mathbf{m},\mathbf{m}'\in\mathbb{B}(\mathbf{m},\sigma)
\end{align*}
with a bounded, non--decreasing function $\omega\colon\R_{0}^{+}\to\R_{0}^{+}$ such that we have 
\begin{align*}
\lambda |\mathbf{Z}|^{2}\leq \langle f''(\mathbf{m})\mathbf{Z},\mathbf{Z}\rangle\leq \Lambda|\mathbf{Z}|^{2}
\end{align*}
for two constants $0<\lambda\leq\Lambda<\infty$ and all $\mathbf{Z}\in \R_{\sym}^{n\times n}$, then there exists a constant $c>0$ and a bounded and non--decreasing function $h\colon\R_{0}^{+}\to\R_{0}^{+}$ with $\lim_{t\searrow 0}h(t)=0$ such that
\begin{align}\label{eq:partial}
\excenew(u;z,r)\leq c\excenew(v;z,2r)+ch(\excesso(u;z,R))(1+(R/r)^{n+1})\excenew(u;z,R)
\end{align}
holds for all $0<r<R/4$. Here we have set $v:=u_{\delta,\varepsilon}$ with $\delta,\varepsilon$ adjusted according to Lemma \ref{lem:adjust}. 
\end{proposition}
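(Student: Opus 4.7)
The plan is to compare $u$ with its smooth mollification $v = u_{\delta,\varepsilon}$ (with $\delta = \varepsilon = R\excesso(u;z,R)^\gamma/16$ from Lemma~\ref{lem:adjust} applied with $R$ in place of $r$) via a standard cutoff competitor supported on an annulus of width comparable to $r$. Put $\mathbf m := (\sg(u))_{z,R}$ and pass to the shifted integrand $\tilde f$ and shifted maps $\tilde u,\tilde v$ of \eqref{eq:tildef}--\eqref{eq:tildew}; the algebraic identity of the first lemma in Section~\ref{sec:decay} transfers minimality of $u$ against $f$--competitors to the analogous property for $\tilde u$ and $\tilde f$, while positive--definiteness of $f''(\mathbf m)$ combined with $\sup_{\ball(z,R/2)}|\sg(v)-\mathbf m|\lesssim\excesso^\beta$ from Lemma~\ref{lem:adjust} gives $\tilde f\approx\mathbf e$ uniformly on the arguments that occur. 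It therefore suffices to bound $\int_{\ball(z,r)}\tilde f(\sg(\tilde u))\dif x$ in the claimed form.

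Fix any $\rho\in(2r,3r]$ (so $\rho<R$ since $r<R/4$) and take a cutoff $\eta\in\hold_c^\infty(\ball(z,\rho);[0,1])$ with $\eta\equiv 1$ on $\ball(z,2r)$ and $|\nabla\eta|\lesssim 1/r$; set $w := \eta v + (1-\eta)u$. Then $w=u$ on $\partial\ball(z,\rho)$, and since $v$ is smooth we have $E^s w=(1-\eta)E^s u$. Invoking local $\bd$--minimality on $\omega := \ball(z,\rho)$ (with vanishing boundary penalty) and translating to $\tilde f$ yields
\[
\int_\omega \tilde f(\mathscr E\tilde u)\dif x + \int_\omega \eta\,\tilde f^\infty\!\left(\tfrac{\dif E^s u}{\dif|E^s u|}\right)\dif|E^s u|\leq\int_\omega \tilde f(\sg(\tilde w))\dif x.
\]
Expanding $\sg(\tilde w)=\eta\sg(\tilde v)+(1-\eta)\sg(\tilde u)+\nabla\eta\odot(v-u)$ and combining convexity of $\tilde f$ with the refined $\Delta_2$--Young inequality $\mathbf e(A+B)\leq(1+\varepsilon')\mathbf e(A)+C(\varepsilon')\mathbf e(B)$ (valid for the $\Delta_2$--function $\mathbf e$) gives
\[
\tilde f(\sg(\tilde w))\leq(1+\varepsilon')\bigl[\eta\tilde f(\sg(\tilde v))+(1-\eta)\tilde f(\sg(\tilde u))\bigr]+C(\varepsilon')\tilde f(\nabla\eta\odot(v-u)).
\]

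Splitting $\int_\omega=\int_{\ball(z,2r)}+\int_A$ with $A := A(z;2r,\rho)$, dropping the nonnegative singular--part term on the left, and absorbing the $(1+\varepsilon')(1-\eta)\tilde f(\sg(\tilde u))$--piece from right to left (using $1-(1+\varepsilon')(1-\eta)\geq -\varepsilon'$) gives
\[
\int_{\ball(z,2r)}\!\!\tilde f(\sg(\tilde u))\dif x \leq \int_{\ball(z,2r)}\!\!\tilde f(\sg(\tilde v))\dif x + (1{+}\varepsilon')\!\int_A\!\eta\tilde f(\sg(\tilde v))\dif x + C(\varepsilon')\!\int_A\!\tilde f(\nabla\eta\odot(v{-}u))\dif x + \varepsilon'\excenew(u;z,R).
\]
The annular $\tilde f(\sg(\tilde v))$--term is dominated by the $L^\infty$--bound $|\sg(\tilde v)|\lesssim\excesso^\beta$ on $A\subset\ball(z,R/2)$: since $\tilde f\approx|\cdot|^2$ at $0$, we get $\int_A\eta\tilde f(\sg(\tilde v))\lesssim r^n\excesso^{2\beta}\lesssim\excesso^{2\beta-1}\excenew(u;z,R)$. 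The cutoff--gradient term is controlled by Corollary~\ref{cor:main} and Lemma~\ref{lem:molest} with $L\lesssim 1/r$ and $\varepsilon\sim R\excesso^\gamma$, producing an error dominated by $C(\varepsilon')(R/r)^{n+1}\excesso^{\min(p,2)\gamma}\excenew(u;z,R)$. Choosing $\varepsilon'=\excesso^{\gamma'}$ for a suitable small $\gamma'>0$ (balanced against $\gamma$) collapses all three remainders into $c\,h(\excesso)(1+(R/r)^{n+1})\excenew(u;z,R)$ with $h(t)\to 0$. A final passage from $\int_{\ball(z,2r)}\tilde f(\sg(\tilde v))$ to $\excenew(v;z,2r)$ via $\tilde f\approx\mathbf e$, Jensen's inequality, and the $L^\infty$--smallness of $(\sg(v))_{z,2r}-\mathbf m$ (absorbable into $h$), and the reverse passage $\excenew(u;z,r)\lesssim\int_{\ball(z,r)}\tilde f(\sg(\tilde u))$ via the triangle trick, conclude the proof of \eqref{eq:partial}.

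The main obstacle is the absorption step: a direct application of Lemma~\ref{lem:eest}(i) with its constant $c>1$ would leave a $(c-1)\int_A\tilde f(\sg(\tilde u))$ remainder lacking any smallness, spoiling the estimate. The refined $\Delta_2$--splitting with tunable $\varepsilon'\to 0$ is essential to close the absorption; its interplay with the blowing--up $C(\varepsilon')$ in the cutoff term forces a careful coupling between $\varepsilon'$ and the mollification exponent $\gamma$ of Lemma~\ref{lem:adjust}, and dictates the precise form of $h$. A secondary subtlety is the correct $\bd$--accounting: verifying $E^s w=(1-\eta)E^s u$ (so that the "$\eta f^\infty$" term appears on the left after translating to $\tilde f^\infty$) and using its nonnegativity to drop it in favour of the absolutely--continuous inequality, all while keeping track of the affine corrections produced by passing between $f^\infty$ and $\tilde f^\infty$.
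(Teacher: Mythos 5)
Your proposal has a genuine gap, centred on the annular error term, and it is precisely the obstacle that the paper's more elaborate construction is designed to circumvent. After your absorption step, you are left with the annular contribution $\int_A\eta\,\tilde f(\sg(\tilde v))\,\dif x$ with $A=A(z;2r,\rho)$ and $\rho\leq 3r$. You bound this by $r^n\excesso^{2\beta}\lesssim\excesso^{2\beta-1}\excenew(u;z,R)$ using the pointwise smallness $|\sg(\tilde v)|\lesssim\excesso^\beta$ from Lemma~\ref{lem:adjust}. The arithmetic is right, but the exponent is not: Lemma~\ref{lem:adjust} yields $\beta=\tfrac12(1-n\gamma)-\gamma\alpha<\tfrac12$, hence $2\beta-1<0$ and $\excesso^{2\beta-1}\not\to 0$ as $\excesso\searrow 0$. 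In fact, since $\excesso<1$ this factor is $\geq 1$, so this term is not absorbable into $h(\excesso)(1+(R/r)^{n+1})\excenew$. No sharper pointwise bound helps either: even if you bound the annular term by Jensen through $\int_{A+\ball(0,2\varepsilon)}\mathbf e(\sg(u)-\mathbf m)\,\dif x\leq\excenew(u;z,R)$, you only get the full excess with a nondecaying constant. The obstruction is conceptual: a \emph{fixed} annulus located at radius $\sim r$ has no reason to carry only a small fraction of the total $\mathbf e$--energy on $\ball(z,R)$.

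The paper avoids this by placing the cutoff in a \emph{far} annulus, with inner and outer radii in $[\tfrac58 R,\tfrac78 R]$ and width $\sim R\,\excesso^{\gamma/2}$, and by averaging over $K\sim\excesso^{-\gamma/2}$ \emph{disjoint} such annuli $\mathcal A_1,\dots,\mathcal A_K$ inside $\ball(z,R)$: pigeonholing gives a single annulus $\mathcal A_{k'}$ with $\int_{\mathcal A_{k'}}\tilde f(\sg(\tilde u))\lesssim\excesso^{\gamma/2}\excenew(u;z,R)$, and Lemma~\ref{lem:convex}(ii) similarly lets one choose the inner radius $r_{k'}\in(r,2r)$ and the outer radii so that the smooth--comparison term $\mathbf{II}$ is controlled. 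This moving--annulus / median selection is what produces the genuine smallness $h(\excesso)\to 0$; your construction, which freezes both the width and the location of the annulus at scale $r$, cannot reproduce it. A change of approach (re--introducing the pigeonholing, or at least letting the annulus location be excess--dependent) rather than a refinement of constants is needed.

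A secondary point worth flagging: the refined $\Delta_2$--splitting $\tilde f(A+B)\leq(1+\varepsilon')\tilde f(A)+C(\varepsilon')\tilde f(B)$ on which your absorption relies needs justification for $\tilde f$ itself, not just for $\mathbf e$. Passing through $\tilde f\approx\mathbf e$ costs a multiplicative constant $c_2/c_1\geq 1$ that would destroy the near--$1$ coefficient on the $\tilde f(A)$ term. To get it directly for $\tilde f$ one would need a $\Delta_2'$--type control $\langle\tilde f'(\xi),\xi\rangle\leq C\tilde f(\xi)$ to infer $\tilde f((1+\varepsilon')\xi)\leq(1+C\varepsilon')\tilde f(\xi)$; this is plausible under the $p$--growth and the uniform positive definiteness near $\mathbf m$, but it is not an off--the--shelf fact and should be proved. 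This is less serious than the annulus issue, but as written the absorption is not fully justified.
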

\begin{proof}
Let $0<r<R/4$ and put $\mathbf{m}':=(\sg(v))_{\ball(z,r)}$. Due to our choice of $\delta$ and $\varepsilon$, Lemma \ref{lem:adjust} gives us $|\mathbf{m}-\mathbf{m}'|<c_{1}\exce^{\beta}(u;z,R)$ and thus we may choose $0<a<\frac{1}{2}$ sufficiently small such that $\exce(u;z,R)<a$ yields $|\mathbf{m}-\mathbf{m}'|<\sigma/2$. We observe that 
\begin{align}\label{eq:usmall}
\begin{split}
\excenew(u;z,r) & = \int_{\ball(z,r)}\mathbf{e}(\sg(u)-(\sg(u))_{\ball(z,r)}) \\
& \lesssim  \int_{\ball(z,r)}\mathbf{e}(\sg(u)-\mathbf{m}')+ \int_{\ball(z,r)}\mathbf{e}((\sg(u))_{\ball(z,r)}-\mathbf{m}')\\
& \lesssim \int_{\ball(z,r)}\mathbf{e}(\sg(u)-\mathbf{m}') \lesssim \int_{\ball(z,r)}\tilde{f}(\sg(\tilde{u})). 
\end{split}
\end{align}
Now fix $R/2<t<s<\tfrac{7}{8}R$ and define $\rho\in \hold_{c}(\ball(z,R);[0,1])$ by 
\begin{align}\label{eq:cutoff}
\rho(x):=\frac{2}{s-t}(|x|-t)\mathbbm{1}_{\{t\leq |x|\leq (s+t)/2\}}(x)+\mathbbm{1}_{\{|x|>(s+t)/2\}}(x),\;\;\;x\in\ball(z,R). 
\end{align}
Then we have $\tilde{v}+\rho(\tilde{u}-\tilde{v})\in X(\Omega)$. Using that $\tilde{u}$ is a local minimiser of the modified functional $\tilde{\mathscr{F}}$ in the second step, we deduce that 
\begin{align*}
\int_{\ball(z,r)}\tilde{f}(\sg(\tilde{u}))\dif x & = \left(\int_{\ball(z,r)} +\int_{\ball(z,t)\setminus\ball(z,r)}+\int_{\ball(z,s)\setminus\ball(z,t)}\right)\tilde{f}(\sg(\tilde{u}))\dif x \\ 
& \leq \int_{\ball(z,r)}\tilde{f}(\sg(\tilde{v}))\dif x + \int_{\ball(z,t)\setminus\ball(z,r)}\tilde{f}(\sg(\tilde{v}))\dif x \\ & + \int_{\ball(z,s)\setminus\ball(z,t)}\tilde{f}(\sg(\tilde{v}+\rho(\tilde{u}-\tilde{v})))\dif x.
\end{align*}
We recall estimate \eqref{eq:usmall} and regroup terms to obtain 
\begin{align*}
\excenew(u;z,r) & \lesssim \int_{\ball(z,r)}\tilde{f}(\sg(\tilde{v}))\dif x +\int_{\ball(z,t)\setminus\ball(z,r)}\tilde{f}(\sg(\tilde{v}))-\tilde{f}(\sg(\tilde{u}))\dif x \\ & +\int_{\ball(z,s)\setminus\ball(z,t)}\tilde{f}(\sg(\tilde{v}+\rho(\tilde{u}-\tilde{v})))-\tilde{f}(\sg(\tilde{u}))\dif x =:\mathbf{I}+\mathbf{II}+\mathbf{III}.
\end{align*}
We now estimate the single terms. In view of $\mathbf{I}$, we directly work from the definition of $\tilde{f}$ and $\tilde{v}$. Then it follows by $\tilde{f}\approx\mathbf{e}$ and the definition of $\mathbf{m}'$ that  
\begin{align}
\mathbf{I} = \int_{\ball(z,r)}\tilde{f}(\sg(v)-\mathbf{m}')\dif x \lesssim \excenew(u;z,r). 
\end{align}
The term $\textbf{II}$ is already in a convenient form and shall be dealt with later. We turn to $\mathbf{III}$; recalling the product rule (i.e., $\sg(\varphi u)=\varphi\sg(u)+\nabla\varphi\odot u$ for $\varphi\colon\R^{n}\to\R$ and $u\colon\R^{n}\to\R^{n}$), monotonicity of $\mathbf{e}$ together with Lemma \ref{lem:eest}(i), we then have 
\begin{align*}
\int_{A(z;t,s)}\tilde{f}(\sg(\tilde{v}+\rho(\tilde{u}-\tilde{v})))\dif x &\lesssim \int_{A(z;t,s)}\mathbf{e}(\sg(\tilde{v}))+\mathbf{e}(\sg(\tilde{u}))+\mathbf{e}\left(\frac{\tilde{u}-\tilde{v}}{s-t}\right)\dif x.
\end{align*}
Keeping in mind $s<\tfrac{7}{8}R$ and therefore $s+\varepsilon+\delta < s+\tfrac{1}{8}R \leq R$, we see that $A(z;t-2\varepsilon,s+2\varepsilon)\subset \Omega$ and so it is admissible to estimate 
\begin{align*}
\int_{A(z;t,s)}\mathbf{e}(\sg(\tilde{v}))\dif x \leq \int_{A(z;t-2\varepsilon,s+2\varepsilon)}\mathbf{e}(\sg(\tilde{u}))\dif x \lesssim \int_{A(z;t-2\varepsilon,s+2\varepsilon)}\tilde{f}(\sg(\tilde{u}))\dif x. 
\end{align*}
Here we have used Jensen's inequality and implicitely the fact that $\delta=\varepsilon$. Combining the last inequality with Lemma \ref{lem:molest}, it follows that 
\begin{align*}
\int_{A(z;t,s)}\tilde{f}(\sg(\tilde{v}+\rho(\tilde{u}-\tilde{v})))\dif x &\lesssim \int_{A(z;t-2\varepsilon,s+2\varepsilon)}\tilde{f}(\sg(\tilde{u}))\dif x \\ & + \max\left\{\frac{\varepsilon^{2}}{(s-t)^{2}},\frac{\varepsilon^{p}}{(s-t)^{p}}\right\}\int_{A(z;t-2\varepsilon,s+2\varepsilon)}\tilde{f}(\sg(\tilde{u}))\dif x.
\end{align*}
Overall, we get the intermediate estimate 
\begin{align*} 
\mathbf{III} & \leq \int_{\ball(z,s)\setminus\ball(z,t)}\tilde{f}(\sg(\tilde{v}+\rho(\tilde{u}-\tilde{v})))+ \tilde{f}(\sg(\tilde{u}))\dif x \\
& \lesssim \int_{A(z;t-2\varepsilon,s+2\varepsilon)}\tilde{f}(\sg(\tilde{u}))\dif x + \max\left\{\frac{\varepsilon^{2}}{(s-t)^{2}},\frac{\varepsilon^{p}}{(s-t)^{p}}\right\}\int_{A(z;t-2\varepsilon,s+2\varepsilon)}\tilde{f}(\sg(\tilde{u}))\dif x
\end{align*}
and hence, putting together our preliminary estimates for $\mathbf{I},\mathbf{II}$ and $\mathbf{III}$, we end up with 
\begin{align}\label{eq:estml}
\begin{split}
\excenew(u;z,r) \lesssim \excenew(v;z,r) & + \int_{\ball(z,t)\setminus\ball(z,r)}\tilde{f}(\sg(\tilde{v}))-\tilde{f}(\sg(\tilde{u}))\dif x \\ & + \int_{A(z;t-2\varepsilon,s+2\varepsilon)}\tilde{f}(\sg(\tilde{u}))\dif x  \\ & + \max\left\{\frac{\varepsilon^{2}}{(s-t)^{2}},\frac{\varepsilon^{p}}{(s-t)^{p}}\right\}\int_{A(z;t-2\varepsilon,s+2\varepsilon)}\tilde{f}(\sg(\tilde{u}))\dif x
\end{split}
\end{align}
being valid for any $0<r<R/2$. The aim of the remaining proof is to utilize this inequality for certain choices of $s,t$. For $l\in\R_{0}^{+}$ denote the integer part of $l$ by $\lfloor l \rfloor$, i.e., 
\begin{align*}
\lfloor l \rfloor :=\max\{l'\in\mathbb{N}_{0}\colon\;l'\leq l\} 
\end{align*}
and define $K:=\lfloor 25(2\excesso^{\gamma/2}(u;z,R))^{-1}\rfloor$ with $0<\gamma<1/(n+2\alpha)$ given by Lemma \ref{lem:adjust}. For $k\in\{1,...,K\}$, put
\begin{align*}
a_{k}:=\frac{5}{8}R+k\frac{R}{400}\exce^{\gamma/2}(u;z,R)
\end{align*}
so that $a_{k}\in [\frac{5}{8}R,\frac{7}{8}R]$. Due to Lemma \ref{lem:convex}(ii), the choice $r<R/4$ implies that for each $k=1,...,K$ there exists $t_{k}\in(a_{8k-1},a_{8k})$ and $r_{k}\in(r,2r)$ with 
\begin{align*}
\int_{A(z;r_{k},t_{k})}\tilde{f}(\sg(\tilde{v}))-\tilde{f}(\sg(\tilde{u}))\dif x & \lesssim \varepsilon \max\left\{(a_{8k}-a_{8k-1})^{-1},r^{-1}\right\}\int_{\ball(z,R)}\tilde{f}(\sg(\tilde{u}))\dif x \\
& \lesssim \max\{\exce^{\gamma/2}(u;z,R),(R/r)\exce^{\gamma}(u;z,R)\}\times \\ 
& \times \int_{\ball(z,R)}\tilde{f}(\sg(\tilde{u}))\dif x. 
\end{align*}
We can choose $L>0$ so small such that for $k=1,...,K$ the annuli 
\begin{align*}
\mathcal{A}_{k}:=\ball(z,s_{k}+2\varepsilon)\setminus\ball(z,t_{k}-2\varepsilon),\;\;\;s_{k}:=t_{k}+LR\exce^{\gamma/2}(u;z,R)
\end{align*}
are pairwise disjoint and subsets of $\ball(z,R)$; for instance, the choice $L=\tfrac{1}{400}$ will do. We can therefore conclude that 
\begin{align*}
\int_{\mathcal{A}_{1}}\tilde{f}(\sg(\tilde{u}))\dif x +...+\int_{\mathcal{A}_{K}}\tilde{f}(\sg(\tilde{u}))\dif x \leq \int_{\ball(z,R)}\tilde{f}(\sg(\tilde{u}))\dif x
\end{align*}
and so we find $k'\in\{1,...,K\}$ such that 
\begin{align*}
\int_{\mathcal{A}_{k'}}\tilde{f}(\sg(\tilde{u}))\dif x \leq K^{-1}\int_{\ball(z,R)}\tilde{f}(\sg(\tilde{u}))\dif x \lesssim \exce^{\gamma/2}(u;z,R)\int_{\ball(z,R)}\tilde{f}(\sg(\tilde{u}))\dif x. 
\end{align*}
Going back to \eqref{eq:estml}, the choice $r=r_{k'}$, $t=t_{k'}$ and $s=s_{k'}$ now immediately yields 
\begin{align}\label{eq:esttwo}
\excenew(u;z,r)\lesssim \excenew(v;z,2r)+(1+R/r)\exce^{\gamma/2}(u;z,R)\int_{\ball(z,R)}\tilde{f}(\sg(\tilde{u}))\dif x 
\end{align}
for all $r<R/2$. This is not yet the decay estimate \eqref{eq:partial}, so we shall provide a suitable estimate on the last factor on the right side of Eq. \eqref{eq:esttwo}. By Lemma \ref{lem:eest}(i) we have that 
\begin{align*}
\int_{\ball(z,R)}\tilde{f}(\sg(\tilde{u})) &\lesssim \int_{\ball(z,R)}\mathbf{e}(\sg(u)-\mathbf{m})+R^{n}\mathbf{e}(\mathbf{m}-\mathbf{m}^{*})\\ 
&\lesssim \excenew(u;z,R) + (R/r)^{n}\int_{\ball(z,r)}\mathbf{e}((\sg(u)-\mathbf{m})_{\varepsilon,\delta})\\
& \lesssim (1+(R/r)^{n})\excenew(u;z,R), 
\end{align*}
the last estimate being valid due to $r+\varepsilon+\delta<R$. Combining this estimate with \eqref{eq:esttwo}, we obtain 
\begin{align}\label{eq:mainfinal}
\excenew(u;z,r)\lesssim \excenew(v;z,2r)+h(\excesso(u;z,R))\left(1+(R/r)^{n+1}\right)\excenew(u;z,R), 
\end{align}
we have set $h(t):=t^{\gamma/2}$. Since $h$ obviously is non--decreasing and moreover satisfies $\lim_{t\searrow 0}h(t)=0$, \eqref{eq:mainfinal} implies the claim. 
\end{proof}

\begin{proposition}\label{prop:propdecay}
In the situation of Proposition \ref{prop:main1} we have 
\begin{align*}
\devi(v;z,R/2)\lesssim h(\excesso(u;z,R))\excenew(u;z,R). 
\end{align*}
\end{proposition}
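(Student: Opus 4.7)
The plan is to build an admissible competitor $w$ for the infimum defining $\devi(v;z,R/2)$ and then run the annulus-plus-pigeonhole scheme used in the proof of Proposition~\ref{prop:main1}. Setting $\mathbf{m}' := (\sg(v))_{\ball(z,R/2)}$ and forming $\tilde{f}$, $\tilde{u}$, $\tilde{v}$ via \eqref{eq:tildef} and \eqref{eq:tildew}, the shift lemma opening Section~\ref{sec:decay} reduces the task to producing $w$, with $w = v$ on $\partial\ball(z,R/2)$, for which $\int_{\ball(z,R/2)} [\tilde{f}(\sg(\tilde{v})) - \tilde{f}(\sg(\tilde{w}))] \dif x$ is controlled. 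Radial symmetry of the mollifier $\eta$ fixes the affine map $x \mapsto \mathbf{m}'(x-z)$, so $\tilde{v} = \tilde{u}_{\delta,\varepsilon}$; this lets me apply Lemma~\ref{lem:convex} and Lemma~\ref{lem:molest} directly to the pair $(\tilde{u},\tilde{f})$ and exploit $\tilde{f}(0)=0$, $\tilde{f}'(0)=0$, $\tilde{f}\approx\mathbf{e}$.

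For radii $R/4 < t < s < R/2$ to be fixed below, I would pick a cut-off $\rho \in \hold_{c}^{\infty}(\ball(z,s);[0,1])$ with $\rho \equiv 1$ on $\ball(z,t)$ and $|\nabla\rho|\lesssim (s-t)^{-1}$, and set $w := v + \rho(u-v)$. Then $w = u$ on $\ball(z,t)$, $w = v$ on $A(z;s,R/2)$, and
\begin{align*}
\int_{\ball(z,R/2)} [\tilde{f}(\sg \tilde{v}) - \tilde{f}(\sg \tilde{w})] \dif x = \int_{\ball(z,t)} [\tilde{f}(\sg \tilde{v}) - \tilde{f}(\sg \tilde{u})] \dif x + \int_{A(z;t,s)} [\tilde{f}(\sg \tilde{v}) - \tilde{f}(\sg \tilde{w})] \dif x.
\end{align*}
The interior integrand has exactly the shape of Lemma~\ref{lem:convex}(i) applied to $(\tilde{u},\tilde{f})$, so for $t$ chosen in any sub-interval $(t_1,t_2)$ that integral is $\lesssim \tfrac{\delta+\varepsilon}{t_2 - t_1} \int_{\ball(z,R)} \tilde{f}(\sg \tilde{u}) \dif x$. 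For the annular integrand I would combine the identity $\sg\tilde{w} = \rho\sg\tilde{u} + (1-\rho)\sg\tilde{v} + \nabla\rho \odot (u-v)$, the comparison $\tilde{f}\approx\mathbf{e}$, Lemma~\ref{lem:eest}(i), Jensen's inequality (to absorb $\mathbf{e}(\sg\tilde{v})$ into a mollification of $\mathbf{e}(\sg\tilde{u})$), and Lemma~\ref{lem:molest} applied to $\tilde{u}$ with $L \sim (s-t)^{-1}$ (recalling $u-v = \tilde{u}-\tilde{u}_{\delta,\varepsilon}$) to obtain
\begin{align*}
\int_{A(z;t,s)} [\tilde{f}(\sg \tilde{v}) - \tilde{f}(\sg \tilde{w})] \dif x \lesssim \Bigl(1 + \max\Bigl\{\tfrac{\varepsilon^{2}}{(s-t)^{2}}, \tfrac{\varepsilon^{p}}{(s-t)^{p}}\Bigr\}\Bigr) \int_{A(z;t-\varepsilon,s+\varepsilon)} \mathbf{e}(\sg \tilde{u}) \dif x.
\end{align*}

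The closing step mirrors the pigeonhole argument in Proposition~\ref{prop:main1}: I would introduce $a_k := R/4 + k(R/400)\excesso^{\gamma/2}(u;z,R)$ for $k = 0,1,\ldots,8K$ with $K \simeq \excesso^{-\gamma/2}(u;z,R)$, apply Lemma~\ref{lem:convex}(i) on each slot $(a_{8k-1}, a_{8k})$ to obtain $t_k$ realising the interior estimate, set $s_k := t_k + (R/400)\excesso^{\gamma/2}(u;z,R)$, and observe that the thickened annuli $\mathcal{A}_k := \ball(z, s_k + 2\varepsilon) \setminus \ball(z, t_k - 2\varepsilon)$ are disjoint subsets of $\ball(z,R)$. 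With these choices both ratios $(\delta+\varepsilon)/(a_{8k}-a_{8k-1})$ and $\varepsilon/(s_k - t_k)$ are $\lesssim \excesso^{\gamma/2}$, and the pigeonhole $\sum_k \int_{\mathcal{A}_k} \mathbf{e}(\sg\tilde{u}) \leq \int_{\ball(z,R)} \mathbf{e}(\sg\tilde{u})$ furnishes an index $k^{\ast}$ for which $\int_{\mathcal{A}_{k^{\ast}}} \mathbf{e}(\sg\tilde{u}) \lesssim \excesso^{\gamma/2} \int_{\ball(z,R)} \mathbf{e}(\sg\tilde{u})$. Summing both contributions and replacing $\int_{\ball(z,R)} \mathbf{e}(\sg\tilde{u})$ by $\excenew(u;z,R)$ via $|\mathbf{m}-\mathbf{m}'|\lesssim \excesso^{\beta}(u;z,R)$ from Lemma~\ref{lem:adjust} together with $\mathbf{e}(a+b)\lesssim \mathbf{e}(a)+\mathbf{e}(b)$ produces the claim with $h(t) = t^{\gamma/2}$. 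The main delicacy to monitor, precisely as in Proposition~\ref{prop:main1}, is fitting $K \gtrsim \excesso^{-\gamma/2}$ disjoint slots into a strip of length $R/4$ while keeping each slot wide enough ($\gtrsim R\,\excesso^{\gamma/2}$) for the Lemma~\ref{lem:convex}(i) bound to decay --- a balance that holds as soon as $\excesso(u;z,R) < a$ for the absolute constant from Proposition~\ref{prop:main1}.
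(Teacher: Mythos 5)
Your plan contains a genuine gap at the very first step, and it is precisely the step that the paper's construction with $w_{1},w_{2},w_{3}$ is designed to handle.

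You want to bound $\devi(v;z,R/2)$ from above by plugging the competitor $w:=v+\rho(u-v)$ into the infimum appearing in the definition of $\devi$. But that infimum is taken over $w\in\hold^{1,\alpha}(\ball(z,R/2);\R^{n})$ with $w=v$ on the boundary, whereas your $w$ involves the minimiser $u$ directly. When $p=1$, $u$ is merely of class $\bd$, and for $1<p<\infty$ it is only in $\sobo^{1,p}$; in neither case is $w$ of class $\hold^{1,\alpha}$ (nor even Lipschitz). Consequently the inequality $\devi(v;z,R/2)\leq\int_{\ball(z,R/2)}[\tilde{f}(\sg\tilde{v})-\tilde{f}(\sg\tilde{w})]\dif x$ is not available: substituting an object from a \emph{strictly larger} class than $\hold^{1,\alpha}$ into the infimum produces a quantity that could lie \emph{below} the $\hold^{1,\alpha}$-infimum, which would break the direction of the inequality you need. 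You would have to argue that the relaxed infimum over $\bd$ (or $\sobo^{1,p}$) competitors coincides with the $\hold^{1,\alpha}$-infimum before this step is legitimate, and that identification is not made.

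A second, closely related symptom is that your argument never invokes local $\bd$-minimality of $u$: the only tools you use are Lemma~\ref{lem:convex}(i), Lemma~\ref{lem:molest} and Jensen's inequality, all of which the paper explicitly flags as purely convexity-based (``do not take into account minimality of $u$''). The paper's route is different in a crucial way: it introduces Lipschitz near-minimisers $w_{1}\in\mathcal{A}$, $w_{2}\in\mathcal{B}$ on the inner ball and the annulus, glues them to $w_{3}\in\sobo^{1,\infty}$, and then uses minimality of $\tilde{u}$ with the comparison map $w_{3}+\eta(\tilde{u}-\tilde{v})$ (which agrees with $\tilde{u}$ on $\partial\ball(z,s)$) to replace $\int\tilde{f}(\sg w_{3})$ by $\int\tilde{f}(\sg\tilde{u})$ up to controlled error. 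This is the step that both cures the admissibility defect, since $w_{3}$ is Lipschitz, and injects minimality into the estimate. Your annular and interior bounds, together with the pigeonhole selection of slots, are correct in isolation and align with the paper's final iteration; but without the $w_{1},w_{2},w_{3}$ device (or an explicit relaxation argument identifying the $\hold^{1,\alpha}$-infimum with the $\bd$- or $\sobo^{1,p}$-infimum), the opening inequality does not hold as written.
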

\begin{proof}
We fix $R/2<t<s<R$ and a constant $a>0$. We then put 
\begin{align*}
&\mathcal{A}:=\{w\in \sobo^{1,\infty}(\ball(z,t);\R^{n})\colon\; w = \tilde{v}\;\text{on}\;\partial\ball(z,t)\}\\
&\mathcal{B}:=\{w\in \sobo^{1,\infty}(\ball(z,s)\setminus\ball(z,t);\R^{n})\colon\; w = \tilde{v}\;\text{on}\;\partial\ball(z,t)\cup\partial\ball(z,s)\}
\end{align*}
and find $w_{1}\in\mathcal{A}$ and $w_{2}\in\mathcal{B}$ such that 
\begin{align*}
&\int_{\ball(z,t)}\tilde{f}(\sg(w_{1}))\dif x \leq \inf_{w\in\mathcal{A}}\int_{\ball(z,t)}\tilde{f}(\sg(w))\dif x + a \\
&\int_{\ball(z,s)\setminus\ball(z,t)}\tilde{f}(\sg(w_{1}))\dif x \leq \inf_{w\in\mathcal{B}}\int_{\ball(z,s)\setminus\ball(z,t)}\tilde{f}(\sg(w))\dif x + a. 
\end{align*}
Since $w_{1},w_{2}$ are Lipschitz and coincide on $\partial\ball(z,t)$, we deduce that $w_{3}:=\mathbbm{1}_{\overline{\ball(z,t)}}w_{1}+\mathbbm{1}_{\ball(z,s)\setminus\ball(z,t)}w_{2}$ belongs to $\sobo^{1,\infty}(\ball(z,t);\R^{n})$. Recalling the definition of the deviation functional $\devi$, we then obtain 
\begin{align*}
\devi(\tilde{v};z,t)&\leq \int_{\ball(z,t)}\tilde{f}(\sg(\tilde{v}))\dif x - \int_{\ball(z,t)}\tilde{f}(\sg(w_{1}))\dif x + a \\
&\leq \int_{\ball(z,s)}\tilde{f}(\sg(\tilde{v}))-\tilde{f}(\sg(\tilde{u}))\dif x - \int_{\ball(z,s)}\tilde{f}(\sg(w_{3}))-\tilde{f}(\sg(\tilde{u}))\dif x + 2a.
\end{align*}
Next, if we let $\rho$ be the function given by Eq. \eqref{eq:cutoff}, we obtain by exploiting minimality of $\tilde{u}$ that
\begin{align*}
\int_{\ball(z,s)}\tilde{f}(\sg(\tilde{u}))-\tilde{f}(\sg(w_{3}))\dif x & \leq \int_{\ball(z,s)}\tilde{f}(\sg(w_{3}+\eta(\tilde{u}-\tilde{v})))-\tilde{f}(\sg(w_{3}))\dif x \\
& \leq \int_{A(z;t,s)}\tilde{f}(\sg(w_{3}+\eta(\tilde{u}-\tilde{v})))-\tilde{f}(\sg(w_{3}))\dif x \\
& \lesssim \int_{A(z;t,s)}\mathbf{e}(\sg(w_{3}))+\mathbf{e}(\sg(\tilde{u}))+\mathbf{e}(\sg(\tilde{v}))\\ & +\mathbf{e}\left(\frac{\tilde{u}-\tilde{v}}{s-t}\right)\dif x. 
\end{align*}
Now note that $w_{3}=w_{2}$ on $\ball(z,s)\setminus\ball(z,t)$ and therefore
\begin{align*}
\int_{A(z;t,s)}\mathbf{e}(\sg(w_{3}))\dif x \lesssim \int_{A(z;t,s)}\tilde{f}(\sg(w_{3}))\dif x \lesssim \int_{A(z;t,s)}\tilde{f}(\sg(\tilde{v}))\dif x + a.
\end{align*}
Going back to the estimate of $\mathbf{III}$ in the proof of the preceding Proposition \ref{prop:main1}, we see that the same estimates used there also apply to the present problem, yielding 
\begin{align*}
\int_{\ball(z,s)}\tilde{f}(\sg(\tilde{u}))-\tilde{f}(\sg(w_{3}))\dif x \lesssim \int_{A(z,t-2\varepsilon,s+2\varepsilon)}\tilde{f}(\sg(\tilde{u}))\dif x + a.
\end{align*}
Because $a>0$ was assumed arbitrary and the constants which implicitely appear in the above estimates do not depend on $a$, we may pass $a\to 0$ in these estimates. In combination with our above preliminary estimate on $\devi(\tilde{v};z,R/2)$, we then obtain 
\begin{align*}
\devi(v;z,R/2) \lesssim  \int_{\ball(z,s)}\tilde{f}(\sg(\tilde{v}))-\tilde{f}(\sg(\tilde{u}))\dif x + \int_{A(z,t-2\varepsilon,s+2\varepsilon)}\tilde{f}(\sg(\tilde{u}))\dif x.
\end{align*}
We now view this last estimate as the substitute of Eq. \eqref{eq:estml} in the proof of Proposition \ref{prop:main1}. Working from this observation and employing the same strategy to find suitable choices of $s$ and $t$ as outlined in the proof of Proposition \ref{prop:main1}, the statement of the present proposition follows. 
\end{proof}
In the remaining part of this section we shall combine the preceding two propositions \ref{prop:main1} and \ref{prop:propdecay} with Lemma \ref{lem:adjust} and so obtain the desired decay estimate that will be the main ingredient for proving the partial regularity result. 
\begin{proposition}\label{prop:me}
There exists $\tau>0$ with the following property: If for $z\in\Omega$ and $R>0$ with $\ball(z,R)\subset\Omega$ a function $u\in X(\Omega)$ minimises 
\begin{align*}
\mathscr{F}_{R}[u]:=\int_{\ball(z,R)}f(\sg(u))\dif x 
\end{align*}
with its own boundary values and the excess satisfies $\excesso(u;z,R)<\tau$, then for any $0<r<R/2$ we have the estimate 
\begin{align}
\excesso(u;z,r)\lesssim ((r/R)^{n+2}+\vartheta(\excesso(u;z,R))(1+(R/r)^{n+1}))\excesso(u;z,R)
\end{align}
with a bounded an non--decreasing function $\vartheta\colon\R_{0}^{+}\to\R_{0}^{+}$ such that $\lim_{t\searrow 0}\vartheta(t)=0$. 
\end{proposition}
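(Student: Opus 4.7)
The strategy is to glue Propositions \ref{prop:main1}, \ref{prop:propdecay} and \ref{prop:1} together through the mollified comparison map $v := u_{\delta,\varepsilon}$. Set $\mathbf{m} := (\sg(u))_{z,R}$ and choose $\delta=\varepsilon = \tfrac{1}{16}R\,\excesso(u;z,R)^{\gamma}$, with $\gamma \in (0,1/(n+2\alpha))$ as in Lemma \ref{lem:adjust}. Pick $\tau>0$ small enough that $\excesso(u;z,R)<\tau$ ensures simultaneously (i) applicability of Proposition \ref{prop:main1} (i.e.\ $\tau\leq a$), and (ii) the smallness hypothesis $\mathbf{t}(v;z,R/2)<\min\{\sigma/c_{1},1\}$ required by Proposition \ref{prop:1}; by Lemma \ref{lem:adjust} this reduces to $c_{0}\,\excesso(u;z,R)^{\beta}$ being sufficiently small, which is achieved by shrinking $\tau$.

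For $0<r<R/4$, apply Proposition \ref{prop:1} to $v$ on $\ball(z,R/2)$ at radius $2r$. In the case $1\leq p\leq 2$ this gives
\begin{align*}
\int_{\ball(z,2r)}|\sg(v)-(\sg(v))_{z,2r}|^{2}\,\dif x
&\lesssim \Bigl(\tfrac{r}{R}\Bigr)^{n+2}\!\int_{\ball(z,R/2)}|\sg(v)-(\sg(v))_{z,R/2}|^{2}\,\dif x \\
&\quad +\vartheta_{0}(\mathbf{t}(v;z,R/2))\!\int_{\ball(z,R/2)}|\sg(v)-\mathbf{m}|^{2}\,\dif x + \devi(v;z,R/2),
\end{align*}
with the obvious $p$-analogue for $p\geq 2$. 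Corollary \ref{cor:adjust} supplies the pointwise bound $|\sg(v)-\mathbf{m}|\leq C$ on $\ball(z,R/2)$, so Lemma \ref{lem:eest}(ii) lets us trade $|\cdot|^{2}$ (resp.\ $|\cdot|^{p}$) for $\mathbf{e}(\cdot)$ throughout. This rewrites the left-hand side as $\excenew(v;z,2r)$ and, via the two estimates of Corollary \ref{cor:adjust}, bounds the first two right-hand integrals by multiples of $\excenew(u;z,R)$. Estimating $\devi(v;z,R/2)\lesssim h(\excesso(u;z,R))\,\excenew(u;z,R)$ by Proposition \ref{prop:propdecay} and replacing $\vartheta_{0}(\mathbf{t}(v;z,R/2))$ by $\vartheta_{0}(c_{0}\,\excesso(u;z,R)^{\beta})$ thanks to Lemma \ref{lem:adjust} yields
\begin{align*}
\excenew(v;z,2r)\lesssim \Bigl[\Bigl(\tfrac{r}{R}\Bigr)^{n+2} + \vartheta_{0}(c_{0}\,\excesso(u;z,R)^{\beta}) + h(\excesso(u;z,R))\Bigr]\excenew(u;z,R).
\end{align*}

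Feeding this into Proposition \ref{prop:main1},
\begin{align*}
\excenew(u;z,r)\lesssim \excenew(v;z,2r) + h(\excesso(u;z,R))\bigl(1+(R/r)^{n+1}\bigr)\excenew(u;z,R),
\end{align*}
collecting the $\excesso$--dependent prefactors into a single bounded, non--decreasing function $\vartheta\colon\R_{0}^{+}\to\R_{0}^{+}$ with $\vartheta(t)\to 0$ as $t\searrow 0$, and dividing by $r^{n}$ produces the claimed bound on $\excesso(u;z,r)$. The residual range $R/4\leq r<R/2$ is harmless, since in that regime $\excesso(u;z,r)\leq (R/r)^{n}\excesso(u;z,R)\lesssim \excesso(u;z,R)$, which is dominated by the $\vartheta$--term on the right-hand side (upon enlarging $\vartheta$ by a constant on a neighbourhood of $0$ if necessary).

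The main technical hurdle is ensuring that $v$ really satisfies the running smallness hypothesis of Proposition \ref{prop:1}; this is exactly why Lemma \ref{lem:adjust} dictates the precise scaling $\delta=\varepsilon=\tfrac{1}{16}R\,\excesso(u;z,R)^{\gamma}$. A secondary, purely bookkeeping subtlety is the transition between the $L^{2}$/$L^{p}$-type decay supplied by Proposition \ref{prop:1} and the $\mathbf{e}$-type excess $\excenew$; this transition hinges on the pointwise bound for $\sg(v)-\mathbf{m}$ from Corollary \ref{cor:adjust} together with the equivalence $\mathbf{e}(\mathbf{A})\approx|\mathbf{A}|^{2}$ on bounded sets (Lemma \ref{lem:eest}(ii)).
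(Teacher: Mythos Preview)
Your proposal is correct and follows essentially the same route as the paper: choose $\delta=\varepsilon$ via Lemma~\ref{lem:adjust}, apply Proposition~\ref{prop:1} to $v=u_{\delta,\varepsilon}$ (converting between $|\cdot|^{2}$ and $\mathbf{e}(\cdot)$ via the pointwise bound and Lemma~\ref{lem:eest}(ii)), absorb the right-hand integrals using Corollary~\ref{cor:adjust}, control $\devi$ by Proposition~\ref{prop:propdecay}, and feed everything into Proposition~\ref{prop:main1}.

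One small correction: your handling of the residual range $R/4\leq r<R/2$ is misstated. You cannot ``enlarge $\vartheta$ by a constant on a neighbourhood of $0$'' without destroying the essential property $\lim_{t\searrow 0}\vartheta(t)=0$, which is precisely what the subsequent iteration in Theorem~\ref{thm:final} needs. The fix is simpler: for $r\in[R/4,R/2)$ one has $(r/R)^{n+2}\geq 4^{-(n+2)}$, while $\excesso(u;z,r)\lesssim (R/r)^{n}\excesso(u;z,R)\leq 4^{n}\excesso(u;z,R)$, so the first term $(r/R)^{n+2}\excesso(u;z,R)$ on the right already dominates $\excesso(u;z,r)$ up to a fixed constant. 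No modification of $\vartheta$ is required.
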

\begin{proof}
We may assume that $0<\tau<1$; the precise choice of $\tau$ will be determined throughout the proof. Let $u\in X(\Omega)$ be a local minimiser of $\mathscr{F}$. We denote $\mathbf{m}:=(\sg(u))_{z,R}$ and choose the mollification parameters $\delta$ and $\varepsilon$ as in Lemma \ref{lem:adjust} so that $\mathbf{t}(u_{\delta,\varepsilon};z,R/2)\leq c'\excesso^{\beta}(u;z,R)$, where $\beta>0$ is a constant provided by Lemma \ref{lem:adjust}. Hence, going back to Proposition \ref{prop:1} and taking $\tau$ so small such that $\excesso(u;z,R)<\tau$ implies $\mathbf{t}(u_{\delta,\varepsilon};z,R/2)<\min\{\sigma/c_{1},1\}$, we conclude that $v=u_{\delta,\varepsilon}$ satisfies 
\begin{align*}
\excenew(v;z,r) &\lesssim \left(\frac{r}{R}\right)^{n+2}\excenew(v;z,R/2)+\vartheta(\mathbf{t}(u;z,R/2))\int_{\ball(z,R/2)}|\sg(v)-\mathbf{m}|^{2}\dif x \\ & + \devi(v;z,R/2) &\\
&\lesssim ((r/R)^{n+2}+\vartheta(\mathbf{t}(u;z,R/2)))\excenew(u;z,R) + \devi(v;z,R/2), 
\end{align*}
where the second estimate is due to Corollary \ref{cor:adjust}. Diminishing $\tau$ further if necessary, we can assume in addition that $0<\tau<a$ with the smallness constant $a>0$ from Proposition \ref{prop:main1}. We then have, since $r<R/4$ implies that $2r<R/2$, 
\begin{align*}
\excenew(u;z,r) &\leq c\excenew(v;z,2r)+ch(\excesso(u;z,R))(1+(R/r)^{n+1})\excenew(u;z,R)\\
& \lesssim ((r/R)^{n+2}+\vartheta(\mathbf{t}(u;z,R/2)) + ch(\excesso(u;z,R))(1+(R/r)^{n+1}))\excenew(u;z,R) \\ & + \devi(v;z,R/2)\\
& \lesssim ((r/R)^{n+2}+\vartheta(\mathbf{t}(u;z,R/2)) + ch(\excesso(u;z,R))(1+(R/r)^{n+1}))\excenew(u;z,R) \\ & +  h(\excesso(u;z,R))\excenew(u;z,R), 
\end{align*} 
the last estimate being valid due to Proposition \ref{prop:propdecay}. Regrouping terms, we can immediately infer the claim. 
\end{proof}
\subsection{Iteration and Conclusion.}\label{sec:it}
In this section we shall show how an iteration of the estimates provided in Proposition \ref{prop:me} yields Theorem \ref{thm:1}. Regarding conditions (i)--(iii) in the following theorem, we see that they are clearly satisfied provided $f$ is a convex $\hold^{2}$--integrand having positive definite Hessian on the entire space $\R_{\sym}^{n\times n}$. Due to Lebesgue's Differentiation Theorem, condition (iii) will be satisfied at $\mathscr{L}^{n}$--any point $x_{0}\in\Omega$ so that the partial regularity assertion of Theorem  \ref{thm:1} will immediately follow. 
\begin{theorem}\label{thm:final}
Let $f\colon \R_{\sym}^{n\times n}\to\R$ be a convex function of $p$--growth, $1\leq p<\infty$ and let $u\in X(\Omega)$ be a local minimiser of the functional $\mathfrak{F}$ in $\Omega$. Let $x_{0}\in\Omega$ be and $\mathbf{m}\in \R_{\sym}^{n\times n}$ be such that 
\begin{itemize}
\item[\emph{(i)}] $f$ is of class $\hold^{2}(U)$ in an open neighbourhood $U$ of $\mathbf{m}$, 
\item[\emph{(ii)}] there exists $\lambda>0$ such that for all $\mathbf{Z}\in \R_{\sym}^{n\times n}$ we have 
\begin{align*}
\langle f''(\mathbf{m})\mathbf{Z},\mathbf{Z}\rangle\geq \lambda|\mathbf{Z}|^{2},
\end{align*}
\item[\emph{(iii)}] $\mathbf{m}$ is a Lebesgue point for $\sg(u)$ at $x_{0}$, that is, 
\begin{align*}
\lim_{r\to 0}\left(\dashint_{\ball(x_{0},r)}|\mathscr{E}u-\mathbf{m}|^{p}\dif x + \upsilon(p)\frac{|\E^{s}u|(\ball(x_{0},r))}{\mathscr{L}^{n}(\ball(x,r))}\right)= 0, 
\end{align*}
where $\upsilon(1)=1$ and $\upsilon(p)=0$ for $1<p<\infty$.
\end{itemize}
Then there exists a neighbourhood $\mathcal{O}$ of $x_{0}$ in which the minimiser $u$ is of class $\hold^{1,\alpha}$ for all $0<\alpha<1$. 
\end{theorem}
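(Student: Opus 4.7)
The plan is to iterate the one-step decay estimate of Proposition~\ref{prop:me} to produce Campanato-type decay for $\sg(u)$ near $x_0$, and then to upgrade H\"{o}lder regularity of $\sg(u)$ to H\"{o}lder regularity of $\D u$ via the boundedness of the singular integral $\sg\mapsto \D$ on H\"{o}lder--Zygmund classes, as mentioned after Theorem~\ref{thm:1}. The first step is the uniform set-up on a neighborhood of $x_0$. By continuity of $f''$ at $\mathbf{m}$ and positive definiteness there, there exists $\sigma>0$ with $f\in\hold^{2}(\mathbb{B}(\mathbf{m},\sigma))$, a modulus $\omega$ for $f''$ on $\mathbb{B}(\mathbf{m},\sigma)$, and uniform ellipticity bounds $\lambda/2,2\Lambda$ on that ball. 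Let $\tau>0$ be the threshold produced by Proposition~\ref{prop:me}. Hypothesis (iii), combined with Lemma~\ref{lem:eest}, forces both $\excesso(u;x_{0},R)\to 0$ and $(\sg(u))_{x_{0},R}\to\mathbf{m}$ as $R\searrow 0$. By absolute continuity of the maps $z\mapsto \excesso(u;z,R)$ and $z\mapsto (\sg(u))_{z,R}$ at fixed small $R$, one extracts $R_{0}>0$, a neighborhood $\mathcal{O}\Subset\Omega$ of $x_{0}$, and a threshold $\tau_{0}\in(0,\tau]$ (to be shrunk in the next step) such that
\begin{align*}
\excesso(u;z,R_{0})<\tau_{0}\qquad\text{and}\qquad |(\sg(u))_{z,R_{0}}-\mathbf{m}|<\sigma/4\qquad\text{for all }z\in\mathcal{O}.
\end{align*}

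The second step is the iteration. Fix an exponent $\alpha\in(0,1)$ and pick $\theta\in(0,1/2)$ so small that $2C_{\ast}\theta^{n+2-2\alpha}\leq 1$, where $C_{\ast}$ is the implicit constant in Proposition~\ref{prop:me}. Next, shrink $\tau_{0}$ so that $2C_{\ast}\vartheta(\tau_{0})(1+\theta^{-(n+1)})\leq \theta^{2\alpha}$; this is possible since $\vartheta(t)\to 0$ as $t\to 0$. The hypotheses of Proposition~\ref{prop:me} are then met at radius $R_{0}$ with $\mathbf{m}$ replaced by $(\sg(u))_{z,R_{0}}$ (which still lies well inside $\mathbb{B}(\mathbf{m},\sigma)$, so the ellipticity and $\hold^{2}$-regularity constants of $f$ survive up to harmless factors), and the estimate applied with $r=\theta R_{0}$ yields $\excesso(u;z,\theta R_{0})\leq \theta^{2\alpha}\excesso(u;z,R_{0})$. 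A standard average-shift estimate
\begin{align*}
|(\sg(u))_{z,\theta R_{0}}-(\sg(u))_{z,R_{0}}|\lesssim \theta^{-n/2}\,\excesso(u;z,R_{0})^{1/2},
\end{align*}
combined with the geometric decay $\sum_{j\geq 0}\theta^{\alpha j}<\infty$, shows (after a further shrinking of $\tau_{0}$ if necessary) that the iterate average $(\sg(u))_{z,\theta^{k}R_{0}}$ stays within $\sigma/2$ of $\mathbf{m}$ for every $k$. Hence the assumptions of Proposition~\ref{prop:me} are preserved at every dyadic scale, and a simple induction gives $\excesso(u;z,\theta^{k}R_{0})\leq \theta^{2\alpha k}\excesso(u;z,R_{0})$ for every $k\in\N$ and every $z\in\mathcal{O}$. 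Filling in the intermediate radii by monotonicity yields $\excesso(u;z,r)\lesssim (r/R_{0})^{2\alpha}$ uniformly in $z\in\mathcal{O}$ and $0<r\leq R_{0}$.

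The third step is the Campanato-type conclusion. Because the averages $(\sg(u))_{z,r}$ stay bounded near $\mathbf{m}$, Lemma~\ref{lem:eest}(ii) gives $\mathbf{e}(\xi)\approx|\xi|^{2}$ on the arguments that actually occur, hence
\begin{align*}
\dashint_{\ball(z,r)}|\sg(u)-(\sg(u))_{z,r}|^{2}\dif x\lesssim r^{2\alpha}\qquad\text{for all }z\in\mathcal{O},\; 0<r\leq R_{0}.
\end{align*}
The Campanato--Meyers identification $\mathcal{L}^{2,n+2\alpha}\cong\hold^{0,\alpha}$ then yields $\sg(u)\in \hold^{0,\alpha}(\mathcal{O};\R_{\sym}^{n\times n})$, and the boundedness of the singular integral $\sg\mapsto \D$ on the H\"{o}lder--Zygmund class $\besov^{\alpha}_{\infty,\infty}=\hold^{0,\alpha}$ finally gives $\D u\in \hold^{0,\alpha}(\mathcal{O}';\R^{n\times n})$ on any neighborhood $\mathcal{O}'\Subset\mathcal{O}$, i.e.\ $u\in\hold^{1,\alpha}(\mathcal{O}';\R^{n})$. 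Since $\alpha\in(0,1)$ was arbitrary, the statement follows. The only delicate point is the second step, where one must propagate \emph{two} smallness conditions simultaneously---smallness of the excess \emph{and} proximity of the average to $\mathbf{m}$---across infinitely many dyadic scales; this is made possible precisely by the geometric decay of the excess, which renders the consecutive shifts of the averages summable.
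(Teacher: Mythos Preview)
Your proposal is correct and follows essentially the same route as the paper: set up uniform $\hold^{2}$/ellipticity data on a small ball $\mathbb{B}(\mathbf{m},\sigma)$, iterate the one-step decay of Proposition~\ref{prop:me} at a fixed ratio $\theta\in(0,1/2)$ while simultaneously propagating (by the summable average-shift) the condition $(\sg(u))_{z,\theta^{k}R_{0}}\in\mathbb{B}(\mathbf{m},\sigma)$, and conclude by Campanato plus the Korn/singular-integral transfer from $\sg(u)$ to $\D u$. One small imprecision: in your third step you invoke Lemma~\ref{lem:eest}(ii) to claim an $\lebe^{2}$--Campanato bound, but that lemma needs a pointwise bound on $\sg(u)-(\sg(u))_{z,r}$ which you do not have a~priori; the paper instead deduces the $\lebe^{p}$--Campanato bound $\dashint|\sg(u)-(\sg(u))_{z,r}|^{p}\lesssim r^{\alpha p}$ for $1\leq p\leq 2$ (which follows from $|\xi|^{p}\lesssim \mathbf{e}(\xi)+\mathbf{e}(\xi)^{p/2}$), and this is already enough for the Campanato--Meyers identification.
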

\begin{proof}
We follow closely the original proof in \cite{AG}. First we justify that (i)--(iii) imply the assumptions of Proposition \ref{prop:me}: Condition (i) ensures that we find $\sigma>0$ such that $f''$ is uniformly continuous on $\mathbb{B}(\mathbf{m},3\sigma)$. Therefore we find a modulus of continuity $\omega\colon\R_{0}^{+}\to\R_{0}^{+}$, i.e., a non--decreasing concave function with $\lim_{t\searrow 0}\omega(t)=0$, such that $|f''(\mathbf{m}')-f''(\mathbf{m}'')|\leq \omega(|\mathbf{m}'-\mathbf{m}''|)$ holds for all $\mathbf{m}',\mathbf{m}''\in\mathbb{B}(\mathbf{m},3\sigma)$. Referring to condition (i) and occasionally diminishing $\sigma$, we may further assume without loss of generality that there exist 
\begin{align*}
\lambda |\mathbf{Z}|^{2}\leq f''(\mathbf{m}')\mathbf{Z}\cdot\mathbf{Z}\leq\Lambda |\mathbf{Z}|^{2}
\end{align*}
holds for all $\mathbf{m}'\in\mathbb{B}(\mathbf{m},3\sigma)$ with two constants $0<\lambda\leq \Lambda \leq \infty$. Moreover, we see by the triangle inequality that these estimates also hold in $\mathbb{B}(\mathbf{m}',\sigma)$ for any $\mathbf{m}'\in\ball(\mathbf{m},2\sigma)$. In conclusion, Proposition \ref{prop:me} is available and provides us with $\tau>0$, a non--decreasing and bounded function $\vartheta\colon\R_{0}^{+}\to\R_{0}^{+}$ and a constant $c>0$ such that for all $0<r<R/2$ we have
\begin{align}\label{eq:iterative}
\excenew(u;z,r)\leq c((r/R)^{n+2}+\vartheta(\excesso(u;z,R))(1+(R/r)^{n+1}))\excenew(u;z,R)
\end{align}
provided $(\sg(u))_{x,R}=\mathbf{m}'$ and $\excesso(u;x,R)<\tau$. We shall use this to prove the following intermediate claim from which the assertion of the theorem is immediate: \\

\emph{Claim.} Let $0<\alpha<1$ be arbitrary. Then there exists $\tau'>0$ and $\tau_{0}>0$ such that the following holds: If $\ball(x,R)\subset\Omega$ and $\mathbf{m}'\in\mathbb{B}(\mathbf{m},\sigma)$ are such that $(\sg(u))_{x,R}=\mathbf{m}'$ and $\excesso(u;x,R)<\min\{\tau,\tau'\}$, then there holds $\excesso(u;x,\tau_{0}^{j}R)\leq \tau_{0}^{2\alpha j}\excesso(u;x,R)$ for all $j\in\mathbb{N}_{0}$. \\

We will verify the claim by induction on $j$. To do so, we put $\mathbf{m}_{j}:=(\sg(u))_{x,\tau_{0}^{k}R}$ and show that for all $j\in\mathbb{N}_{0}$ we have that 
\begin{align}\label{eq:induction}
|\mathbf{m}_{j}-\mathbf{m}'|< \sigma\sum_{k=0}^{j}2^{-k-1}\;\;\;\text{and}\;\;\;\excesso(u;x,\tau_{0}^{2\alpha j})\leq \tau_{0}^{2\alpha j}\excesso(u;x,R). 
\end{align}
Before embarking on the induction, we give a suitable candidate for $\tau_{0}$ and $\tau'$. From Lemma \ref{lem:eest} we know that there are two constants $C_{1},C_{2}>0$ such that $\mathbf{e}(\mathbf{A}+\mathbf{B})\leq C_{1}(\mathbf{e}(\mathbf{A})+\mathbf{e}(\mathbf{B}))$ for all $\mathbf{A},\mathbf{B}\in \R_{\sym}^{n\times n}$ and if $|\mathbf{A}|\leq 3$, then $\mathbf{e}(\mathbf{A})\leq C_{2}|\mathbf{A}|^{2}$. Recalling the constant $c>0$ from Eq. \eqref{eq:iterative}, we put 
\begin{align}
\tau_{0}:=\min\{2^{-\frac{1}{\alpha}},(2C_{1})^{-\frac{1}{2}},\frac{1}{2}\sigma(2C_{1}C_{2})^{-\frac{1}{2}},(2c)^{\frac{1}{2\alpha-2}}\}.
\end{align}
Having defined $\tau_{0}$, we pick $\tau'>0$ such that $\tau'<\tau_{0}^{n+2}$ and $0\leq t < \tau'$ implies $\vartheta(t)\leq \frac{1}{2}\tau_{0}^{2n+3}$, where $\vartheta$ is the function appearing in estimate \eqref{eq:iterative}. The latter property can be achieved because we have that $\lim_{t\searrow 0}\vartheta(t)=0$. 

We turn back to the induction. The intial step is certainly fulfilled. Now suppose that \eqref{eq:induction} holds for $j\in\mathbb{N}$. We record that in this case $|\mathbf{m}_{j}-\mathbf{m}'|<2\sigma$ holds as well and, since $\tau_{0}<1$, $\excesso(u;x,\tau_{0}^{j}R)\leq \tau$. We conclude that \eqref{eq:iterative} holds with $R$ being replaced by $\tau_{0}^{j}R$ and all $0<r<\tau_{0}^{j}R/2$. Now notice that with our choice of $\tau_{0}$ and $\tau'$, we also have that  $\excesso(u;x,\tau_{0}^{j+1}R)\leq \tau'$, $\tau_{0}^{-2-2n}\vartheta(\excesso(u;x,\tau_{0}^{j}R))\leq \frac{1}{2}$ and $\tau_{0}^{2-2\alpha}\leq 1/(2c)$. Therefore, using Eq. \eqref{eq:iterative}, we deduce that 
\begin{align*}
\excesso(u;v,\tau_{0}^{j+1}R) & = \tau_{0}^{-nj-n}R^{-n}\excenew(u;x,\tau_{0}^{j+1}R) \\
& \leq c\tau_{0}^{-n}(\tau_{0}^{n+2}+\vartheta(\excesso(u;x,\tau_{0}^{j}R))(1+\tau_{0}^{-n-1}))\excesso(u;x,\tau_{0}^{j}R)\\ 
& \leq c(\tau_{0}^{2}+\vartheta(\excesso(u;x,\tau_{0}^{j}R))(\tau_{0}^{-n}+\tau_{0}^{-2n-1}))\excesso(u;x,\tau_{0}^{j}R)\\ 
& \leq c\tau_{0}^{2-2\alpha}\tau_{0}^{2\alpha}(1+\vartheta(\excesso(u;x,\tau_{0}^{j}R))(\tau_{0}^{-n-2}+\tau_{0}^{-2n-3}))\excesso(u;x,\tau_{0}^{j}R)\\
& \leq c\tau_{0}^{2-2\alpha}\tau_{0}^{2\alpha}(1+\vartheta(\excesso(u;x,\tau_{0}^{j}R))(2\tau_{0}^{-2n-3}))\excesso(u;x,\tau_{0}^{j}R)\\ 
& \leq 2c\tau_{0}^{2-2\alpha}\tau_{0}^{2\alpha}\excesso(u;x,\tau_{0}^{j}R)\\ 
& \leq \tau_{0}^{2\alpha}\excesso(u;x,\tau_{0}^{j}R) \leq \tau_{0}^{2\alpha(j+1)}\excesso(u;x,R),
\end{align*}
where the last inequality holds by induction hypothesis. We now prove the remaining part of the claim; for this we abbreviate $\ball_{j}:=\ball(x,\tau_{0}^{j}R)$, $j\in\mathbb{N}$. We estimate, using Lemma \ref{lem:eest}(i) and performing a change of variables in the second step,
\begin{align*}
\mathbf{e}(\mathbf{m}_{j+1}-\mathbf{m}_{j}) & = \dashint_{\ball_{j+1}}\mathbf{e}(\mathbf{m}_{j+1}-\sg(u)(y)+\sg(u)(y)-\mathbf{m}_{j})\dif y \\
& \leq C_{1}\,\left(\dashint_{\ball_{j+1}}\mathbf{e}(\sg(u)(y)-\mathbf{m}_{j+1})\dif y + \tau_{0}^{-n}\dashint_{\ball_{j}}\mathbf{e}(\mathbf{m}_{j}-\sg(u)(y))\dif y\right)\\
& \leq C_{1}(\excesso(u;x,\tau_{0}^{j+1}R)+\tau_{0}^{-n}\excesso(u;x,\tau_{0}^{j}R))\\
& \leq C_{1}(\tau_{0}^{2\alpha(j+1)}+\tau_{0}^{-n}\tau_{0}^{2\alpha j})\excesso(u;x,R) \\
& \leq C_{1}\tau_{0}^{2\alpha j}(\tau_{0}^{2\alpha}+\tau_{0}^{-n})\excesso(u;x,R)\leq 2C_{1} \tau_{0}^{2\alpha j}\tau_{0}^{-n}\excesso(u;x,R).
\end{align*}
By our choice of $\tau_{0}$ and $\tau'$ we have $\excesso(u;x,R)\leq\tau'<\tau_{0}^{n+2}$ and $\tau_{0}^{2}\leq (2C_{1})^{-1}$. Therefore, 
\begin{align*}
2C_{1} \tau_{0}^{2\alpha j}\tau_{0}^{-n}\excesso(u;x,R)\leq 2C_{1} \tau_{0}^{2\alpha j+2}\leq \tau_{0}^{2\alpha j}
\end{align*}
and so we obtain $\mathbf{e}(\mathbf{m}_{j+1}-\mathbf{m}_{j})\leq 1$. But since $|\cdot|^{2}=(\mathbf{e}(\cdot)+1)^{2/p}-1$ we also get $|\mathbf{m}_{j+1}-\mathbf{m}_{j}|^{2}\leq 2^{2/p}-1\leq 9$ so that $|\mathbf{m}_{j+1}-\mathbf{m}_{j}|\leq 3$. Again recall that if $|\mathbf{A}|\leq 3$, then $\mathbf{e}(\mathbf{A})\leq C_{2}|\mathbf{A}|^{2}$ and thus 
\begin{align*}
|\mathbf{m}_{j+1}-\mathbf{m}_{j}|^{2} \leq C_{2}\mathbf{e}(\mathbf{m}_{j+1}-\mathbf{m}_{j})\leq 2C_{1}C_{2}\tau_{0}^{2\alpha j}\tau_{0}^{2}.
\end{align*} 
We observe that by our definition of $\tau_{0}$ there holds $2C_{1}C_{2}\tau_{0}^{2}\leq \sigma^{2}/4$ and $\tau_{0}<2^{-1/\alpha}$. Taking this into account, we then gain 
\begin{align*}
|\mathbf{m}_{j+1}-\mathbf{m}_{j}| \leq \frac{1}{2}\tau_{0}^{\alpha j}\sigma \leq 2^{-j-1}\sigma. 
\end{align*}
In conclusion, using the induction hypothesis we infer that 
\begin{align*}
|\mathbf{m}_{j+1}-\mathbf{m}_{j}|\leq |\mathbf{m}_{j+1}-\mathbf{m}|+|\mathbf{m}_{j}-\mathbf{m}|\leq \sigma\sum_{k=0}^{j+1}2^{-k-1}
\end{align*}
and thereby have proved the claim completely. \\

We shall now prove that the assumptions of the claim are satisfied, i.e., that for some $R>0$ with $\ball(x,R)\subset\Omega$ we have that $(\sg(u))_{x,R}=\mathbf{m}'$ and $\excesso(u;x,R)<\min\{\tau,\tau'\}$. For this observe that condition (iii) of the present theorem gives us 
\begin{align*}
\lim_{R\to 0}\dashint_{\ball(x_{0},R)}\sg(u)\dif x = \mathbf{m}\;\;\;\text{and}\;\;\;\lim_{R\to 0}\excesso(u;x_{0},R)=0
\end{align*}
and so we conclude by continuity of the maps $x\mapsto (\sg(u))_{x,r}$ and $x\mapsto \excesso(u;x,r)$, where $r>0$ is fixed, that  $(\sg(u))_{x,R}=\mathbf{m}'$ and $\excesso(u;x,R)<\min\{\tau,\tau'\}$ hold for some $\mathbf{m}'\in\mathbb{B}(\mathbf{m},\sigma)$ and a suitable $R>0$. In consequence, these estimates also remain valid in an open neighbourhood $\mathcal{O}$ of $x_{0}$. Now, since the assumptions of the claim are fulfilled, we are able to deduce that 
\begin{align}
\excesso(u;x,r)\leq C (r/R)^{2\alpha}\excesso(u;x,R)\;\;\;\text{for all}\;0<r<R, 
\end{align}
with a constant $C=C(n,N,\tau_{0},\alpha)>0$. In fact, let $0<r<R$ be arbitrary; then we find $j\in\mathbb{N}$ such that $\tau_{0}^{j+1}R<r\leq \tau_{0}^{j}R$ and thus, by the intermediate claim, 
\begin{align*}
\excesso(u;x,r) & \leq (\tau_{0}^{j}R/r)^{n}\excesso(u;x,\tau_{0}^{j}R) \lesssim (\tau_{0}^{j}R/r)^{n}\tau_{0}^{2\alpha j}\excesso(u;x,R)\\
& \leq \tau_{0}^{-n-2\alpha}\tau_{0}^{2\alpha (j+1)}\excesso(u;x,R) \leq C\,(r/R)^{2\alpha}\excesso(u;x,R),
\end{align*}
where $C=C(\alpha,\tau_{0})>0$ is a constant. Fixing a suitably small $R>0$, we have that $\excesso(u;x,r)\leq C r^{2}$ for all $x\in\mathcal{O}$ and $0<r<R$ with a constant $C=C(n,N,\tau_{0},\alpha,R)>0$. We finally refer to Lemma \ref{lem:eest}(ii) and thereby obtain the estimates 
\begin{align*}
&\dashint_{\ball(x,r)}|\sg(u)-(\sg(u))_{x,r}|^{p}\dif x \lesssim r^{\alpha p}\;\;\;\text{if}\;1\leq p\leq 2,\\
&\dashint_{\ball(x,r)}|\sg(u)-(\sg(u))_{x,r}|^{2}\dif x \lesssim r^{2\alpha }\;\;\;\text{if}\;2\leq p < \infty
\end{align*}
for all $x\in\mathcal{O}$ and all $0<r<R$. Since $\adop$ is elliptic, a standard application of Korn's Inequality yields the validity of the last two estimates with $\adop$ replaced by $D$. This establishes both Theorem \ref{thm:1} and \ref{thm:2}. 
\end{proof}
\begin{remark}
Interestingly, even the radially smmyetric the area--type integrands corresponding to $\m_{p}(t):=(1+|t|^{p})^{\frac{1}{p}}$, $t\in\R$, do not allow an application of theorem \ref{thm:1} if $p\neq 2$. This has been pointed out by \textsc{Schmidt} \cite{Schmidt} in the $\bv$--context and thus the above proof in the symmetric gradient context requires modification; indeed, if $p>2$, then $m''_{p}(0)=0$, and if $1<p<\infty$, the second derivative of $\m_{p}$ at the origin does not exist. We shall address the respective modifications of \textsc{Schmidt}'s work to the more general symmetric--gradient set up in a future publication.
\end{remark}


\begin{thebibliography}{99}
\bibitem{ACD} Ambrosio, L.; Coscia, A.; Dal Maso, G.: Fine properties of functions with bounded deformation. (English summary) Arch. Rational Mech. Anal. 139 (1997), no. 3, 201--238. 
\bibitem{Anz} Anzellotti, G.: The Euler equation for functionals with linear growth. 
Trans. Amer. Math. Soc. 290 (1985), no. 2, 483--501.
\bibitem{AG} Anzellotti, G.; Giaquinta, M. Convex functionals and partial regularity. Arch. Rational Mech. Anal. 102 (1988), no. 3, 243--272.
\bibitem{AG1} Anzellotti, G.; Giaquinta, M.: Existence of the displacement field for an elastoplastic body subject to Hencky's law and von Mises yield condition. Manuscripta Math. 32 (1980), no. 1-2, 101--136.
\bibitem{Beck} Beck, L.: Elliptic regularity theory. A first course. Lecture Notes of the Unione Matematica Italiana, 19. Springer, Cham; Unione Matematica Italiana, Bologna, 2016. xii+201 pp.
\bibitem{Bild} Bildhauer, M.; Fuchs, M.: On a class of variational integrals with linear growth satisfying the condition of $\mu$-ellipticity. Rend. Mat. Appl. (7) 22 (2002), 249-274 (2003).
\bibitem{CFM} Conti, S.; Faraco, D.; Maggi, F.: A new approach to counterexamples to 
$\lebe^{1}$--estimates: Korn's inequality, geometric rigidity, and regularity for gradients of separately convex functions. Arch. Ration. Mech. Anal. 175 (2005), no. 2, 287--300.
\bibitem{DT} Demengel, F.; Temam, R.: Convex functions of a measure and applications. Indiana Univ. Math. J. 33 (1984), no. 5, 673--709.
\bibitem{EvGa} Evans, L.C.; Gariepy, Ronald F. Measure theory and fine properties of functions. Studies in Advanced Mathematics. CRC Press, Boca Raton, FL, 1992. viii+268 pp.
\bibitem{FS} Fuchs, M.; Seregin, G.: Variational methods for problems from plasticity theory and for generalized Newtonian fluids. Lecture Notes in Mathematics, 1749. Springer-Verlag, Berlin, 2000. vi+269 pp.
\bibitem{Gm} Gmeinder, F.: Symmetric--Convex Functionals of Linear Growth. Accepted for publication in Journal of Elliptic and Parabolic Problems.
\bibitem{Gm1} Gmeineder, F.: $\mathcal{A}$--Quasiconvexity versus Ornstein's Non--Inequality. In Preparation.
\bibitem{GK} Gmeineder, F.; Kristensen, J.: Sobolev Regularity for Symmetric--Convex Functionals. Submitted.
\bibitem{GMS} Giaquinta, M.; Modica, G.; Sou\v{c}ek, J.: Functionals with linear growth in the calculus of variations. I, II. Comment. Math. Univ. Carolin. 20 (1979), no. 1, 143–156, 157–172.
\bibitem{Giusti} Giusti, E.: Direct methods in the calculus of variations. World Scientific Publishing Co., Inc., River Edge, NJ, 2003. viii+403 pp.
\bibitem{KiKr} Kirchheim, B.; Kristensen, J.: On rank one convex functions that are homogeneous of degree one. Arch. Ration. Mech. Anal. 221 (2016), no. 1, 527--558.
\bibitem{Mingione} Mingione, G.: Regularity of minima: an invitation to the dark side of the calculus of variations. (English summary) Appl. Math. 51 (2006), no. 4, 355--426.
\bibitem{Ornstein} Ornstein, D.: A non--equality for differential operators in the 
$\lebe^{1}$--norm. Arch. Rational Mech. Anal. 11 1962 40--49.
\bibitem{Reshetnyak}  Reshetnyak, Yu. G.: The weak convergence of completely additive vector-valued set functions. (Russian) Sibirsk. Mat. Ž. 9 1968, 1386--1394.
\bibitem{Schmidt} Schmidt, T.: Partial regularity for degenerate variational problems and image restoration models in BV. Indiana Univ. Math. J. 63 (2014), no. 1, 213--279.
\bibitem{ST} Strang, G., Temam, R.: Functions of bounded deformation. Arch. Rational Mech. Anal. 75 (1980/81), no. 1, 7–21.
\end{thebibliography}
\end{document}